\documentclass[11pt]{amsart}
\usepackage{amsmath,amsthm,amsfonts,amscd,amssymb,eucal,latexsym,mathrsfs,stmaryrd,enumitem,bm}
\usepackage{hyperref}
\usepackage[all]{xy}
\usepackage{mathtools}
\usepackage{enumitem}  
\usepackage{bbm}
\usepackage{comment}

\setlength{\textwidth}{16cm}
\setlength{\oddsidemargin}{3mm}
\setlength{\evensidemargin}{3mm}

\newtheorem{theorem}{Theorem}[section]
\newtheorem{corollary}[theorem]{Corollary}
\newtheorem{lemma}[theorem]{Lemma}
\newtheorem{proposition}[theorem]{Proposition}

\theoremstyle{definition}
\newtheorem{definition}[theorem]{Definition}
\newtheorem{remark}[theorem]{Remark}

\newtheorem{problem}[theorem]{Problem}

\theoremstyle{plain}
\newcounter{theoremintro}
\newtheorem{theoremi}[theoremintro]{Theorem}
\newtheorem{corollaryi}[theoremintro]{Corollary}
\newtheorem{problemi}[theoremintro]{Problem}

\newcommand{\mdim}{{\rm mdim}}

\newcommand{\cW}{{\mathcal W}}

\newcommand{\cC}{{\mathcal C}}
\newcommand{\cU}{{\mathcal U}}

\newcommand{\cS}{{\mathcal S}}
\newcommand{\cZ}{{\mathcal Z}}
\newcommand{\cF}{{\mathcal F}}
\newcommand{\cV}{{\mathcal V}}

\newcommand{\sC}{{\mathscr C}}

\newcommand{\Zb}{{\mathbb Z}}

\newcommand{\Rb}{{\mathbb R}}
\newcommand{\Nb}{{\mathbb N}}

\newcommand{\eps}{\varepsilon}

\newcommand{\abs}[1]{\left|#1\right|}
\newcommand{\brak}[1]{\left(#1\right)}
\newcommand{\sqbrak}[1]{\left[#1\right]}
\newcommand{\GonX}{G \curvearrowright X}

\newcommand{\act}{\curvearrowright}
\newcommand{\lD}{\underline{D}}
\newcommand{\uD}{\overline{D}}
\newcommand{\finsub}{\subset\!\subset}
\newcommand{\ord}{\mathrm{ord}}
\newcommand{\norm}[1]{\left\|#1\right\|}

\numberwithin{equation}{section}

\renewcommand{\phi}{\varphi}

\DeclareMathOperator{\supp}{supp}
\DeclareMathOperator{\diam}{diam}

\allowdisplaybreaks

\begin{document}

\title{URP, comparison, mean dimension, and sharp shift embeddability}

\author{Petr Naryshkin}
\address{Petr Naryshkin,
Alfréd Rényi Institute of Mathematics, Budapest, Reáltanoda utca 13-15, 1053, Hungary}
\email{pnaryshkin@renyi.hu}

\begin{abstract}
For a free action $\GonX$ of an amenable group on a compact metrizable space, we study the Uniform Rokhlin Property (URP) and the conjunction of Uniform Rokhlin Property and comparison (URPC). We give several equivalent formulations of the latter and show that it passes to extensions. We introduce technical conditions called property FCSB and property FCSB in measure, both of which reduce to the marker property if $G$ is abelian. Our first main result is that for any amenable group $G$ property FCSB in measure is equivalent to URP, and for a large class of amenable groups property FCSB is equivalent to URPC. In the latter case it follows that if the action is moreover minimal then the C$^*$-crossed product $C(X) \rtimes G$ has stable rank one, satisfies the Toms-Winter conjecture, and is classifiable if $\mdim(\GonX) = 0$.

Our second main result is that if a system $\GonX$ has URPC and $\mdim(\GonX) < M/2$, then it is embeddable into the $M$-cubical shift $\brak{[0, 1]^M}^G$. Combined with the first main result, we recover the Gutman-Qiao-Tsukamoto sharp shift embeddability theorem as a special case. Notably, the proof avoids the use of either Euclidean geometry or signal analysis and directly extends the theorem to all abelian groups.

Finally, we show that if $G$ is a nonamenable group that contains a free subgroup on two generators and $\GonX$ is a topologically amenable action, then it is embeddable into $[0, 1]^G$.
\end{abstract}

\date{\today}

\maketitle

\tableofcontents

\section{Introduction}

Let $\GonX$ be a free action by homeomorphisms of an infinitely countable discrete amenable group on a compact metrizable space. An invariant called \emph{mean dimension} (denoted $\mdim(\GonX)$) for such a system was introduced by Gromov \cite{Gro99} and studied by Lindenstrauss and Weiss \cite{LinWei00}\footnote{in the case of $\Zb$. For general amenable groups see \cite{CooKri05}.}. From the onset \cite{Lin99}, the following question of \emph{shift embeddability} became one of the central problems in the field.

\begin{problem}\label{probi: main}
Let $\GonX$ be a topological dynamical system as above and let $M \in \Nb$ be a natural number. Does there exist a $G$-equivariant embedding of $X$ into the space $\brak{[0,1]^M}^G$, where $G$ acts by left shifts?
\end{problem}

The Bernoulli shift $G \act \brak{[0,1]^M}^G$ (from now on referred to as the $M$-cubical shift) has mean dimension equal to $M$. As $G$-equivariant embeddings cannot lower mean dimension, this immediately gives a negative answer for any system $\GonX$ with $\mdim(\GonX) > M$. After further study, it turned out that $M/2$ is the dividing line to the question above in the following sense. For any amenable group $G$ there exists an action $\GonX$ with $\mdim(\GonX) = M/2$ that is not embeddable into the $M$-cubical shift \cite{LinTsu14}, \cite{JinParQia22}. On the other hand, following the breakthrough papers \cite{GutLinTsu16} and \cite{GutTsu20}, Gutman, Qiao, and Tsukamoto \cite{GutQiaTsu19} obtained the following positive result.

\begin{theorem}\label{thmi: GQT}
Let $\Zb^d \act X$ be a free action with the marker property (Definition \ref{def: marker prop}) such that
\[
\mdim(\Zb^d \act X) < M/2.
\]
Then it is embeddable into the $M$-cubical shift.
\end{theorem}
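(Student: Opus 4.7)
The plan is to deduce Theorem \ref{thmi: GQT} from the two principal results announced in the abstract rather than to reconstruct the original Gutman-Qiao-Tsukamoto argument. Since $\Zb^d$ is abelian, property FCSB for a $\Zb^d$-action reduces to the marker property, and $\Zb^d$ belongs to the large class of amenable groups for which FCSB is equivalent to URPC. Hence the marker property hypothesis is equivalent to URPC, and the statement becomes a direct specialization of the paper's second main result: URPC together with $\mdim(\GonX) < M/2$ forces embeddability into $\brak{[0,1]^M}^G$. Thus the real work is to prove that second result; the rest is an equivalence chain.

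For the embedding theorem itself, my approach would be as follows. Fix a compatible metric on $X$ and a tolerance $\eps > 0$. Use URP to cover $X$, up to a set $E$ that is small in a dynamical sense, by disjoint open Rokhlin towers $\{sB_i : s \in F_i\}$ with large F\o lner shapes $F_i \finsub \Zb^d$ and bases $B_i$; comparison then dominates $E$ by a small sub-tower, so its contribution can be absorbed without destroying injectivity. A $\Zb^d$-equivariant map $X \to \brak{[0,1]^M}^{\Zb^d}$ restricted to a single tower is determined by one map $\phi_i : B_i \to \brak{[0,1]^M}^{F_i}$, and the hypothesis $\mdim(\GonX) < M/2$ translates, via the tower decomposition, to the estimate that the topological dimension of $B_i$ is strictly less than $M|F_i|/2$. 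A Menger-N\"obeling style general-position argument then yields a $\phi_i$ whose equivariant extension to the tower is an $\eps$-injection; gluing across towers and iterating with $\eps_n \to 0$ produces a Cauchy sequence of equivariant maps whose limit, after a Baire-category / Mycielski perturbation inside the space of equivariant maps, is the required embedding.

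The main obstacle is the compatibility between successive Rokhlin decompositions as $\eps_n \to 0$: a point that sits deep inside a tower at stage $n$ may land near the boundary of a completely different tower at stage $n+1$, so errors do not automatically telescope. This is exactly where the comparison half of URPC is indispensable -- it lets the ``bad'' set of one stage be dominated by a small portion of the next stage, so that the iteration converges. The sharp constant $M/2$ is dictated by the classical general-position doubling: a map from a space of covering dimension $<k/2$ into $[0,1]^k$ can generically be made injective, and after dividing by $|F_i|$ this is what the mean-dimension assumption provides on each tower base.
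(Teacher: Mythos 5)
Your reduction of Theorem \ref{thmi: GQT} is exactly the paper's: for $\Zb^d$ (abelian, hence FC) the marker property gives property FCSB (Proposition \ref{prop: FC marker has FCSB}), $\Zb^d$ lies in the class $\cS$ (it has polynomial growth, hence a local tiling algorithm), so FCSB upgrades to URPC (Theorem \ref{thmi: FCSB URPC}), and then Theorem \ref{thmi: shift embed} applies. That chain is correct and is precisely how the paper recovers Gutman--Qiao--Tsukamoto.

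Your sketch of the embedding theorem itself, however, has two genuine problems. First, the assertion that $\mdim(\GonX) < M/2$ translates into ``the topological dimension of $B_i$ is strictly less than $M\abs{F_i}/2$'' is false: the tower bases can be infinite-dimensional (the whole point is that $X$ need not be finite-dimensional). What mean dimension actually controls is the order of a refinement of the join $\bigvee_{g \in S_i} g^{-1}\cW$ of a fixed small-diameter cover $\cW$ over the F{\o}lner shape, restricted to the base; the general-position/perturbation step (Lemma \ref{lem: function perturbation}) is run against that cover, not against $\dim(B_i)$. Second, and more seriously, your sketch never explains how two points lying in \emph{different} towers, or a point in a tower versus a point in the remainder, get separated --- ``gluing across towers'' does not do this by itself. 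The paper's device here is the encoding mechanism: a single function $\phi$ supported on a small marker set $U$ such that $I_\phi$ encodes an entire sequence of URPC covers (Lemmas \ref{lem: encoding URPC 2-dim} and \ref{lem: encoding URPC 1-dim}), and the restriction of the perturbation to the closed subspace $\sC$ of functions agreeing with $-\phi$ on $U$, so that every candidate $f \in \sC$ automatically encodes which element of the URPC cover a point belongs to. Relatedly, the ``main obstacle'' you identify --- compatibility of Rokhlin decompositions across scales $\eps_n \to 0$ --- is not an obstacle in the paper's argument: the proof is a Baire category argument in $\sC$ in which each $\eps$ is handled independently by showing the $\eps$-embeddings are open and dense; no telescoping of errors between stages is needed, and comparison is used only within a single stage to dominate the remainder of the castle by a small family of sub-levels.
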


The marker property was first developed by Gutman in \cite{Gut15} and \cite{Gut17}. Although it is a fairly mild assumption (for instance, it is automatic for any minimal action), there are free actions (even of $\Zb$) that do not possess it \cite{TsuTsuYos22}, \cite{Shi23}. As far as we know, Problem \ref{probi: main} is completely open for these examples. The proof of Theorem \ref{thmi: GQT} involves three main arguments that can be roughly described as follows:
\begin{enumerate}
    \item use the marker property to construct a dynamical tiling of the space such that most points (in the sense of Banach density, see Definition \ref{def: Banach density}) lie in some tile,
    \item show that the set of points that are not in the tiles is small not only in density but, in fact, in a certain (stronger) combinatorial sense, and
    \item encode a sequence of the data constructed in items (i) and (ii) using a small amount of information.
\end{enumerate}
Importantly, arguments (i) and (iii) above use the Eucledean geometry of $\Zb^d \subset \Rb^d$ and signal analysis on $\Zb^d$ respectively and therefore do not extend to action of more general amenable groups. One other feature of the results above is that they do not develop much abstract theory. Indeed, the constructions are all explicit and keep being used throughout the proof. We additionally remark that in the special case of minimal $\Zb$-actions, a new approach has been recently found by Levin \cite{Lev23}.

Another topic that has connections to mean dimension and where somewhat different dynamical tilings appear is the study of crossed product C$^*$-algebras $C(X) \rtimes G$ associated to topological dynamical systems $\GonX$ as above. After several decades of work (see \cite{TomWin13}, \cite{Ker20}, and \cite{EllNiu17} for the recent landmark papers), a number of important questions about the crossed product were reduced to a few key properties of the action $\GonX$. We focus on two such conditions: the \emph{Uniform Rokhlin Property (URP)}, introduced by Niu, which asks for existence of an Ornstein-Weiss-type castle that is large in Banach density (see Definition \ref{def: URP}), and \emph{comparison}, which allows one to upgrade the inequality between two sets in invariant measures to a combinatorial subequivalence (see Definition \ref{def: comparison}). To shorten the notation, we say that an action has \emph{URPC} if it has both URP and comparison. We summarize the known results (\cite{Niu22}, \cite{Niu21}, \cite{LiNiu20}) involving these properties.

\begin{theorem}\label{thmi: crossed prod prop}
Let $\GonX$ be a free action of an amenable group on a compact metrizable space.
\begin{itemize}
    \item It has the small boundary property if and only if it has URP and $\mdim(\GonX) = 0$.
    \item If it is minimal, has URPC, and $\mdim(\GonX) = 0$ then the crossed product $C(X) \rtimes G$ is $\cZ$-stable.
    \item  If it is minimal and has URPC then the radius of comparison of the crossed product satisfies $\mathrm{rc}(C(X) \rtimes G) \le \mdim(\GonX)/2$.
    \item If it is minimal and has URPC the the crossed product has stable rank one.
\end{itemize}
\end{theorem}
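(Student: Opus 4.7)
The plan would be to treat each of the four bullets separately, since each involves a different C*-algebraic invariant and has an independent proof in \cite{Niu22}, \cite{Niu21}, or \cite{LiNiu20}. The four statements split cleanly into two groups: the small-boundary equivalence of the first bullet, which is purely dynamical, and the three C*-algebraic consequences, which all use a common framework built from URP and comparison.

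For the first bullet, the idea is to pass between two kinds of decomposition of $X$: Ornstein--Weiss castles with Banach-dense union (URP) and bases of open sets with boundaries that are null for every invariant probability measure (the small boundary property, SBP). In one direction, refining the levels of a URP castle using the hypothesis $\mdim(\GonX) = 0$ produces a basis of open sets with measure-theoretically small boundary, which gives SBP. In the reverse direction, an SBP basis supports an Ornstein--Weiss quasi-tiling argument that produces a URP castle, and SBP itself forces $\mdim(\GonX) = 0$ by a direct computation with the usual definition of mean dimension through open covers. My plan would follow this route with the Ornstein--Weiss quasi-tiling theorem as the main dynamical input.

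For the remaining three bullets, the strategy is to combine URP (a large dynamical castle) with comparison (matching sets in invariant measure by combinatorial subequivalence) to construct order-zero maps from matrix algebras into $C(X) \rtimes G$ whose defect in the Cuntz semigroup encodes the leftover of the castle. When $\mdim(\GonX) = 0$, this defect can be made arbitrarily small, which yields $\cZ$-stability; in general, the defect is controlled by mean dimension, yielding the quantitative bound $\mathrm{rc}(C(X) \rtimes G) \le \mdim(\GonX)/2$; and the resulting Cuntz-semigroup regularity then implies stable rank one by the arguments of \cite{Niu21}. The principal obstacle across these three bullets is the quantitative accounting of how the combinatorial defect translates into dimensional cost in the Cuntz semigroup, and since each of the three implications is already carried out in detail in the cited sources, the proof in the paper would take the form of precise references to \cite{Niu22}, \cite{Niu21}, \cite{LiNiu20} rather than a full derivation.
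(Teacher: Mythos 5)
This theorem is presented in the paper purely as a summary of known results, with no proof given beyond the citations to \cite{Niu22}, \cite{Niu21}, and \cite{LiNiu20}, which is exactly what your final sentence anticipates. Your sketches of the underlying arguments are consistent with those sources, so your proposal matches the paper's (non-)proof in approach and substance.
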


It is confirmed (see, for example, \cite{Nar24}) that URPC holds for actions of many amenable groups when the covering dimension of $X$ is finite (note that URP is automatic in this case \cite{KerSza20}). On the other hand, not much has been known in the general situation, besides for the Theorem \ref{thmi: GLT} below. 

There has been some attempts to apply the theory developed for the study of crossed products to the shift embeddability problem (see, for instance, \cite{WeiHe23}). However, these results contain neither of the three crucial arguments (i)--(iii) in the proof of Theorem \ref{thmi: GQT}: the action is assumed to be an extension of one on a finite-dimensional space (which makes constructing the tiling much easier); instead of having a small remainder set, the towers are allowed to overlap and have to cover $X$ (which increases the dimension of the cube by a multiplicative constant); and instead of encoding the tiling construction, one simply encodes the whole finite-dimensional quotient space (which increases the dimension of the cube by an additive constant). In particular, outside of certain specific circumstances \cite{LanSza23}, one never achieves the sharp bound $M/2$.

In the present paper, we first show (Theorem \ref{thmi: shift embed}) that the abstract tiling theory can, in fact, be applied to the shift embeddability problem if one proceeds in a more careful manner. We then find a common generalization (Theorem \ref{thmi: FCSB URPC}) for the tiling techniques developed in the crossed product setting (which worked for a large class of amenable groups but only on the finite-dimensional spaces) and in the shift embeddability setting (which worked only for $\Zb^d$ but on more general spaces). In other words, we completely unify these two problems in a common framework. To the best of our knowledge, this gives the most general results in either direction. 

The open sets of $X$ generate the \emph{type semigroup} equipped with the preorder given by dynamical subequivalence (Definition \ref{def: type semigroup}). We begin by setting up a machinery of calculations in the type semigroup. Most importantly, we show that the marker sets given by the marker property can be regarded as arbitrary small positive elements (Lemma \ref{Lem: mult L-free subeq}). This simple observation is the cornerstone of all our techniques going forward. We then study properties URP and URPC in depth and prove several equivalent formulations for the latter (Theorem \ref{thm: URP+comp equiv}). Our first main result is the sharp shift embeddability theorem under the assumption of URPC. We remark that this is immediately applicable to extensions (Corollary \ref{cor: URPC passes to ext}) of many finite-dimensional systems and, in contrast with the results mentioned above, gives the statement with the optimal constant $M/2$.

\begin{theoremi}\label{thmi: shift embed}
Let $\GonX$ be a free action of an amenable group on a compact metrizable space. If it has URPC and 
\[
\mdim(\GonX) < M/2
\]
then there exists a $G$-equivariant embedding of $X$ into $\brak{[0, 1]^M}^G$.
\end{theoremi}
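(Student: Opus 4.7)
The plan is to establish embeddability via a Baire category argument. Identifying $G$-equivariant continuous maps $X \to \brak{[0,1]^M}^G$ with their identity-coordinate, we work in the Polish space $C(X, [0,1]^M)$ under the uniform metric. For a compatible metric $\rho$ on $X$ and $\eps > 0$, let $\cD_\eps$ denote those $\phi \in C(X, [0,1]^M)$ whose induced equivariant map $\hat\phi$, given by $\hat\phi(x)(g) = \phi(g^{-1}x)$, separates every pair with $\rho(x,y) \ge \eps$. Each $\cD_\eps$ is open, so an embedding will be extracted from any element of $\bigcap_{n \ge 1} \cD_{1/n}$, and the task reduces to showing $\cD_\eps$ is dense for each $\eps > 0$: given $\phi_0$ and a tolerance $\delta > 0$, I must produce $\phi \in \cD_\eps$ with $\norm{\phi - \phi_0} < \delta$.

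Fix $\eta > 0$ with $\mdim(\GonX) + 2\eta < M/2$. From the definition of mean dimension I select a Følner set $F$ and an open cover $\cU$ of $X$ of $\rho$-diameter at most $\eps$ whose $F$-refinement has order strictly less than $(M/2 - 2\eta)|F|$. URP then produces a castle $\{(S_i, B_i)\}_{i=1}^k$ whose shapes are sufficiently $(F, \eta)$-invariant and whose tower set $T = \bigsqcup_i S_i B_i$ has Banach density at least $1 - \eta$. The invariance propagates the dimensional bound, so that the cover of $B_i$ obtained by refining $\cU|_{B_i}$ along $S_i$ has order strictly below $(M/2 - \eta)|S_i|$ and small $S_i$-Bowen diameter.

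On each base $B_i$ a Menger-N\"obeling argument applied to the nerve of a suitable open refinement produces a continuous map $\psi_i \colon B_i \to [0,1]^{M|S_i|}$ that is injective on pairs separated by at least $\eps$ in the $S_i$-Bowen metric, while leaving an $\eta|S_i|$-dimensional block of coordinates freely prescribable. Through the isomorphism $[0,1]^{M|S_i|} \cong \brak{[0,1]^M}^{S_i}$ and equivariant propagation along $S_i$, this yields an $\eps$-injective equivariant map on the tower $S_i B_i \to \brak{[0,1]^M}^G$. The sharp threshold $M/2$ enters precisely here: the classical Menger-N\"obeling embedding dimension $2\dim + 1$ is halved per tower position as $|S_i| \to \infty$, matching the per-position budget of $M$ coordinates against mean dimension $<M/2$.

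The essential remaining problem is to incorporate the remainder $R = X \setminus T$ without destroying injectivity or continuity. URP alone controls $R$ only in Banach density, which is too weak to support a continuous equivariant perturbation. URPC upgrades this to a dynamical subequivalence: $R$ is subequivalent to an arbitrarily small subset $R' \subset \bigsqcup_i B_i$, and by the reformulations collected in Theorem~\ref{thm: URP+comp equiv} the witnessing transport can be realized by a finite collection of open sets with small boundary adapted to the castle. I use this transport to encode the map data on $R$ into the free $\eta|S_i|$-dimensional coordinate blocks reserved on each tower, and glue with $\phi_0$ via a $G$-equivariant partition of unity subordinate to an open enlargement of the castle. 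The main obstacle, and the step that most demands the full strength of URPC, is making this transport-and-gluing step simultaneously continuous, equivariant, a small perturbation of $\phi_0$, and injectivity-preserving on $T$; the openness of the subequivalence witness, its small-boundary control, and the reserved dimensional slack are needed in concert to verify that the assembled $\phi$ lies in $\cD_\eps$ within distance $\delta$ of $\phi_0$.
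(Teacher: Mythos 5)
Your overall framework (Baire category, castle from URP, per-base general-position/N\"obeling-type perturbation with the $M/2$ budget, remainder controlled via comparison) matches the paper's strategy, but the proposal has a genuine gap: the two steps that actually make the argument work are deferred rather than carried out. First, even granting a perfect $\eps$-injective equivariant map on each tower, you never explain how $I_{\phi}(x)$ determines \emph{which} element of the castle structure (which tower level $gV_i$, or which remainder piece) the point $x$ lies in. Without that, per-tower injectivity does not assemble into global $\eps$-injectivity: two points at distance $>\eps$ sitting in different towers, or one in a tower and one in the remainder, need not be separated. The paper resolves this with the encoding machinery of Section \ref{ssec: encoding URPC}: a single function $\phi$ supported on a small marker set $U$ whose orbit map $I_\phi$ encodes an entire sequence of URPC covers (Lemmas \ref{lem: encoding URPC 2-dim} and \ref{lem: encoding URPC 1-dim}), and the perturbation is then carried out inside the restricted class $\sC$ of functions that agree with $-\phi$ on $U$, so the encoding survives every perturbation. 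Your ``$G$-equivariant partition of unity subordinate to an enlargement of the castle'' does not supply this; nothing in your construction lets the image distinguish distinct castle elements.

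Second, your treatment of the remainder is exactly the step you flag as ``the main obstacle,'' and it is left unresolved. The mechanism you propose (reserving an $\eta\abs{S_i}$-dimensional block of free coordinates and ``encoding the map data on $R$'' into it) is not how the sharp bound is achieved and would itself require a nontrivial continuous, injectivity-preserving encoding that you do not construct. The paper's route is different and concrete: the URPC witness transports an open cover $\{U_j\}$ of the remainder into the bases via elements $g_j$, with at most $\lfloor\alpha\abs{S_i}\rfloor$ disjoint families landing in each $V_i$; one then adds the transported restricted covers $\cV_i=\bigsqcup_{j\in J_i}g_j\brak{\left.\cW\right|_{U_j}}$ to the covers $\cU_i$ on the bases and checks $\ord(\cU_i)+\ord(\cV_i)<(1-\gamma)M\abs{S_i}/2$, so the single general-position perturbation (Lemma \ref{lem: function perturbation}) separates remainder points for free. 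No coordinates are reserved; the slack is absorbed into the order count. One further small but necessary point you omit: points near the marker set $U$ have their coordinates pinned to $-\phi$ and cannot be perturbed, so one must verify that for any two points the set of usable coordinates $I\subset S_i$ still has $\abs{I}>(1-\gamma)\abs{S_i}$; this is why the paper arranges $\abs{Lx\cap\overline{U}}\le 2$ and $\abs{S_ix\cap U^\tau}<\gamma\abs{S_i}/2$. As written, your argument establishes openness of the good sets and the shape of the density argument, but not density itself.
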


When compared to the proof of Theorem \ref{thmi: GQT}, URP and comparison play the role of the dynamical tiling constructed in item (i) and the argument for the remainder in item (ii) respectively. For item (iii), although we follow the general philosophy of encoding using a small amount of information, the specific approach needs to be modified. Indeed, Gutman, Qiao, and Tsukamoto encode the tilings \emph{spectrally} using the Fourier transform, which is only possible on $\Zb^d$. In contrast, we do it \emph{spatially} in a more straightforward manner. The fact that a witness to property URPC can be reconstructed from a small amount of data is made precise with the notion of \emph{encoding element} in the type semigroup (see Definition \ref{def: URPC witness} and Corollary \ref{cor: encoding element recovers URPC}). We then show that for any marker set in $X$ one can find a function supported on it that encodes a whole sequence of witnesses to URPC (Lemma \ref{lem: encoding URPC 2-dim}). A slight adjustment to the usual shift embeddability argument then allows one to obtain the result above (Theorem \ref{thm: embedding}).

We remark that as there are no restrictions on the structure of the group, the core argument is not limited to actions of amenable groups. In \cite{GarGefKraNar23} it was shown that \emph{topologically amenable} (Definition \ref{def: topologically amenable}) actions of nonamenable groups often behave in highly paradoxical ways. Combined with the encoding technique described above, we can prove the following (Theorem \ref{thm: embedding nonamen}).

\begin{theoremi}\label{thmi: shift embed nonamen}
Let $G$ be a nonamenable group containing a free subgroup on two generators. If $\GonX$ is a topologically amenable action then there exists a $G$-equivariant embedding of $X$ into $[0, 1]^G$.
\end{theoremi}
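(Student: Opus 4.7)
The plan is to follow the general scheme behind Theorem \ref{thmi: shift embed}, but to replace the role of URPC with the paradoxical behavior available when $G$ is nonamenable, contains a free subgroup on two generators, and acts topologically amenably. Heuristically, such an action is ``infinitely subequivalent'' in the type semigroup, so no mean-dimension constraint needs to be respected and the target cube may be taken to be $[0, 1]^G$ itself.

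The first step is to extract from \cite{GarGefKraNar23}, using the $F_2$-subgroup hypothesis, a strong paradoxicality statement at the level of the type semigroup of Definition \ref{def: type semigroup}: every nonempty open subset of $X$ is dynamically subequivalent to every other nonempty open subset. This gives an abundant supply of ``encoding elements'' in the spirit of Definition \ref{def: URPC witness}, namely arbitrarily small positive elements capable of absorbing arbitrarily rich combinatorial data; paradoxicality also ensures the absence of $G$-invariant probability measures on $X$, so there is no measure-theoretic obstruction to the embedding.

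The second step adapts the spatial encoding technique of Lemma \ref{lem: encoding URPC 2-dim}. Fix a countable basis $\{U_n\}$ of $X$ together with a countable family of separating data. Using paradoxical subequivalence, one packs disjoint translates of carefully chosen compact sets into a single nonempty open set $U \subset X$, producing a continuous function $f : X \to [0, 1]$ supported in $U$ whose $G$-orbit $\{f \circ g^{-1} : g \in G\}$ separates the points of $X$. The $G$-equivariant map $\Phi : X \to [0,1]^G$ defined by $\Phi(x)(g) = f(g^{-1} x)$ is then the desired embedding, with injectivity following from the fact that distinct points are distinguished by some packet inside the support of some translate of $f$.

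The main obstacle is to carry out the encoding so that the $G$-orbit of a single scalar function $f$ retains enough information to distinguish points; the amenable proof of Theorem \ref{thmi: shift embed} uses an $M$-tuple of functions to accommodate the mean-dimension constraint, and the reduction to a single function relies decisively on paradoxicality. The free subgroup on two generators enters precisely here: one exploits its freeness to schedule disjoint translates of the encoded data inside $U$ without accidental collisions, something that no amenable group can provide. Once these packets are scheduled consistently, the rest of the argument is a straightforward rerun of the injectivity verification in the proof of Theorem \ref{thm: embedding}, yielding Theorem \ref{thm: embedding nonamen}.
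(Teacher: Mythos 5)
Your high-level strategy (replace URPC by the paradoxical behavior of topologically amenable actions of groups containing $F_2$, encode data on a small set, rerun the embedding argument) matches the paper's, but two essential ingredients are missing or misidentified. First, the input you extract from \cite{GarGefKraNar23} is too weak. Mutual subequivalence of all nonempty open sets does not by itself produce the structure the embedding argument needs, namely a \emph{castle-like decomposition}: Theorem \ref{thm: GGKN} gives, for every finite $D \subset F_2$, three open sets $V_1, V_2, V_3$ that are $D$-free (with disjoint closures of all $D$-translates) and elements $g_i$ with $g_1V_1 \cup g_2V_2 \cup g_3V_3 = X$. These three towers with arbitrarily large free shapes are what replaces the URPC castle; over each tower one applies the perturbation Lemma \ref{lem: function perturbation} to the map $x \mapsto (f^+(gx))_{g \in D}$ on $V_i^\tau$, and the reason $M = 1$ suffices is quantitative: the relevant cover $\cU_i = g_i^{-1}(\left.\cW\right|_{g_iV_i})$ has order at most $\ord(\cW)$, a constant, while $\abs{D}$ can be taken larger than $10\,\ord(\cW)$, so the ``mean dimension'' constraint of the lemma is vacuous. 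Your proposal never produces these towers, and without them there is no place to apply the perturbation lemma.

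Second, your plan to explicitly build a single function $f$ supported on a small open set $U$ whose orbit separates \emph{all} points of $X$ is not how the argument can go, and you give no mechanism for it. In the paper, the function $\phi$ supported on $U$ only encodes the three-set cover $\{g_iV_i\}$ (so that $I_f$ can tell which tower a point lies in, for every $f$ in the closed set $\sC$ of functions agreeing with $-\phi$ on $U$ and nonnegative off $U$); separation of $\eps$-separated points \emph{within} a tower is obtained by a Baire category argument (Remark \ref{rem: eps embed open}) showing that $\eps$-embeddings are dense open in $\sC$, via the generic linear-independence perturbation of Lemma \ref{lem: function perturbation}. A single explicitly packed function on $U$ cannot carry the full topology of $X$, and no "packing" or "scheduling" construction is described that would make your injectivity claim checkable. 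Relatedly, your assertion that passing from an $M$-tuple to a scalar function "relies decisively on paradoxicality" is off: the paper already does this in the amenable case (Lemma \ref{lem: encoding URPC 1-dim}) by splicing two coordinates onto $U'$ and $gU'$; freeness of $F_2$ enters only through Theorem \ref{thm: GGKN} itself.
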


To the best of our knowledge, the theorem above is the first shift embeddability result for nonamenable groups. We note that topologically amenable actions cannot have any invariant probability measures, hence the sofic mean dimension satisfies $\mdim_\Sigma(\GonX) \le 0$ for any sofic approximation $\Sigma$ (\cite{Li13}).

With the importance of URPC firmly established, we now turn to characterizing the actions that possess it. The combination of \cite{GutLinTsu16} (where the authors introduced the dynamical tiling technique mentioned in item (i) of the proof of Theorem \ref{thmi: GQT}) and \cite{Niu23} (as well as Lemma \ref{lem: URP implies marker}) gives the following.

\begin{theorem}\label{thmi: GLT}
A free action $\Zb^d \act X$ has URPC if and only if it has the marker property.
\end{theorem}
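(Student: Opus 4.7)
The plan is to prove the two implications separately, drawing on known results. The forward direction is a general fact about URP valid for any amenable group, while the reverse direction relies crucially on the specific structure of $\Zb^d$.

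For $(\Rightarrow)$, comparison plays no role: URP alone already implies the marker property, which is precisely the content of Lemma \ref{lem: URP implies marker}. The idea is that, given a finite set $F \subset G$, one applies URP with shapes $S_i$ taken to be sufficiently $(F, \eps)$-invariant. Inside each shape, the $F$-interior $\{g \in S_i : Fg \subset S_i\}$ has density close to $1$, and these interiors, when translated over the basepoints of the castle, form an $F$-separated subset of $X$ whose complement has small Banach density. This directly witnesses the marker property for $F$.

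For $(\Leftarrow)$, both URP and comparison must be derived from the marker property. First, one invokes the tiling construction of Gutman-Lindenstrauss-Tsukamoto \cite{GutLinTsu16}: starting from a marker set, they build dynamical tilings of $X$ whose shapes are (essentially cubical) Følner sets in $\Zb^d$ and which cover all of $X$ up to a remainder of arbitrarily small Banach density. Such tilings are precisely castles witnessing URP, so one obtains the URP half. Second, to obtain comparison one invokes Niu \cite{Niu23}, who showed that for free $\Zb^d$-actions the existence of such convex/cubical dynamical tilings suffices to upgrade measure inequalities $\mu(A) < \mu(B)$ (for every invariant probability measure $\mu$) to combinatorial subequivalence $A \preceq B$. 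Put together, the two inputs give URPC.

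The hard part is the comparison half of $(\Leftarrow)$: Niu's argument exploits the convex-geometric structure of $\Zb^d \subset \Rb^d$ through volume estimates and rearrangement of cubical tiles, so the method does not transfer to general amenable groups in any direct way. Removing this reliance on Euclidean geometry, and replacing it by the abstract type-semigroup machinery developed in later sections of the paper, is precisely one of the main themes of what follows; the present statement can thus be viewed as the motivating special case for the more general characterizations of URPC established later.
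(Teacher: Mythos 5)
Your decomposition is exactly the paper's: the paper does not write out a proof of this theorem but obtains it by combining Lemma \ref{lem: URP implies marker} for the forward direction with \cite{GutLinTsu16} (producing the URP castles from a marker set) and \cite{Niu23} (supplying comparison) for the converse, which is precisely your plan, including the observation that only the comparison half genuinely uses the Euclidean structure of $\Zb^d$. One correction to your sketch of the forward direction: the union of the translates $gV_i$ over the whole $F$-interior $\{g \in S_i \mid Fg \subset S_i\}$ is generally \emph{not} $F$-free (if $g$ and $fg$ both lie in that interior, the union meets its own $f$-translate), and Lemma \ref{lem: URP implies marker} instead selects a single $g_i \in S_i$ with $Fg_i \subset S_i$ from each tower and takes $U = \bigsqcup_i g_iV_i$, which is $F$-free and is a marker set because $\lD(U) > 0$ --- one does not need, and does not obtain, a set whose complement has small Banach density.
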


As in turns out, there is an obstruction to extending this result to actions of general amenable group. For a general action $\GonX$, if a set $U \subset X$ is $F$-free for some $F \subset G$ (i.~e., all the translates $\{gU\}_{g \in F}$ are pairwise disjoint) and $h \in G$ is some other element, then the translate $hU$ will typically not be $F$-free, unless $G$ is abelian (or FC). The fact that this holds for $\Zb^d$ is (implicitly) crucial to the proof of Theorem \ref{thmi: GLT}. 

For a more heuristic explanation, the dynamical tiling construction of Gutman, Lindenstrauss, and Tsukamoto is fundamentally a sophisticated version of Rokhlin's first return time argument. To apply this kind of construction one needs a sufficiently small marker set, hence the marker property naturally appears. In contrast, the proof of Ornstein-Weiss' generalization of the Rokhlin lemma starts with a sufficiently fine \emph{partition}. As we attempted to work with general amenable groups, eventually it became clear that the assumption of the marker property has to be replaced with something stronger.

To this end we introduce a new condition which we call \emph{property FCSB} as well as its weaker version \emph{property FCSB in measure} (Definition \ref{def: FCSB}). Roughly speaking, it asks for an existence of a cover (see Remark \ref{rem: FCSB can ask cover}) consisting of sufficiently free sets such that the combined boundaries are small together. We show (Propositions \ref{prop: FC marker has FCSB} and \ref{prop: URPC implies FCSB}) that 
\begin{itemize}
    \item if $G$ is abelian (more generally, FC) then the marker property, property FCSB, and property FCSB in measure are equivalent,
    \item URP implies property FCSB in measure, and
    \item URPC implies property FCSB.
\end{itemize}

Our next main result is the converse to the second implication above and extend the work of Gutman, Lindenstrauss, and Tsukamoto.

\begin{theoremi}\label{thmi: FCSB in meas URP}
If a free action $\GonX$ of an amenable group has property FCSB in measure then it has URP.
\end{theoremi}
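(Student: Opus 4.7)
The plan is to start from the cover provided by property FCSB in measure and turn it into an Ornstein--Weiss castle certifying URP. The main technical issue is upgrading the resulting measure estimate on the leftover to an upper Banach density estimate.

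Fix target parameters $(F, \eps)$ at which URP is to be verified. I would first invoke property FCSB in measure with invariance finer than $(F, \eps)$ and a small tolerance $\delta > 0$ to produce $(F, \eps)$-invariant shapes $F_1, \dots, F_N \subset G$ together with sets $U_1, \dots, U_N$ covering $X$, each $U_i$ being $F_i$-free, and with the collective boundary $B = \bigcup_i \partial U_i$ satisfying $\mu(B) < \delta$ for every $G$-invariant probability measure $\mu$. I would then disjointify the towers $F_i U_i$ in the classical Ornstein--Weiss manner: after fixing a linear order on the indices, define $C_i = U_i \setminus F_i^{-1}\brak{\bigsqcup_{j<i} F_j C_j}$, so that $\{(F_i, C_i)\}$ forms a disjoint castle. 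A direct bookkeeping argument using the almost-invariance of the $F_i$'s shows that the leftover $L := X \setminus \bigsqcup_i F_i C_i$ is contained in a bounded translate of $B$, and hence $\mu(L)$ is bounded by a constant multiple of $\delta$ for every invariant $\mu$.

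The crucial step is then converting this uniform measure bound into the Banach density bound $\uden(L) < \eps$ required by URP. For this I plan to arrange the $U_i$ to be open---which I expect can be extracted from the formulation of FCSB in measure, possibly after a mild approximation that does not spoil the boundary estimate---so that $L$ is closed. One can then invoke the standard fact that for a closed set $L$ in a free action of an amenable group on a compact metric space, $\uden(L) = \sup_\mu \mu(L)$ with the supremum over invariant probability measures: any Følner sequence and basepoint witnessing the upper Banach density produce empirical averages converging weak-$\ast$ (along a subsequence) to an invariant measure $\mu$, and upper semicontinuity of $\mu \mapsto \mu(L)$ on closed sets yields the bound. The uniform measure estimate then directly gives that $\uden(L)$ is small, completing the verification of URP upon choosing $\delta$ sufficiently small relative to $\eps$.

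I expect the main obstacle to be not the castle construction itself but this final measure-to-Banach-density passage, specifically the need to keep the $U_i$ regular enough that $L$ is closed, or at least has closure of comparable measure in every invariant $\mu$. This is typically handled by choosing the open cover in the FCSB machinery so that passing to closures adds only a negligible amount to $B$; the Ornstein--Weiss disjointification itself is routine once the cover and boundary control are in place.
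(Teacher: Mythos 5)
The final step of your argument --- passing from a uniform bound on $\mu(L)$ over all invariant measures to a bound on $\uD(L)$ for the closed leftover $L$ --- is fine and matches the fact recorded in the paper's preliminaries. The fatal problem is earlier, in the castle construction. A single round of greedy disjointification $C_i = U_i \setminus F^{-1}\brak{\bigsqcup_{j<i}F_jC_j}$ does \emph{not} produce a leftover contained in a bounded translate of $B = \bigcup_i \partial U_i$. The leftover has two sources: points near the boundaries (controlled by $B$), and points that are ``orphaned'' because every prospective tile through them was blocked by previously placed tiles. The second source is not controlled by $B$ at all. Concretely, take $G = \Zb$, $F = \{0,\dots,k-1\}$, and a clopen $F$-free cover (so $B = \emptyset$) in which $U_1$ meets each orbit in a copy of $\tfrac{3k}{2}\Zb$: after placing the $U_1$-tower, the uncovered gaps have length $k/2 < \abs{F}$, no further tile fits, and the leftover has density $1/3$ no matter how large $k$ is. This is precisely why the Ornstein--Weiss argument requires a nested sequence of F{\o}lner sets $F_1 \subset \dots \subset F_n$ of very different scales and $n$ rounds with $(1-\eps)^n < \eps$, so that the gaps left by large tiles are filled by much smaller ones. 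Relatedly, you have misread the hypothesis: property FCSB in measure supplies a collection of sets that are all free with respect to one and the same finite set $F$, not sets $U_i$ that are $F_i$-free for individually chosen shapes $F_i$.

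Once the construction is run as a genuine multi-round, multi-scale algorithm, a second issue you have not addressed becomes essential. The error accumulated over the rounds involves translates of $B$ by sets as large as $F^{nm+1}$, where $m$ is the number of sets in the FCSB collection; if $m$ is allowed to grow as the boundary tolerance $\delta$ shrinks (which the bare definition permits), one cannot choose $\delta$ small enough in advance to absorb this. The paper resolves this with the cover reduction theorem (Theorem \ref{thm: alphabet reduction}), which shows the collection can be taken to have at most $3^{\abs{F}}$ members \emph{independently} of $\delta$; this decoupling is the key new input, and your proposal contains no substitute for it.
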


Next, let $\cS$ be the smallest class of \emph{infinite} amenable groups that satisfies the following conditions.
\begin{itemize}
    \item Any group of locally subexponential growth is in $\cS$.
    \item The class $\cS$ is closed under taking direct limits.
    \item If $H \in \cS$ and $H \lhd G$ then $G \in \cS$.
    \item If $G$ contains finite normal subgroups of arbitrarily large cardinality then $G \in \cS$.
    \item Certain amenable groups of dynamical origin belong to $\cS$ (see \cite{NarPet24} for a precise description).
\end{itemize}
We remark that we do not know of any amenable group with a relatively explicit description that is not known to belong to $\cS$. For groups in this class we can obtain the following.

\begin{theoremi}\label{thmi: FCSB URPC}
Let $G \in \cS$. If a free action $\GonX$ has property FCSB then it has URPC.
\end{theoremi}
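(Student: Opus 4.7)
The plan is to split the proof into the two components of URPC. Property FCSB is by construction a strengthening of property FCSB in measure, so Theorem \ref{thmi: FCSB in meas URP} immediately yields URP. The entire content of the present theorem therefore reduces to establishing comparison for free actions of groups $G \in \cS$ that have property FCSB. To prove comparison, I would fix open sets $U, V \subset X$ with $\mu(U) < \mu(V)$ for every $G$-invariant probability measure $\mu$ and construct a dynamical subequivalence $U \prec V$; the FCSB witness will play the role that Ornstein--Weiss castles play in the classical setting, while the class $\cS$ will be used to recursively assemble the subequivalence group-theoretically.

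I would structure the argument as an induction on the construction of $\cS$. For the base case of groups of locally subexponential growth, the FCSB witness provides a cover by highly free sets with small combined boundary; the subexponential growth assumption forces the Følner sets involved to have arbitrarily small perimeter-to-volume ratio, which is precisely what is needed to run a matching argument on each tile separately and embed $U$ into $V$ with only boundary error. For direct limits, note that any concrete instance of dynamical subequivalence involves only finitely many elements of $G$, so any such claim can be pulled back to a subgroup already known to work. The case of arbitrarily large finite normal subgroups is handled by averaging over a finite normal subgroup large enough to absorb the combinatorial imbalance between $U$ and $V$. The remaining dynamically-defined groups in $\cS$ are handled by appealing to the construction of \cite{NarPet24}.

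The main obstacle will be the normal-subgroup step: given $H \lhd G$ with $H \in \cS$ and a free action $\GonX$ with property FCSB, one must produce a suitable FCSB-type witness for the restricted action $H \act X$, apply the induction hypothesis to obtain $H$-subequivalences, and then reassemble them into a $G$-subequivalence using a transversal for $G/H$. The delicate point is that the FCSB cover for $G$ need not directly restrict to an FCSB cover for $H$: the sets in the cover are $G$-highly-free but may fail to have the required shape with respect to $H$-Følner sets, and the combined boundary that is small for $G$ may not remain small when measured for $H$. I expect this to be resolved by refining the $G$-cover using a well-chosen family of $H$-Følner sets tiled by transversals of $G/H$, and by invoking the type semigroup calculus (in the spirit of Lemma \ref{Lem: mult L-free subeq}) to translate the $H$-subequivalences coming from the induction hypothesis back to $G$-subequivalences without blowing up the error.

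Once this recursive argument has been verified for every closure clause defining $\cS$, the conclusion is that every action of a group in $\cS$ with property FCSB has comparison, and combined with URP from Theorem \ref{thmi: FCSB in meas URP} this gives URPC, as desired.
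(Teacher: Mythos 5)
Your top-level decomposition (URP from Theorem \ref{thmi: FCSB in meas URP}, then comparison separately) is legitimate in principle, but the proposed proof of comparison has a structural flaw that recurs in the direct-limit and normal-subgroup steps: you want to invoke the induction hypothesis for the restricted action $H \act X$ of a subgroup $H \in \cS$, but comparison for $H \act X$ requires the hypothesis $\nu(A) < \nu(B)$ for all \emph{$H$-invariant} measures $\nu$, and this is strictly stronger than the hypothesis you actually have (the inequality for $G$-invariant measures only). So the induction hypothesis simply cannot be applied to $H \act X$, and "pulling back the subequivalence to a subgroup already known to work" does not type-check. The paper avoids this entirely by never running an induction at the level of the dynamics: the induction over the defining clauses of $\cS$ happens purely group-theoretically, in the form of \emph{exact local tiling algorithms} (Definition \ref{def: local tiling alg}, Theorem \ref{thm: local equiv prop}, Theorem \ref{thm: local tiling extension}), and the dynamical input FCSB is used exactly once, in Theorem \ref{thm: FCSB and local alg imply URPC}, to convert a partial open coloring of $X$ into a castle via the local algorithm. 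Moreover, what is proved there is not comparison directly but item (iv) of Theorem \ref{thm: URP+comp equiv} (a castle whose remainder is subequivalent to an arbitrary marker set), from which comparison follows by the already-established equivalence; exactness of the tiling is what makes the remainder small combinatorially rather than merely in measure.

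A second gap: even on a single orbit, producing a matching between $A$ and $B$ from a counting inequality is not enough --- the matching must vary continuously (in fact, locally, as a function of a finite window of orbit data) in order to be assembled into the finite open covers and group elements that witness dynamical subequivalence. Your sketch of "a matching argument on each tile separately" does not address this, and it is precisely the point of the local-algorithm formalism; relatedly, the base case you describe as a routine perimeter-to-volume estimate is in fact the Downarowicz--Zhang local comparison algorithm, which is the deep external input. Finally, a small point: comparison is formulated for a closed set $A$ against an open set $B$, not for two open sets.
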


In particular, we get the following corollary as a special case. Combined with Theorem \ref{thmi: shift embed} this recovers the shift embeddability result of Gutman, Qiao, and Tsukamoto.

\begin{corollaryi}
Let $\GonX$ be a free action of an abelian (or FC) group. Then it has the marker property if and only if it has URPC. 
\end{corollaryi}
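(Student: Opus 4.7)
The plan is to derive the corollary as an immediate combination of three results already stated in the introduction. Proposition~\ref{prop: FC marker has FCSB} establishes that for FC groups (in particular, abelian groups) the marker property is equivalent to property FCSB. Proposition~\ref{prop: URPC implies FCSB} provides the implication URPC $\Rightarrow$ FCSB. Theorem~\ref{thmi: FCSB URPC} provides the converse FCSB $\Rightarrow$ URPC under the hypothesis that $G$ belongs to the class $\cS$. Chaining these three gives marker property $\Leftrightarrow$ FCSB $\Leftrightarrow$ URPC, so the only remaining task is to check that any infinite abelian or FC group lies in $\cS$.

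For abelian groups this is immediate from the first defining property of $\cS$, since every finitely generated abelian group has polynomial growth and hence such a group has locally subexponential growth. For a general infinite FC group I would invoke the classical theorem of B.~H.~Neumann that any finitely generated FC group has finite commutator subgroup and is therefore virtually abelian. Combined with the fact that finitely generated virtually abelian groups have polynomial growth, this shows that every FC group has locally subexponential growth and so belongs to $\cS$ by the same defining property.

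I do not foresee a significant obstacle: the conceptual work has already been done in the prior results, and the only remaining step is the membership check in $\cS$, which is handled by a single application of a standard group-theoretic theorem. I would also note that only the forward direction (marker $\Rightarrow$ URPC) actually requires $G \in \cS$; the reverse implication URPC $\Rightarrow$ marker property uses only the two propositions and hence holds with no restriction beyond the FC hypothesis.
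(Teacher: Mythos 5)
Your proposal is correct and follows essentially the same route the paper intends: the corollary is presented there as an immediate combination of Proposition~\ref{prop: FC marker has FCSB}, Theorem~\ref{thmi: FCSB URPC}, and the fact that URPC implies the marker property (via Lemma~\ref{lem: URP implies marker}, or equivalently via Proposition~\ref{prop: URPC implies FCSB} together with the definition of FCSB). Your verification that FC groups lie in $\cS$ --- subgroups of FC groups are FC, finitely generated FC groups are virtually abelian by B.~H.~Neumann's theorem, hence every FC group has locally subexponential growth --- correctly supplies the one detail the paper leaves implicit.
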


As mentioned earlier, the proof of Theorem \ref{thmi: GLT} is based on the first return time argument which fails for actions of more general groups. Consequently, we take a different approach to obtain Theorems \ref{thmi: FCSB in meas URP} and \ref{thmi: FCSB URPC}. Specifically, the proof consists of two parts. In the first one, we show that the number of open sets in the collection given by property FCSB (in measure) can be made to be \emph{independent} of the smallness of the boundaries. This is done using the \emph{cover reduction} procedures that decrease the number of elements in the collection while only increasing the error in a controlled way --- first for actions of cyclic groups (Lemma \ref{lem: alphabet reduction}) and then for actions of arbitrary ones (Theorem \ref{thm: alphabet reduction}). Once this is done, we apply the standard Ornstein-Weiss algorithm to construct a castle with F{\o}lner shapes (Theorem \ref{thm: FCSB in measure equiv URP}). The number of steps in the algorithm depends only on the number of open sets in the collection and hence, by the newly acquired independence, we may choose the boundaries tiny enough so that the accumulated error after running the algorithm is still small. This gives us Theorem \ref{thmi: FCSB in meas URP}.

The argument above is insufficient to prove Theorem \ref{thmi: FCSB URPC}. The problem does not come from the boundary error (which can be controlled in the same way) but from the fact that the classical Ornstein-Weiss algorithm produces a castle that is a priori only large \emph{in measure}. To bridge this gap we recall the notion of a \emph{local algorithm} (Definition \ref{def: local map}). This concept has long been used in measurable combinatorics, as it is not difficult to see that whenever one can solve a discrete problem using a local algorithm, one can also solve a measurable version of the problem (for example, see \cite{Ber23}). As it turns out, this notion is applicable in our context we well. Specifically, we define what it means for a group to have an \emph{exact local tiling algorithm} (Definition \ref{def: local tiling alg}) and show that for such groups property FCSB implies URPC (Theorem \ref{thm: FCSB and local alg imply URPC}). Exact tilings were extensively studied before \cite{KerSza20}, \cite{KerNar21}, \cite{Nar24}, \cite{NarPet24} (although not in the language of local algorithms) and it was (implicitly) shown that the groups from the class $\cS$ defined above have this property. This finishes the proof of Theorem \ref{thmi: FCSB URPC}.

The results above naturally bring up an important question --- if $G$ is not FC, is there indeed a genuine gap between the marker property and FCSB? We suspect that the answer to this question is positive.

\begin{problemi}
Let $G$ be an amenable group with an infinite conjugacy class. Let $\GonX$ be a free action with the marker property. Does it necessarily have property FCSB (in measure)? What if $\GonX$ is additionally assumed to be minimal?
\end{problemi}

The paper is organized as follows. In Section \ref{sec: prelim} we introduce the notation and recall some basic facts about Banach densities. In Section \ref{sec: marker prop and URPC} we develop the machinery for calculations in the type semigroup using the marker sets, observe some basic facts about URP and URPC, and give several equivalent formulations of the latter. In Section \ref{sec: FCSB} we introduce property FCSB (in measure), show how it relates to the marker property, and describe the cover reduction procedure. In Section \ref{sec: from FCSB to URPC} we develop the theory of local tiling algorithms and prove Theorems \ref{thmi: FCSB in meas URP} and \ref{thmi: FCSB URPC}. In Section \ref{sec: mdim and shift embed} we show how a sequence of URPC witnesses can be encoded on a small marker set and prove Theorem \ref{thmi: shift embed}. Finally, in Section \ref{sec: nonamen shift embed} we recall the results about topologically amenable actions of nonamenable groups and prove Theorem \ref{thmi: shift embed nonamen}.

\medskip

\noindent{\it Acknowledgements.}
The research was partially funded by the Deutsche Forschungsgemeinschaft (DFG, German Research Foundation) under Germany’s Excellence Strategy – EXC 2044 – 390685587, Mathematics Münster – Dynamics – Geometry – Structure; the Deutsche Forschungsgemeinschaft (DFG, German Research Foundation) – Project-ID 427320536 – SFB 1442, and ERC Advanced Grant 834267 - AMAREC. It was also partially funded by Dynasnet European Research Council Synergy project -- grant number ERC-2018-SYG 810115.

\section{Preliminaries}
\label{sec: prelim}

Throughout this paper, $X$ will be a compact metrizable topological space, $G$ will be an \emph{infinite} countable discrete group, and $G \act X$ will be a free (that is, all stabilizers are trivial) action by homeomoprhisms. Everywhere except Section \ref{sec: nonamen shift embed} the group $G$ is assumed to be amenable. We sometimes write $F \finsub G$ to say that $F$ is a finite subset of $G$. We will denote by $M_G(X)$ the space of $G$-invariant probability measure on $X$. When $G$ is amenable, $M_G(X)$ is nonempty.

For convenience, we fix a compatible metric $\rho$ on $X$ (note that it is not assumed to be invariant under the group action). Given a set $A \subset X$ and $\delta > 0$, we define 
\[
A^\delta = \{x \in X \mid \rho(x, A) < \delta\} \quad \mbox{and} \quad A^{-\delta} = \{x \in X \mid \rho(x, X \setminus A) > \delta\}.
\]
Note that $A^\delta$ and $A^{-\delta}$ are open for any set $A \subset X$.

Let $F \subset G$ be finite and $\eps >0$. We say that a finite set $S \subset G$ is $(F, \eps)$-invariant if 
\[
\abs{\bigcap_{g \in F}gS} > (1-\eps) \abs{S}.
\]
Recall that a group is amenable if and only if there is an $(F, \eps)$-invariant set for every $F$ and $\eps$. 

\begin{definition}\label{def: Banach density}
Let $Y$ be an arbitrary set (for our purposes, either $Y = G$ or $Y=X$) and let $G \act Y$ be a free action. Given a set $A \subset Y$ and a nonempty finite set $F \subset G$ define
\[
\lD_F(A) = \inf_{y \in Y}\frac{\abs{A \cap Fy}}{\abs{F}} \quad \mbox{and} \quad \uD_F(A) = \sup_{y \in Y}\frac{\abs{A \cap Fy}}{\abs{F}}.
\]
We refer to these quantities as \emph{lower and upper Banach densities of $A$ with respect to $F$}. We define \emph{lower and upper Banach densities of $A$} by
\[
\lD(A) = \sup_{F \finsub G}\lD_F(A) \quad \mbox{and} \quad \uD(A) = \inf_{F \finsub G} \uD_F(A).
\]
\end{definition}
If $(F_n)$ is a F{\o}lner sequence for $G$ then
\[
\lD(A) = \limsup_{n \to \infty}\lD_{F_n}(A) \quad \mbox{and} \quad \uD(A) = \liminf_{n \to \infty} \uD_{F_n}(A).
\]
Note that the upper Banach density is also known as the \emph{orbit capacity}. Suppose now that $Y = X$ is a compact space and $G$ acts on it by homeomorphisms. If $A \subset X$ is open we have 
\[
\lD(A) = \inf_{\mu \in M_G(X)} \mu(A),
\]
and if $A \subset X$ is closed we have
\[
\uD(A) = \sup_{\mu \in M_G(X)} \mu (A).
\]

For arbitrary subsets $S \subset G$ and $V \subset X$ we define
\[
SV = \bigcup_{g \in S}gV.
\]

If $\{\cU_i\}_{i=1}^n$ are collections of open sets, we define the join
\[
\bigvee_{i=1}^n \cU_i = \left\{U_1 \cap U_2 \cap \ldots \cap U_n \mid U_i \in \cU_i\right\}.
\]
If $\cU$ is a collection of open sets in $X$ and $Y \subset X$, we define the restriction
\[
\left.\cU\right|_{Y} = \{U \cap Y \mid U \in \cU\}.
\]
If $g \in G$, we define the collection $g\cU$ to be
\[
g\cU = \{gU \mid u \in \cU\}.
\]

\section{Marker property, Uniform Rokhlin Property, and comparison}
\label{sec: marker prop and URPC}

\subsection{Dynamical subequivalence, type semigroup, and marker sets}
\begin{definition}\label{def: L-free}
Given a set $U \subset X$ and a finite set $L \subset G$ we say that $U$ is \emph{$L$-free} if all the translates $\{gU\}_{g \in L}$ are pairwise disjoint.
\end{definition}

\begin{definition}\label{def: marker prop}
We say that an open set $U \subset X$ is a \emph{marker set} if $GU = X$. The action $G \act X$ has the \emph{marker property} if for every finite subset $L \subset G$ there exists an $L$-free marker set.
\end{definition}

\begin{remark}
By compactness, if $U$ is a marker set then there exists some finite subset $M \subset G$ with $MU = X$. In particular, that implies that 
\[
\lD(U) \ge 1/\abs{M}.
\]

Conversely, for any open set $U$, the set $X \setminus GU$ is a $G$-invariant compact set and thus carries a $G$-invariant probability measure. It follows that if $\lD(U) > 0$, then $U$ is a marker set.
\end{remark}

The following lemma shows that whenever we choose an $L$-free marker set $U$ we can in fact assume that $\overline{U}$ is $L$-free. The proof is elementary and is left as an exercise.

\begin{lemma}
\label{lem: MS shrink}
Let $U$ be an $L$-free marker set such that $MU = X$ for some $M \finsub G$. Then there exists $\delta > 0$ such that $U^{-\delta}$ is an $L$-free marker set with $MU^{-\delta} = X$. In particular, $\overline{U^{-\delta}} \subset U$ is $L$-free.
\end{lemma}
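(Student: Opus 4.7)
The plan is a routine compactness argument exploiting the monotonicity of the family $\{U^{-\delta}\}_{\delta > 0}$.

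First I would note the inclusion $\overline{U^{-\delta}} \subset U$ for every $\delta > 0$. Indeed, if $\rho(x, X \setminus U) \ge \delta$ then $x$ cannot belong to the closed set $X \setminus U$, hence $x \in U$; this shows that the closed set $\{x : \rho(x, X \setminus U) \ge \delta\}$, which contains $\overline{U^{-\delta}}$, sits inside $U$. Because $\{gU\}_{g \in L}$ are pairwise disjoint by the $L$-freeness of $U$, the smaller family $\{g \overline{U^{-\delta}}\}_{g \in L}$ is a fortiori pairwise disjoint. This already yields the final assertion of the lemma.

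Next, for any $x \in U$ one has $\rho(x, X \setminus U) > 0$ since $X \setminus U$ is closed, so $x \in U^{-\delta}$ for some $\delta > 0$. Hence
\[
\bigcup_{\delta > 0} U^{-\delta} = U, \qquad \text{and therefore} \qquad \bigcup_{\delta > 0} MU^{-\delta} = MU = X.
\]
The family $\{MU^{-\delta}\}_{\delta > 0}$ consists of open sets (each $mU^{-\delta}$ is the homeomorphic image of an open set, and $M$ is finite) and is monotone increasing as $\delta \downarrow 0$, because $\delta_2 \le \delta_1$ implies $U^{-\delta_1} \subset U^{-\delta_2}$.

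Finally I would apply compactness of $X$ to the open cover $\{MU^{-\delta}\}_{\delta > 0}$. A finite subcover picks out values $\delta_1, \dots, \delta_k$; setting $\delta = \min_i \delta_i > 0$ and using monotonicity, each $MU^{-\delta_i}$ is contained in $MU^{-\delta}$, so $MU^{-\delta} = X$. Since $M \subset G$, this also forces $GU^{-\delta} = X$, so $U^{-\delta}$ is a marker set, and it is $L$-free as a subset of the $L$-free set $U$. I do not anticipate any real obstacle; the only point to keep in mind is the monotonicity of $U^{-\delta}$ in $\delta$, which is immediate from the definition.
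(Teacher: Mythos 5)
Your argument is correct and complete; since the paper leaves this lemma as an exercise, there is no proof to compare against, but your compactness-plus-monotonicity argument (using $\bigcup_{\delta>0}U^{-\delta}=U$, hence $\bigcup_{\delta>0}MU^{-\delta}=X$, extracting a finite subcover, and taking the minimal $\delta$) is exactly the intended elementary proof. The auxiliary observations --- that $\overline{U^{-\delta}}$ lies in the closed set $\{x:\rho(x,X\setminus U)\ge\delta\}\subset U$ and that $L$-freeness passes to subsets --- are also handled correctly.
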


\begin{definition}
Let $A \subset X$ be closed and let $B \subset X$ be open. We say that $A$ is \emph{(dynamically) subequivalent} to $B$ and write $A \prec B$ if there exist a finite open cover $A \subset \bigcup_{j \in J}U_j$ and elements $g_j \in G, j \in J$ such that $\{g_jU_j\}_{j\in J}$ are pairwise disjoint subsets of $B$.

For an open set $A$ we say that it is subequivalent to $B$ (and write again $A \prec B$) if for every closed subset $A^\prime \subset A$ we have $A^\prime \prec B$. Note that if $A$ is open and $A \subset B$ then $A \prec B$. Moreover, we have that $gA \prec A$ and $A \prec gA$ for any $g \in G$.

It is an easy exercise to check that the dynamical subequivalence is transitive and therefore defines a partial preorder on the set of open subsets of $X$.

Note that if $A \prec B$ then $\mu(A) \le \mu(B)$ for all $\mu \in M_G(X)$.
\end{definition}

\begin{remark}\label{rem: subeq cover properties}
Let $A$ be a closed set and let $B$ be an open set. Suppose that $A \prec B$ and this subequivalence is implemented by the cover $\{U_j\}_{j \in J}$ together with elements $g_j \in G$. Then for any given $\delta > 0$ we can always assume that
\begin{enumerate}
    \item $g_j\overline{U_j}$ are pairwise disjoint in $B$ and
    \item $\bigcup_{j \in J}U_j \subset A^\delta$.
\end{enumerate}
Indeed, item (i) is achieved by slightly shrinking sets $U_j$ and item (ii) is achieved by intersecting them with $A^\delta$.
\end{remark}

\begin{definition}\label{def: type semigroup}
The \emph{type semigroup} of an action $G \act X$ is the abelian semigroup generated by all open subsets of $X$ subject to the relations 
\[
[U \sqcup V] = [U] + [V] \quad \mbox{and} \quad [gU] = [U]
\]
for all $g \in G$. The dynamical subequivalence respects these relations and therefore extends to a partial preorder on the type semigroup (which we once again denote by $\prec$) that is compatible with the semigroup structure. We will frequently not make a distinction between a set $U$ and the element $[U]$.
\end{definition}

We remark that our definition is slightly different from the definition of the \emph{generalized type semigroup} introduced by Ma \cite{Ma21}. More specifically, we do not take a further quotient to obtain a semigroup with a genuine partial order on it. However, in the present paper we are not interested in the properties of the type semigroup itself and only use it to write calculations in a more convenient manner. Thus, the notion above is sufficient for our purposes.

\begin{definition}\label{def: closed set in type semigroup}
Let $\{A_i\}_{i=1}^n$ be closed sets. Employing a slight abuse of notation, we will write subequivalence expressions involving the terms $[A_i]$ if they hold with $A_i$ replaced by some sufficiently small neighbourhoods $A_i^{\delta_i}$.

As an example, if $A_1, A_2$ are closed and $B_1, B_2$ are open then the expression $[A_1] + [B_1] \prec [A_2] + [B_2]$ means that for any $\delta > 0$ there are $0 < \delta_1, \delta_2 < \delta$ such that $[A_1^{\delta_1}] + [B_1] \prec [A_2^{\delta_2}] + [B_2]$. 

We write an expression involving several consecutive subequivalences (such as $[A_1] \prec [A_2] \prec [A_3]$) if each of them holds in the above sense. One may check that the relation $\prec$ is still transitive and thus the calculations can be safely carried out.

\end{definition}

It turns out that there is a nice connection between dynamical subequivalence and the marker property. Specifically, the marker sets can be viewed as arbitrarily small everywhere positive elements in the type semigroup. This idea is formalized in the next few lemmas.

\begin{lemma}
\label{lem: L-free subeq}
Suppose $A$ is an $L$-free open subset of $X$ and $B$ is an open subset such that $L^{-1}B = X$. Then $A \prec B$.
\end{lemma}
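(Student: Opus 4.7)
The statement we want is that if $A$ is $L$-free and $\{h^{-1}B\}_{h \in L}$ covers $X$ (which is the hypothesis $L^{-1}B = X$ unpacked via the convention $L^{-1}B = \bigcup_{h \in L}h^{-1}B$), then $A \prec B$. My plan is to produce an explicit witness to the subequivalence, indexed by $L$ itself, using nothing more than these two hypotheses.

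\medskip

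\textbf{Plan.} For each $h \in L$, set
\[
U_h = A \cap h^{-1}B.
\]
Since each $h^{-1}B$ is open and $A$ is open, each $U_h$ is open. Because $L^{-1}B = \bigcup_{h \in L}h^{-1}B = X$, we have $A \subset \bigcup_{h \in L} U_h$. Translating,
\[
hU_h = hA \cap B \subset B,
\]
so the sets $\{hU_h\}_{h \in L}$ all lie inside $B$. Moreover, for distinct $h_1, h_2 \in L$ the sets $h_1A$ and $h_2A$ are disjoint by $L$-freeness of $A$, hence so are $h_1U_{h_1} \subset h_1A$ and $h_2U_{h_2} \subset h_2A$. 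Therefore $\{hU_h\}_{h \in L}$ is a pairwise disjoint family inside $B$, witnessed by the finite index set $L$ and the group elements $\{h\}_{h \in L}$.

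\medskip

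\textbf{Reduction to arbitrary closed subsets.} By the definition of subequivalence for open sets, I must check that $A' \prec B$ for every closed $A' \subset A$. Given any such $A'$, the finite open cover $\{U_h\}_{h \in L}$ of $A$ already covers $A'$, and the elements $\{h\}_{h \in L}$ witness $A' \prec B$ directly, with no further refinement needed. This finishes the argument.

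\medskip

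\textbf{Obstacle assessment.} There is essentially no obstacle; the content of the lemma is just to translate the two hypotheses into the definition of dynamical subequivalence via the natural cover $\{A \cap h^{-1}B\}_{h \in L}$. The only subtlety worth noting is the bookkeeping of $L$ versus $L^{-1}$ in the hypothesis, which is why the cover must be indexed by elements $h \in L$ acting on $A$ on the left (to invoke $L$-freeness), not by elements of $L^{-1}$. Once this is set up correctly, disjointness of $\{hU_h\}$ follows tautologically from $L$-freeness of $A$ and containment in $B$ follows tautologically from the definition of $U_h$.
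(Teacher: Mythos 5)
Your proposal is correct and is essentially identical to the paper's proof: the paper also sets $U_g = (g^{-1}B) \cap A$ for $g \in L$, notes these cover $A$, and observes that $gU_g = B \cap gA$ are pairwise disjoint subsets of $B$ by $L$-freeness. Your explicit check of the reduction to closed subsets $A' \subset A$ is a fine (if routine) addition that the paper leaves implicit.
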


\begin{proof}
For $g \in L$ set $U_g = \brak{g^{-1}B} \cap A$. Then 
\[
\bigcup_{g \in L}U_g = \bigcup_{g \in L}\brak{g^{-1}B} \cap A = A,
\]
and $gU_g = B \cap gA$ are pairwise disjoint subsets of $B$.
\end{proof}

\begin{remark}\label{rem: image subeq}
Note that in the previous lemma, by setting 
\[
V = \bigsqcup_{g \in L}gU_g,
\]
we can in fact obtain an open set $V \subset B$ such that $A \prec V \prec \abs{L}[A]$.
\end{remark}

The following easy statement is the cornerstone of our techniques.

\begin{lemma}\label{Lem: mult L-free subeq}
Let $G$ be a countably infinite group, let $k \in \Nb$ and let $B$ be a marker set. Then there exists some finite $L \subset G$ such that for any $L$-free open set $U$
\[
k[U] \prec [B].
\]
\end{lemma}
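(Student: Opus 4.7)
The plan is to construct a single open set $V \subset X$ representing $k[U]$ in the type semigroup and then fit $V$ inside $B$ via a single application of Lemma \ref{lem: L-free subeq}. Concretely, I first invoke compactness and the marker property of $B$ to fix a finite $M \finsub G$ with $MB = X$, enlarging $M$ if necessary so that $e \in M$. Next, using the infinitude of $G$ together with the finiteness of $MM^{-1}$, I would choose elements $h_1, \ldots, h_k \in G$ whose cosets $M^{-1}h_1, \ldots, M^{-1}h_k$ are pairwise disjoint---a standard greedy selection suffices, since at each step only a finite set of ``forbidden'' values has to be avoided. I then set $L := M^{-1}\{h_1, \ldots, h_k\}$, an explicit finite subset of $G$ of cardinality $|M|\cdot k$.

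Given any $L$-free open set $U$, the inclusion $\{h_1, \ldots, h_k\} \subset L$ (which uses $e \in M$) guarantees that $h_1 U, \ldots, h_k U$ are pairwise disjoint, so $V := \bigsqcup_{i=1}^{k} h_i U$ is a well-defined open set with $[V] = k[U]$ in the type semigroup. The heart of the argument is the verification that $V$ is itself $M^{-1}$-free: expanding $m_1^{-1} V \cap m_2^{-1} V$ for distinct $m_1, m_2 \in M$ into pieces $m_1^{-1} h_i U \cap m_2^{-1} h_j U$, the cosetwise choice of the $h_i$ ensures that the $|M|\cdot k$ group elements $\{m^{-1} h_i : (m,i) \in M \times \{1, \ldots, k\}\}$ are pairwise distinct, so $L$-freeness of $U$ forces each such intersection to vanish. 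Applying Lemma \ref{lem: L-free subeq} with $A := V$ and the witnessing set $M^{-1}$ (and noting $(M^{-1})^{-1} B = MB = X$) then yields $V \prec B$, so that $k[U] = [V] \prec [B]$ as required.

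The only subtle ingredient is the cosetwise selection of the $h_i$'s, which is where the infinitude of $G$ is essential; the remainder is a direct repackaging of Lemma \ref{lem: L-free subeq}. I do not anticipate any further obstacles.
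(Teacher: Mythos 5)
Your proof is correct and is essentially the paper's own argument: both choose $L$ to contain $k$ pairwise disjoint translates $M^{-1}h_i$ of $M^{-1}$ (where $MB=X$), form $V=\bigsqcup_i h_iU$ to realize $k[U]$, and conclude $V\prec B$ from Lemma \ref{lem: L-free subeq}. Your write-up merely makes explicit the verification that $V$ is $M^{-1}$-free, which the paper leaves implicit.
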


\begin{proof}
Since $B$ is a marker set there exists some finite $M \subset G$ containing the identity such that $MB = X$. Choose $L$ so that there are elements $g_1, g_2, \ldots, g_k \in G$ such that $M^{-1}g_i$ are pairwise disjoint subsets of $L$. Then for any $L$-free set $U$
\[
k[U] = \sum_{i=1}^k [g_iU]= \sqbrak{\bigsqcup_{i=1}^k g_iU} \prec B
\]
by Lemma \ref{lem: L-free subeq}.
\end{proof}

Some techniques for establishing dynamical subequivalence were developed in \cite{Nar22}. We recall the central lemma \cite[Lemma 3.1]{Nar22} of that work.

\begin{lemma}\label{lem: polygrowth subeq}
Let $A \subset X$ be closed and $B \subset X$ open. Assume that for some $\eps >0$, finite $D \subset G$, and $k \in \Nb$ we have
\[
\abs{\{g \in D^{-1}D \mid gx \in A^{\eps}\}} < k\abs{\{g \in D \mid gx \in B^{-\eps}\}}
\]
for all $x \in X$. Then $[A] \prec k[B]$.
\end{lemma}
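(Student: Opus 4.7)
My plan is to construct a finite open cover $\{V_j\}_{j \in J}$ of $A$ together with elements $g_j \in D$ such that the translates $\{g_jV_j\}_{j \in J}$ partition into $k$ pairwise disjoint subfamilies, each contained in $B$; this is precisely a witness to $[A] \prec k[B]$.

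First I would discretize the hypothesis. Using compactness of $A$, uniform continuity of the finitely many translations $h \in D^{-1}D$, and Remark \ref{rem: subeq cover properties}, cover $A$ by a finite family $\{V_j\}_{j \in J}$ of small open sets with base points $x_j \in V_j$, of diameter small enough that
\[
S_j := \{g \in D : gV_j \subset B\} \supset \{g \in D : gx_j \in B^{-\eps}\}
\]
and
\[
R_j := \{h \in D^{-1}D : hV_j \cap V_{j'} \neq \emptyset \text{ for some } j' \in J\} \subset \{h \in D^{-1}D : hx_j \in A^{\eps}\}.
\]
The pointwise hypothesis applied at each $x_j$ then gives $\abs{R_j} < k\abs{S_j}$ for every $j \in J$.

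Next, I would reformulate the problem as a coloring question. For any choice $g_j \in S_j$, let $H$ be the graph on vertex set $J$ with $j \sim j'$ iff $g_jV_j \cap g_{j'}V_{j'} \neq \emptyset$; a proper $k$-coloring of $H$ immediately gives the desired subequivalence, since each color class is a family of disjoint sets inside $B$. An edge $j \sim j'$ forces $g_{j'}^{-1}g_j \in R_j$, so after the $g_j$'s are chosen the degree of $j$ in $H$ is controlled by $\abs{R_j}$. The strict inequality $\abs{R_j} < k\abs{S_j}$ then provides the expansion needed for a Hall-type / SDR-with-multiplicity selection of the $g_j$'s forcing $H$ to have maximum degree less than $k$, after which greedy coloring finishes the proof. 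I expect this selection step to be the main obstacle: converting the per-vertex bounds into a globally consistent choice of $g_j$'s is the combinatorial heart of the argument, and is precisely where the strictness of the inequality $\abs{R_j} < k\abs{S_j}$ is essential.
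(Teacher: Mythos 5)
First, note that the paper does not actually prove this lemma: it is quoted verbatim from \cite{Nar22} (Lemma 3.1 there), so there is no in-paper argument to compare against. Judged on its own terms, your proposal has the right overall architecture (discretize $A$ to a finite cover with base points, apply the pointwise inequality at the base points, reduce to a $k$-colouring of a conflict graph), and the discretization step is fine. But the argument has two genuine gaps, concentrated exactly where you flag ``the main obstacle,'' and one supporting claim is false as stated. The degree bound is wrong: an edge $j \sim j'$ does force $h := g_{j'}^{-1}g_j \in R_j$, but the map $j' \mapsto g_{j'}^{-1}g_j$ need not be injective on the neighbours of $j$. For a fixed $h \in R_j$ the small set $hV_j$ can meet many of the sets $V_{j'}$ (they form an open cover of $A$, hence overlap), and all of those $j'$ may end up with $g_{j'} = g_jh^{-1}$. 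So $\deg_H(j)$ is controlled by $\abs{R_j}$ only up to the local multiplicity of the cover, which is unrelated to $k$, and it is not clear that any choice of the $g_j$'s repairs this.

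Second, the selection step is not an application of Hall's theorem as set up. The hypothesis yields only the per-vertex inequality $\abs{R_j} < k\abs{S_j}$, whereas a Hall/SDR argument with multiplicity $k$ requires the expansion condition for \emph{every} subset of indices simultaneously; the per-vertex bound does not imply this in general. It does here, but only via a double-counting argument that exploits the specific structure of the sets involved (for a target $b \in Dx_j$ one has $D^{-1}b \subset D^{-1}Dx_j$, which allows one to distribute the weight of each index $j$ over its targets and sum to get $\abs{S} < k\abs{N(S)}$ for arbitrary finite $S$); this computation is the combinatorial heart of the lemma and is absent from the proposal. Moreover, even granting such a selection, what Hall-with-multiplicity delivers is ``each target is chosen at most $k$ times,'' which produces a proper $k$-colouring of $H$ only if sets assigned to \emph{distinct} targets are automatically disjoint. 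That is an additional topological requirement on the cover (distinct targets must correspond to uniformly separated regions of $B$, and the assignment must be consistent across overlapping $V_j$'s), which does not follow from ``diameter small enough'' and is not arranged in the proposal. As written, the proof is a correct reduction plus an acknowledged placeholder where the actual work happens, so it cannot be accepted as complete.
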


The result above straightforwardly extends to show that that
\[
\sum_{i=1}^n [A_i] \prec k\sum_{j=1}^m [B_j]
\]
for a collection of closed sets $\{A_i\}_{i=1}^n$ and a collection of open sets $\{B_j\}_{j=1}^m$ as long as we have 
\[
\sum_{i=1}^n\abs{\{g \in D^{-1}D \mid gx \in A_i^{\eps}\}} < k\sum_{j=1}^m\abs{\{g \in D \mid gx \in B_j^{-\eps}\}}
\]
for some $\eps >0$, finite $D \subset G$, and $k \in \Nb$ and all $x \in X$.

The special situation of $D = \{e\}$ and $k=1$ gives a surprisingly useful corollary which we record below. Note that in this case the group action is completely irrelevant.

\begin{corollary}\label{cor: subeq of elements}
Let $\{A_i\}_{i=1}^n$ be a collection of closed sets and let $\{B_j\}_{j=1}^m$ be a collection of open sets. Suppose that for some $\eps > 0$ we have
\[
\abs{\{i \mid x \in A_i^{\eps}\}} < \abs{\{j \mid x \in B_j^{-\eps}\}}
\]
for all $x \in X$. Then $\sum_{i=1}^n [A_i] \prec \sum_{j=1}^m [B_i]$.
\end{corollary}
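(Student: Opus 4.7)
The plan is to recognize this corollary as an immediate specialization of the multi-set extension of Lemma \ref{lem: polygrowth subeq} described in the paragraph following it, with the trivial choices $D = \{e\}$ and $k = 1$.

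With $D = \{e\}$ one has $D^{-1}D = \{e\}$ as well. For any $x \in X$ and any index $i$, the set $\{g \in D^{-1}D \mid gx \in A_i^{\eps}\}$ is either $\{e\}$, in case $x \in A_i^{\eps}$, or empty otherwise. Summing over $i$ gives
\[
\sum_{i=1}^n \abs{\{g \in D^{-1}D \mid gx \in A_i^{\eps}\}} = \abs{\{i \mid x \in A_i^{\eps}\}},
\]
and an identical calculation on the right-hand side yields
\[
\sum_{j=1}^m \abs{\{g \in D \mid gx \in B_j^{-\eps}\}} = \abs{\{j \mid x \in B_j^{-\eps}\}}.
\]
Thus the hypothesis of the corollary is precisely the pointwise inequality required by the extended lemma for this particular choice of $D$ and $k$, and the conclusion $\sum_{i=1}^n [A_i] \prec \sum_{j=1}^m [B_j]$ follows directly.

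There is no genuine obstacle; the argument is a one-line substitution. What deserves emphasis — and what the authors flag in the sentence preceding the corollary — is that the group action disappears entirely from the hypothesis once $D$ is trivial, so the statement reduces to a purely combinatorial incidence condition between $\eps$-fattenings of the closed sets and $\eps$-shrinkings of the open sets. This is precisely the feature that makes the corollary convenient to invoke later when one needs to compare type-semigroup elements without tracking any dynamical data.
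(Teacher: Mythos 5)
Your proof is correct and is exactly the paper's argument: the corollary is stated there as the special case $D=\{e\}$, $k=1$ of the multiset extension of Lemma \ref{lem: polygrowth subeq}, and your verification that the counting hypothesis reduces to the stated incidence condition is the whole content. Nothing further is needed.
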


We end this subsection by recalling the definition of comparison and some of its variations. Here, as usual, $A$ and $A_i$ stand for closed sets and $B$ and $B_j$ stand for open sets.

\begin{definition}\label{def: comparison}
An action $G \act X$ is said to have:
\begin{itemize}
    \item \emph{comparison} if $A \prec B$ whenever $\mu(A) < \mu(B)$ for every $\mu \in M_G(X)$,
    \item \emph{$k$-comparison} (for some $k \in \Nb$) if $[A] \prec (k+1)[B]$ whenever $\mu(A) < \mu(B)$ for every $\mu \in M_G(X)$,
    \item \emph{weak $k$-comparison} (for some $k \in \Nb$) if $[A] \prec (k+1)[B]$ whenever $\uD(A) < \lD(B)$,
    \item \emph{comparison on multisets} if $\sum_{i=1}^{n}[A_i] \prec \sum_{j=1}^{m}[B_j]$ whenever $\sum_{i=1}^{n}\mu(A_i) < \sum_{j=1}^{m}\mu(B_j)$ for every $\mu \in M_G(X)$.
\end{itemize}
\end{definition}

Lemma \ref{lem: polygrowth subeq} was used in \cite{Nar22} to deduce the following main result \cite[Theorem 3.2]{Nar22}.
\begin{theorem}\label{thm: polygrowth comp}
Let $G$ be a group of polynomial growth with $d$ being the order of the growth function. Then $G \act X$ has weak $2^d$-comparison.
\end{theorem}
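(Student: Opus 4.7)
The plan is to verify the hypothesis of Lemma~\ref{lem: polygrowth subeq} with $k = 2^d + 1$ and $D = B_R$, the word ball of large radius $R$ in $G$ with respect to a fixed finite symmetric generating set; then the lemma immediately yields $[A] \prec (2^d+1)[B]$. This choice of $D$ is natural because $D^{-1}D \subseteq B_{2R}$, and for a group of polynomial growth of order $d$ one has $|B_{2R}|/|B_R| \to 2^d$ as $R \to \infty$ (Pansu's asymptotic $|B_R| \sim cR^d$). Moreover, $(B_R)_R$ is a F{\o}lner sequence, so $\uD_{B_R}$ and $\lD_{B_R}$ approach the global Banach densities. These two features together let us convert $\uD(A)<\lD(B)$ into the pointwise counting inequality demanded by Lemma~\ref{lem: polygrowth subeq}.

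Write $\alpha = \uD(A)$ and $\beta = \lD(B)$. Since $\alpha < \beta$ and $\alpha \ge 0$, the strict inequality $2^d\alpha < (2^d+1)\beta$ holds, so by continuity I fix $\delta > 0$ with $(2^d + \delta)(\alpha + \delta) < (2^d+1)(\beta - \delta)$. Next, using $\overline{A^\eps}\downarrow A$ and $B^{-\eps}\uparrow B$ as $\eps\to 0$, together with weak-$*$ compactness of $M_G(X)$ and upper/lower semi-continuity of $\mu\mapsto \mu(K)$ for closed $K$ and $\mu\mapsto \mu(U)$ for open $U$, I pick $\eps > 0$ so small that $\uD(\overline{A^\eps}) \le \alpha + \delta/3$ and $\lD(B^{-\eps}) \ge \beta - \delta/3$.

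The main step is to choose $R$ so that the following all hold: (i) $|B_{2R}|/|B_R| \le 2^d + \delta$; (ii) $\uD_{B_{2R}}(\overline{A^\eps}) \le \uD(\overline{A^\eps}) + \delta/3$; (iii) $\lD_{B_R}(B^{-\eps}) \ge \lD(B^{-\eps}) - \delta/3$. Item (i) is immediate from Pansu for $R$ large. Items (ii) and (iii) follow because $(B_R)$ is F{\o}lner: for the empirical measures $\mu_{R,x} = |B_R|^{-1}\sum_{g\in B_R}\delta_{gx}$ on $X$, any weak-$*$ cluster point is $G$-invariant, and upper/lower semi-continuity pin down $\sup_x \mu_{R,x}(\overline{A^\eps}) \to \uD(\overline{A^\eps})$ and $\inf_x \mu_{R,x}(B^{-\eps}) \to \lD(B^{-\eps})$. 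With such $R$ and $D = B_R$, for every $x \in X$,
\begin{align*}
|\{g \in D^{-1}D : gx \in A^\eps\}|
&\le |B_{2R}|\cdot \uD_{B_{2R}}(\overline{A^\eps}) \\
&\le (2^d + \delta)\,|B_R|\,(\alpha + \delta) \\
&< (2^d + 1)\,|B_R|\,(\beta - \delta) \\
&\le (2^d + 1)\,|\{g \in D : gx \in B^{-\eps}\}|,
\end{align*}
where the first inequality uses $D^{-1}D\subseteq B_{2R}$ and the uniform bound via $\uD_{B_{2R}}$, and the last uses (iii) together with $\lD_{B_R}(B^{-\eps})\ge \beta-2\delta/3 > \beta-\delta$. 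Lemma~\ref{lem: polygrowth subeq} then yields $[A] \prec (2^d+1)[B]$, i.e., weak $2^d$-comparison.

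The main obstacle is the simultaneous approximation in (ii) and (iii): the preliminaries record Banach densities only as $\liminf$/$\limsup$ along F{\o}lner sequences, which a priori permits the good radii for (ii) and (iii) to be disjoint. Upgrading these to actual limits for closed (resp.\ open) sets along any F{\o}lner sequence is the one ingredient requiring genuine work beyond what is stated in the preliminaries, but it is handled by the weak-$*$ compactness argument above. Everything else is a routine combination of the polynomial-growth doubling asymptotics with Lemma~\ref{lem: polygrowth subeq}.
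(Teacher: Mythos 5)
Your proof is correct and follows the route the paper itself indicates (deducing the theorem from Lemma~\ref{lem: polygrowth subeq} applied to $D = B_R$, using that $D^{-1}D \subseteq B_{2R}$ and that polynomial growth of order $d$ forces $|B_{2R}|/|B_R|$ to be close to $2^d$ for suitable $R$), which is exactly the argument of the cited source. You also correctly identified and repaired the only delicate point, namely upgrading the $\liminf$/$\limsup$ characterization of Banach densities to genuine limits along $(B_R)_R$ for closed and open sets via weak-$*$ compactness, so that the three conditions on $R$ can be met simultaneously.
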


\subsection{URP and URPC}
In this subsection we introduce the two central properties used in this work. Both of them can be regarded as a topological version of the conclusion of the Ornstein-Weiss tiling theorem. We begin with some basic terminology.

\begin{definition}
A pair $(S, V)$, where $S \subset G$ is finite and $V \subset X$ is open is called a \emph{tower} if the sets $\{gV\}_{g \in S}$ are pairwise disjoint (that is, $V$ is $S$-free). In such a situation, we say that
\begin{itemize}
    \item $V$ is the \emph{base} of the tower.
    \item $S$ is the \emph{shape} of the tower,
    \item $gV$ for $g \in S$ are \emph{levels} of the tower, and
    \item $SV$ is the \emph{footprint} of the tower.
\end{itemize}
A finite collection $\{(S_i, V_i)\}_{i=1}^n$ of towers is called a \emph{castle} if moreover all the footprints $S_iV_i$ are pairwise disjoint. The \emph{footprint} of a castle $\{(S_i, V_i)\}_{i=1}^n$ is the set $\bigsqcup_{i=1}^nS_iV_i$.
\end{definition}

\begin{definition}\label{def: URP}
An action $\GonX$ has the \emph{Uniform Rokhlin Property (URP)} if for every finite set $K \subset G$ and every $\eps > 0$ there is a castle $\{(S_i, V_i)\}_{i=1}^n$ such that the shapes $S_i$ are $(K, \eps)$-invariant and
\[
     \uD\brak{X \setminus\bigsqcup_{i=1}^nS_iV_i} < \eps.
\]
\end{definition}

We remark that the only difference between URP and \emph{almost finiteness in measure} (see \cite{KerSza20}) is that the latter additionally asks for all levels $gV_i, g \in S_i$ of the castle to be small in diameter. This seemingly small distinction turns out to be quite significant. Indeed, Kerr ans Szab{\'o} showed that for free actions almost finiteness in measure is equivalent to the small boundary property (SBP). On the other hand, there are many actions that have URP but not SBP.

\begin{lemma}\label{lem: URP implies marker}
Suppose an action $\GonX$ has URP. Then it has the marker property.
\end{lemma}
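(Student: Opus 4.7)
The plan is to extract a single $L$-free marker set from one URP castle. Fix $L \finsub G$; replacing $L$ by $L \cup L^{-1} \cup \{e\}$, assume $L$ is symmetric and contains the identity. Applying URP with $K = L$ and any $\eps \in (0, 1)$ yields a castle $\{(S_i, V_i)\}_{i=1}^n$ whose shapes are $(L, \eps)$-invariant and whose footprint satisfies $\uD\bigl(X \setminus \bigsqcup_{i}S_iV_i\bigr) < \eps$.

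For each $i$, the inequality $|\bigcap_{g \in L}gS_i| > (1-\eps)|S_i| > 0$ forces the intersection to be nonempty, so I can pick some $s_i \in S_i$ with $L^{-1}s_i \subset S_i$, which by symmetry of $L$ is equivalent to $Ls_i \subset S_i$. Set $U_i := s_iV_i$ and $U := \bigsqcup_{i=1}^n U_i$; the disjointness is automatic since $U_i \subset S_iV_i$ and distinct footprints of the castle are disjoint. I first verify that $U$ is $L$-free. For distinct $h, h^\prime \in L$ and arbitrary indices $i, j$, the translates $hU_i = (hs_i)V_i$ and $h^\prime U_j = (h^\prime s_j)V_j$ lie inside $S_iV_i$ and $S_jV_j$ respectively (since $hs_i \in S_i$ and $h^\prime s_j \in S_j$). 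If $i \neq j$ these footprints are disjoint; if $i = j$ the two sets are distinct levels of the same tower, as $hs_i \neq h^\prime s_i$ by cancellation in $G$, hence disjoint by the definition of a tower.

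Next, $GU = \bigcup_i G(s_iV_i) = \bigcup_i GV_i$ is open and $G$-invariant, so its complement $A := X \setminus \bigcup_i GV_i$ is closed, $G$-invariant, and contained in $X \setminus \bigsqcup_i S_iV_i$. If $A$ were nonempty, then for any $y \in A$ one has $Fy \subset Gy \subset A$ for every $F \finsub G$, hence $\uD_F(A) \geq |A \cap Fy|/|F| = 1$, giving $\uD(A) = 1$. Combined with the URP estimate this yields $1 = \uD(A) \leq \uD\bigl(X \setminus \bigsqcup_i S_iV_i\bigr) < \eps < 1$, a contradiction. Therefore $A = \emptyset$, so $GU = X$ and $U$ is the desired $L$-free marker set.

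The conceptual heart of the argument is the choice of $s_i$ inside the $L$-invariant interior of $S_i$: this pins every $L$-translate of $U_i$ inside the $i$-th footprint, so $L$-freeness of the union $U$ is reduced to the disjointness already built into the castle structure. I do not anticipate any genuine obstacle; the proof uses only URP's two guarantees (arbitrarily $K$-invariant shapes and arbitrarily small footprint complement in upper Banach density) together with the elementary observation that a nonempty closed $G$-invariant subset of $X$ has $\uD = 1$.
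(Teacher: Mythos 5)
Your proof is correct and follows essentially the same route as the paper: pick a level $g_iV_i$ in the $L$-invariant core of each tower and take the disjoint union. The only (cosmetic) difference is the final step: the paper concludes $GU=X$ by noting $\lD(U)>0$ and that $X\setminus GU$ would otherwise carry an invariant measure, whereas you argue directly that a nonempty closed invariant complement would have upper Banach density $1$, contradicting the URP estimate — both are routine and valid.
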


\begin{proof}
Let $L$ be a finite subset of $G$ containing the identity. Let $\{(S_i, V_i)\}_{i=1}^n$ be an open castle with $(L^{-1}, 1/2)$-invariant shapes such that 
\[
\uD\brak{X \setminus \bigsqcup_{i=1}^nS_iV_i} < 1/2.
\]
By the invariance condition, for each $i=1, 2, \ldots, n$ there is an element $g_i \in S_i$ such that $Lg_i \subset S_i$. Set
\[
U = \bigsqcup_{i=1}^ng_iV_i.
\]
Then $U$ is $L$-free and $\lD(U) > 0$ which means it is a marker set.
\end{proof}

\begin{lemma}\label{lem: eps levels subeq marker}
Let $U$ be a marker set. Then there exists an $\eps > 0$ and a finite set $K \subset G$ such that for any castle $\{(S_i, V_i)\}_{i=1}^n$ with $(K, \eps)$-invariant shapes we have
\[
\sum_{i=1}^n\lceil\eps\abs{S_i}\rceil [V_i] \prec U.
\]
\end{lemma}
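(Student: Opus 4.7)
My plan is to construct an $L$-free open subset $W \subseteq X$ whose type in the type semigroup equals $\sum_i \lceil\eps\abs{S_i}\rceil[V_i]$, and then invoke Lemma \ref{lem: L-free subeq} to deduce $[W] \prec [U]$. To set up $L$, I would use that $U$ is a marker set: by compactness there is $M \finsub G$ with $MU = X$, so setting $L := M^{-1}$ (and enlarging $L$ to contain $e$ if needed) guarantees $L^{-1}U = X$, which is exactly the hypothesis of Lemma \ref{lem: L-free subeq}.

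Next I would fix $K := L^{-1}$ and $\eps := 1/(\abs{L^{-1}L}+2)$ (any value with $(1-\eps)/\abs{L^{-1}L} > \eps$ works). Given a castle $\{(S_i, V_i)\}_{i=1}^n$ with $(K, \eps)$-invariant shapes, the ``$L$-interior''
\[
S_i^{\mathrm{int}} := S_i \cap \bigcap_{l \in L} l^{-1}S_i = \{\, g \in S_i : Lg \subseteq S_i \,\}
\]
satisfies $\abs{S_i^{\mathrm{int}}} > (1-\eps)\abs{S_i}$ by the invariance hypothesis. Inside $S_i^{\mathrm{int}}$ I would pick a maximal $L$-separated subset $T_i$, meaning $Lg \cap Lg' = \emptyset$ (equivalently $gg'^{-1} \notin L^{-1}L$) for distinct $g, g' \in T_i$. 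By maximality, every $h \in S_i^{\mathrm{int}}$ lies in $L^{-1}Lg$ for some $g \in T_i$, so $S_i^{\mathrm{int}} \subseteq L^{-1}LT_i$ and
\[
\abs{T_i} \;\ge\; \frac{\abs{S_i^{\mathrm{int}}}}{\abs{L^{-1}L}} \;>\; \frac{(1-\eps)\abs{S_i}}{\abs{L^{-1}L}} \;>\; \eps\abs{S_i}.
\]
Since $\abs{T_i}$ is a positive integer, this gives $\abs{T_i} \ge \lceil \eps\abs{S_i}\rceil$, and I would then shrink $T_i$ so that equality holds.

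Finally, define $W := \bigsqcup_{i=1}^n \bigsqcup_{g \in T_i} gV_i$ and verify that $W$ is $L$-free, i.e., that the translates $\{\, lgV_i : l \in L,\, g \in T_i,\, 1 \le i \le n \,\}$ are pairwise disjoint up to the trivial coincidence. If $i \ne i'$, then $lg \in S_i$ and $l'g' \in S_{i'}$ (using $LT_i \subseteq S_i$), so $lgV_i$ and $l'g'V_{i'}$ lie in the disjoint footprints $S_iV_i$ and $S_{i'}V_{i'}$. If $i = i'$, the $S_i$-freeness of $V_i$ reduces the question to whether $lg = l'g'$, and this is excluded by the $L$-separation of $T_i$ (combined with freeness of left multiplication when $g = g'$ and $l \ne l'$). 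Thus $W$ is $L$-free, the relations in the type semigroup give $[W] = \sum_i \abs{T_i}[V_i] = \sum_i \lceil\eps\abs{S_i}\rceil [V_i]$, and Lemma \ref{lem: L-free subeq} yields
\[
\sum_{i=1}^n \lceil \eps \abs{S_i}\rceil [V_i] \;=\; [W] \;\prec\; [U].
\]
The only genuinely delicate point is choosing $\eps$ on the order of $1/\abs{L^{-1}L}$ so that a maximally $L$-separated subset of the $L$-interior carries enough elements to dominate $\lceil\eps\abs{S_i}\rceil$; everything else is an elementary disjointness check.
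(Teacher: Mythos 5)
Your proposal is correct and follows essentially the same route as the paper: fix $L$ with $L^{-1}U=X$, choose $\eps$ with $(1-\eps)/|L^{-1}L|>\eps$, extract from the $L$-interior of each $S_i$ a maximal $L$-separated (the paper says ``$K$-independent'' for $K=L^{-1}L$) subset of size at least $\eps|S_i|$, and feed the resulting $L$-free union of levels into Lemma \ref{lem: L-free subeq}. The only cosmetic difference is that you take $K=L^{-1}$ where the paper takes $K=L^{-1}L$; both suffice since the argument only needs $LT_i\subseteq S_i$ and the counting bound $|T_i|\ge|S_i^{\mathrm{int}}|/|L^{-1}L|$.
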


\begin{proof}
Find a finite set $L \subset G$ containing the identity such that $L^{-1}U = X$. Set $K = L^{-1}L$ and choose $\eps$ so that $(1-\eps)/\abs{K} > \eps$. Let $S_i$ be a $(K, \eps)$-invariant finite subset of $G$. Then 
\[
\abs{\bigcap_{g \in K}gS_i} > (1 - \eps)\abs{S_i}
\]
and we can choose a subset $S_i^\prime \subset \bigcap_{g \in K}gS_i$ which is $K$-independent and satisfies
\[
\abs{S_i^\prime} \ge \frac{\abs{\bigcap_{g \in K}gS_i}}{\abs{K}} > \frac{(1-\eps)}{\abs{K}}\abs{S_i} > \eps \abs{S_i}.
\]
It follows that
\[
\sum_{i \in I}\lceil\eps\abs{S_i}\rceil[V_i] \prec \sum_{i \in I}S_i^\prime [V_i].
\]
However, $\bigsqcup_{i \in I}S_i^\prime V_i$ is an $L$-free open set and therefore subequivalent to $U$ by Lemma \ref{lem: L-free subeq}.
\end{proof}

We are now ready to prove the main theorem of this section, which gives several equivalent definitions for the conjunction of URP and comparison.

\begin{theorem}
\label{thm: URP+comp equiv}
For an action $G \act X$ the following are equivalent.
\begin{enumerate}
    \item It has URP and comparison.
    \item It has URP and weak $k$-comparison for some $k \in \Nb$.
    \item For every finite $K \subset G$ and $\eps > 0$ there is a castle $\{(S_i, V_i)\}_{i=1}^n$ with $(K, \eps)$-invariant shapes such that
    \[
    X \setminus \bigsqcup_{i=1}^nS_iV_i \prec \sum_{i=1}^n\lfloor \eps\abs{S_i}\rfloor [V_i].
    \]
    \item It has the marker property and for every finite $K \subset G$, $\eps > 0$, and a marker set $U$ there is an open castle $\{(S_i, V_i)\}_{i=1}^n$ with $(K, \eps)$-invariant shapes such that
    \[
    X \setminus \bigsqcup_{i=1}^nS_iV_i \prec U.
    \]
\end{enumerate}
\end{theorem}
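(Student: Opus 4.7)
I would establish the theorem via the cycle (i) $\Rightarrow$ (ii) $\Rightarrow$ (iii) $\Rightarrow$ (iv) $\Rightarrow$ (i); the first arrow is trivial, since comparison coincides with weak $0$-comparison.

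For (ii) $\Rightarrow$ (iii), fix $K$ and $\eps$ and invoke URP with parameters $(K^\prime, \delta)$ for $K^\prime \supset K$ large and $\delta$ small, producing a castle $\{(S_i, V_i)\}$ with $(K, \delta)$-invariant shapes of cardinality at least $|K^\prime|$ and remainder satisfying $\uD(R) < \delta$. Select $T_i^\prime \subset S_i$ with $|T_i^\prime| = \lceil c |S_i| \rceil$ for some constant $c \in (\delta/(1-\delta),\ \eps/(k+1))$; this interval becomes nonempty once $\delta < \eps/(k+1+\eps)$. The open set $B^\prime = \bigsqcup_i T_i^\prime V_i$ then satisfies $\lD(B^\prime) \ge c(1-\delta) > \delta > \uD(R)$, so weak $k$-comparison gives $R \prec (k+1) B^\prime$; since $(k+1)|T_i^\prime| \le \lfloor \eps |S_i| \rfloor$ whenever $|S_i|$ is sufficiently large, we conclude $R \prec \sum_i \lfloor \eps |S_i| \rfloor [V_i]$. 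For (iii) $\Rightarrow$ (iv), (iii) plainly implies URP (since $\uD(\sum_i \lfloor \eps |S_i| \rfloor [V_i]) \le \eps$), and hence the marker property by Lemma \ref{lem: URP implies marker}. Given a marker set $U$, Lemma \ref{lem: eps levels subeq marker} supplies $(K^\prime, \eps^\prime)$ such that every castle with $(K^\prime, \eps^\prime)$-invariant shapes satisfies $\sum_i \lceil \eps^\prime |S_i| \rceil [V_i] \prec U$; applying (iii) with combined parameters $(K \cup K^\prime,\ \min(\eps, \eps^\prime))$ chains the subequivalences $R \prec \sum_i \lfloor \min(\eps, \eps^\prime) |S_i| \rfloor [V_i] \prec \sum_i \lceil \eps^\prime |S_i| \rceil [V_i] \prec U$.

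For (iv) $\Rightarrow$ (i), URP is quick: given $\eps$, fix $L \finsub G$ with $|L| > 1/\eps$, use the marker property to obtain an $L$-free marker set $U$ (for which $\mu(U) \le 1/|L| < \eps$ for every $\mu \in M_G(X)$), and apply (iv) with this $U$ to produce a castle with $R \prec U$, giving $\uD(R) \le 1/|L| < \eps$. For comparison, assume $\mu(A) < \mu(B)$ for every $\mu$; compactness of $M_G(X)$ combined with semicontinuity and a Dini-type argument produces $\eta, \delta > 0$ with $\mu(B^{-2\delta}) - \mu(A^{2\delta}) > \eta$ uniformly. Choose a marker set $U$ with uniformly small $\mu(U)$ and apply (iv) to obtain a highly invariant castle with $R \prec U$. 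Refine each $V_i$ into open cells $V_i^{J, J^\prime}$ indexed by pairs of subsets $J, J^\prime \subset S_i$, where $J = \{g \in S_i : gx \in A^{-\delta}\}$ and $J^\prime = \{g \in S_i : gx \in B^{-\delta}\}$ record the $A$- and $B$-patterns of $x$. On each cell with $|J| \le |J^\prime|$ the intra-tower subequivalence $\bigsqcup_{g \in J} g V_i^{J,J^\prime} \prec \bigsqcup_{g \in J^\prime} g V_i^{J,J^\prime} \subset B$ implemented by any injection $J \hookrightarrow J^\prime$ accounts for the bulk of $A$. The cells with $|J| > |J^\prime|$ carry a total measure of order $\eta$ (forced by the uniform gap), so that together with the boundary layer they can be absorbed into $U$ via Corollary \ref{cor: subeq of elements}.

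The principal obstacle lies in the comparison step of (iv) $\Rightarrow$ (i): the cell refinement does not classify points $x \in V_i$ whose level images $gx$ fall within distance $\delta$ of $\partial A$ or $\partial B$, and the total measure of this boundary layer is not automatically small. The standard workaround is to prove the equivalent statement $\overline{A^{\delta/2}} \prec B^{-\delta/2}$ in place of $A \prec B$ and to exploit the high invariance of the shapes so that the boundary contribution and the "bad" cells with $|J| > |J^\prime|$ together remain strictly below $\mu(U)$.
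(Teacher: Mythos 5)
Your implications (i)$\Rightarrow$(ii)$\Rightarrow$(iii)$\Rightarrow$(iv) and the URP half of (iv)$\Rightarrow$(i) are correct and essentially identical to the paper's argument: the same selection of an $\eps/(k+1)$-fraction of levels for (ii)$\Rightarrow$(iii), and the same chaining through Lemmas \ref{lem: URP implies marker} and \ref{lem: eps levels subeq marker} for (iii)$\Rightarrow$(iv).

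The comparison half of (iv)$\Rightarrow$(i) has two genuine gaps. First, your treatment of the ``bad'' cells with $\abs{J} > \abs{J'}$ is wrong in mechanism: you assert they ``carry a total measure of order $\eta$'' and can be ``absorbed into $U$ via Corollary \ref{cor: subeq of elements}'', but a small-measure bound does not yield a dynamical subequivalence (that would presuppose comparison, which is exactly what you are proving), and Corollary \ref{cor: subeq of elements} requires a pointwise counting hypothesis, not a measure estimate. The correct route --- the one the paper takes --- is that the uniform gap $\mu(A^{\delta}) + \delta < \mu(B^{-\delta})$ upgrades, for all sufficiently invariant shapes $S$ and \emph{every} $x \in X$, to the pointwise inequality \eqref{eq: counting ineq}; with the castle shapes chosen that invariant there are simply no bad cells, and the whole cell bookkeeping collapses into a single application of Corollary \ref{cor: subeq of elements} on the union of slightly shrunk bases. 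This also disposes of the ``principal obstacle'' you flag at the end: the boundary layer is handled by the nested neighbourhoods ($A^{\eps}$ versus $B^{-\eps}$, and $\overline{V_i^{-2\tau/3}}$ versus $V_i^{-\tau/2}$) built into that corollary, not by a separate absorption step that you leave unexecuted.

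Second, even granting the absorption, your argument terminates with $A$ subequivalent to (an image inside $B$) plus $[U]$, where $U$ is an arbitrary small marker set bearing no relation to $B$; the target must be $B$ itself. One must first place a small marker set \emph{inside} $B$: since $\lD(B) > 0$ there is a finite $L$ with $L^{-1}B = X$, and Lemma \ref{lem: L-free subeq} together with Remark \ref{rem: image subeq} and Lemma \ref{lem: MS shrink} pushes a sufficiently free marker set into an open $V \subset B$ with $\uD(\overline{V})$ small; the main matching is then run into $B' = B \setminus \overline{V}$ and the castle remainder is sent into $V$, keeping the two images disjoint. Without this step the proof does not close.
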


\begin{proof}
It is clear that (i) implies (ii). 

To see that (ii) implies (iii), use URP to choose a castle $\{(S_i, V_i)\}_{i=1}^n$ such that the shapes $S_i$ are $(K, \eps)$-invariant with $\abs{S_i} > (k+1)/\eps$ and
\[
\uD\brak{X \setminus \bigsqcup_{i=1}^nS_iV_i} < \frac{\eps(1-\eps)}{2(k+1)} < \eps.
\]
For each $i=1, 2, \ldots, n$ choose a subset $S_i^\prime \subset S_i$ with
\[
\abs{S_i^\prime} = \left\lfloor \frac{\eps\abs{S_i}}{k+1}\right\rfloor > \frac{\eps\abs{S_i}}{2(k+1)}.
\]
It follows that 
\[
\lD\brak{\bigsqcup_{i=1}^nS_i^\prime V_i} > \frac{\eps(1-\eps)}{2(k+1)} > \uD\brak{X \setminus \bigsqcup_{i=1}^nS_iV_i}
\]
and thus, using weak $k$-comparison,
\[
X \setminus \bigsqcup_{i=1}^nS_iV_i \prec (k+1)\left[\bigsqcup_{i=1}^nS_i^\prime V_i\right] \prec \sum_{i=1}^n\lfloor \eps\abs{S_i}\rfloor [V_i].
\]

Lemmas \ref{lem: URP implies marker} and \ref{lem: eps levels subeq marker} show that (iii) implies (iv). 

It remains to show that (iv) implies (i). Clearly, URP holds and therefore we only need to show comparison. Let $A$ be a closed set and $B$ an open set such that $\mu(A) < \mu(B)$ for every $G$-invariant probability measure $\mu$. By \cite[Lemma 3.3]{Ker20} there exists some $\delta > 0$ such that in fact 
\[
\mu(A^\delta) + \delta < \mu(B^{-\delta})
\]
for every $G$-invariant probability measure $\mu$. Note that, in particular, $\mu(B) > \delta$ for every $\mu \in M_G(X)$ and thus $\lD(B) > 0$ which means that it is a marker set. It follows that there is some finite set $L \subset G$ such that $L^{-1}B = X$. Find a marker set $U$ with $\uD(\overline{U}) < \delta/\abs{L}$. By slightly shrinking the set $U$ (Lemma \ref{lem: MS shrink}) and using Remark \ref{rem: image subeq} we can obtain an open set $V \subset B$ such that $V$ is a marker set and $\uD(\overline{V}) < \delta$. Set $B^\prime = B \setminus \overline{V}$. Then 
\[
B^\prime \sqcup V \subset B
\]
and
\[
\mu(A) < \mu(B^\prime)
\]
for every $\mu \in M_G(X)$. The last inequality implies that there exist some finite $K \subset G$ and $\eps > 0$ such that for every $(K, \eps)$-invariant set $S \subset G$ and every $x \in X$ we have
\begin{equation}\label{eq: counting ineq}
\sum_{g \in S}\delta_x(g^{-1}A) < \sum_{g \in S}\delta_x(g^{-1}B^\prime),    
\end{equation}
where $\delta_x$ is the Dirac measure at $x$. Choose an open castle $\{(S_i, V_i)\}_{i=1}^n$ with $(K, \eps)$-invariant shapes containing the identity such that
\[
X \setminus \bigsqcup_{i=1}^nS_iV_i \prec V.
\]
Moreover, find a small enough $\tau > 0$ such that
\[
X \setminus \bigsqcup_{i=1}^nS_iV_i^{-\tau} \prec V.
\]
Set $Y = \bigsqcup \overline{V_i^{-\tau/2}}$. Then, by using \eqref{eq: counting ineq} and applying Corollary \ref{cor: subeq of elements} on the set $Y$, we have
\begin{multline*}
A \prec \sum_{i=1}^n \sum_{g \in S_i} \left[g\overline{V_i^{-2\tau/3}} \cap A\right] + X \setminus \bigsqcup_{i=1}^nS_iV_i^{-\tau} \\ \prec \sum_{i=1}^n \sum_{g \in S_i} \left[\overline{V_i^{-2\tau/3}} \cap g^{-1}A\right] + [V] \\ \prec \sum_{i=1}^n \sum_{g \in S_i} \left[V_i^{-\tau/2} \cap g^{-1}B^\prime\right] + [V] \prec [B^\prime] + [V] \prec B.
\end{multline*}

\end{proof}

\begin{definition}
We say that an action $\GonX$ has \emph{URPC} (Uniform Rokhlin Property and comparison) if either of the equivalent properties in Theorem \ref{thm: URP+comp equiv} hold.
\end{definition}

\begin{remark}
Similarly to how URP is a direct weakening of almost finiteness in measure, URPC is a direct weakening of a property known as \emph{almost finiteness} (see \cite{Ker20}). The latter holds automatically for many amenable groups acting freely on finite-dimensional spaces (see, for instance, \cite{Nar24}). In particular, the corollary below is widely applicable.
\end{remark}

\begin{corollary}\label{cor: URPC passes to ext}
Suppose that an action has URPC. Then any extension of this action also has URPC.
\end{corollary}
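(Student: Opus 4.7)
The plan is to verify condition (iv) of Theorem \ref{thm: URP+comp equiv} for the extension. Let $\pi: Y \to X$ be a factor map with $\GonX$ having URPC. The core observation is that subequivalence \emph{pulls back} along $\pi$: if $A \subset X$ is closed and $B \subset X$ is open with $A \prec B$ implemented by a cover $\{U_j\}$ and elements $\{g_j\}$, then $\{\pi^{-1}(U_j)\}$ together with the same $\{g_j\}$ witness $\pi^{-1}(A) \prec \pi^{-1}(B)$ in $Y$, since preimages commute with the group action and preserve disjointness (the open-open case reduces to this by noting that the $\pi$-image of a closed subset of $\pi^{-1}(A)$ is compact, hence closed in $A$). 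In particular, $Y \setminus \pi^{-1}(E) = \pi^{-1}(X \setminus E) \prec \pi^{-1}(U)$ whenever $X \setminus E \prec U$ in $X$.

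First I would establish the marker property for $\GonY$: given finite $L \subset G$, pick an $L$-free marker set $U \subset X$ using the marker property on $X$ (Lemma \ref{lem: URP implies marker}); then $\pi^{-1}(U)$ is open, $L$-free (as preimages preserve disjointness of orbits), and satisfies $G\pi^{-1}(U) = \pi^{-1}(GU) = Y$.

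Next, fix $K \finsub G$, $\eps > 0$, and a marker set $U_Y \subset Y$. Apply Lemma \ref{Lem: mult L-free subeq} to $U_Y$ with $k = 1$ to obtain a finite $L \subset G$ such that every $L$-free open subset of $Y$ is subequivalent to $U_Y$. Choose an $L$-free marker set $U_X \subset X$; then $\pi^{-1}(U_X)$ is $L$-free and open in $Y$, hence $\pi^{-1}(U_X) \prec U_Y$. Apply formulation (iv) of Theorem \ref{thm: URP+comp equiv} to $\GonX$ with parameters $K, \eps, U_X$ to obtain a castle $\{(S_i, V_i)\}_{i=1}^n$ in $X$ with $(K, \eps)$-invariant shapes and
\[
X \setminus \bigsqcup_{i=1}^n S_i V_i \prec U_X.
\]
Setting $W_i = \pi^{-1}(V_i)$, the collection $\{(S_i, W_i)\}_{i=1}^n$ is a castle in $Y$ with the same $(K, \eps)$-invariant shapes (disjointness of levels and footprints pulls back), and
\[
Y \setminus \bigsqcup_{i=1}^n S_i W_i \;=\; \pi^{-1}\!\brak{X \setminus \bigsqcup_{i=1}^n S_i V_i} \;\prec\; \pi^{-1}(U_X) \;\prec\; U_Y,
\]
which verifies condition (iv) for $\GonY$.

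There is no genuine obstacle here: the crux is the routine verification that dynamical subequivalence pulls back through factor maps, while the actual leverage comes from Lemma \ref{Lem: mult L-free subeq}, which allows an arbitrary marker set $U_Y$ in the extension to absorb the pullback of a sufficiently free marker set from the base. This bootstrap is what lets us transfer the castle construction from $X$ to $Y$ without any mean dimensional considerations.
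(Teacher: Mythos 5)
Your proof is correct, but it takes a slightly longer route than the paper's. The paper's proof is a one-liner: it pulls back item (iii) of Theorem \ref{thm: URP+comp equiv} rather than item (iv). The point is that (iii) is \emph{intrinsic} to the castle --- the subequivalence $X \setminus \bigsqcup_i S_iV_i \prec \sum_i \lfloor\eps\abs{S_i}\rfloor[V_i]$ refers only to the castle's own remainder and bases, so taking preimages of the $V_i$ and of the witnessing cover immediately produces a witness of (iii) in the extension, with nothing left to check. Your choice of (iv) introduces external data (an arbitrary marker set $U_Y$ in the extension $Y$) that does not come from the base system, and this is exactly what forces your extra bootstrap: establishing the marker property for $Y$ by pullback, invoking Lemma \ref{Lem: mult L-free subeq} to find $L$ so that $L$-free sets absorb into $U_Y$, pulling back an $L$-free marker set from $X$, and chaining $\pi^{-1}(U_X) \prec U_Y$. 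All of these steps are sound --- in particular your observation that subequivalence pulls back along factor maps (including the open--open case via compactness of images of closed sets) is the shared engine of both arguments --- but they are avoidable. The comparison is instructive: among the four equivalent formulations, (iii) is the one designed to be permanence-friendly precisely because it quantifies over no auxiliary sets in the ambient space.
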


\begin{proof}
Taking preimage under the quotient map shows that item (iii) in Theorem \ref{thm: URP+comp equiv} passes to extensions.
\end{proof}

We finish the section by introducing several auxiliary definitions that will be useful later.

\begin{definition}\label{def: URPC witness}
Suppose that the action $\GonX$ has URPC and let $K \subset G$ be finite and $\eps > 0$. We say that the data $\cW = \brak{\{S_i\}_{i=1}^n, \{V_i\}_{i=1}^n, \{g_j\}_{j=1}^m, \{U_j\}_{j=1}^m}$ is a \emph{$(K, \eps)$-URPC witness} if
\begin{itemize}
    \item $\{(S_i, V_i)\}_{i=1}^n$ is a castle and satisfies item (iii) in Theorem \ref{thm: URP+comp equiv},
    \item $\{U_j\}_{j=1}^m$ is an open cover of $X \setminus \bigsqcup_{i=1}^nS_iV_i$ and $g_j$ are elements of G,
    \item there is a partition of the set $\{1, 2, \ldots, m\}$ into (possibly empty) subsets $J_{i, k}$, where $i \in \{1, 2, \ldots, n\}$ and $k \in \{1, 2, \ldots, \lfloor \eps\abs{S_i}\rfloor\}$ such that $\{g_jU_j\}_{j \in J_{i, k}}$ are pairwise disjoint subsets of $V_i$.
\end{itemize}
In other words, the data $\{(g_j, U_j)\}_{j =1}^m$ witnesses the subequivalence
\[
    X \setminus \bigsqcup_{i=1}^nS_iV_i \prec \sum_{i=1}^n\lfloor \eps\abs{S_i}\rfloor [V_i].
\]
Remark \ref{rem: subeq cover properties} shows that we may always assume that in fact the sets $\{g_j\overline{U_j}\}_{j \in J_{i, k}}$ are pairwise disjoint subsets of $V_i$.

Given such a URPC witness, we define the corresponding \emph{URPC cover} to be the collection 
\[
C(\cW) = \{gV_i \colon 1 \le i \le n, g \in S_i\} \sqcup \{U_j\}_{j=1}^m.
\]
Note that it is indeed a cover of $X$. Furthermore, we define the \emph{encoding element} of this cover in the type semigroup to be 
\[
E(\cW) = \sum_{i=1}^n[V_i] + \sum_{j=1}^m[U_j].
\]
\end{definition}

\begin{remark}\label{rem: URPC cover stable}
Similar to Lemma \ref{lem: MS shrink}, URPC witnesses are stable under small shrinking of sets. More precisely, let $\cW = \brak{\{S_i\}_{i=1}^n, \{V_i\}_{i=1}^n, \{g_j\}_{j=1}^m, \{U_j\}_{j=1}^m}$ be a $(K, \eps)$-URPC witness. Then there exists some $\delta > 0$ such that for any opens sets $V_i^{-\delta} \subset V_i^\prime \subset V_i$ and any open sets $U_j^{-\delta} \subset U_j^\prime \subset U_j$ the data $\cW^\prime = \brak{\{S_i\}_{i=1}^n, \{V_i^\prime\}_{i=1}^n, \{g_j\}_{j=1}^m, \{U_j^\prime\}_{j=1}^m}$ is also a $(K, \eps)$-URPC witness.
\end{remark}

\section{Property FCSB}
\label{sec: FCSB}

In this section we introduce and study properties FCSB and FCSB in measure. We remark that most of the statements have two versions: in density, corresponding to URP, and group-combinatorial, corresponding to URPC. We will always give a proof for the second (more difficult) version. The density statements can be obtained by using \cite[Lemma 3.3]{Ker20} and substituting the condition ``$\cdot \prec V$'' with ``$\uD(\cdot) < \eps$''.

\subsection{The definition and the relation to the marker property}

\begin{definition}\label{def: FCSB}
An action $G \act X$ has \emph{property FCSB (free covers with staggered boundaries)} if it has the marker property and for every finite $F \subset G$ and any marker set $V$ there is a collection of $F$-free open sets $\{U_i\}_{i=1}^n$ with
\[
\sum_{i=1}^n \sqbrak{\partial U_i} + \left[X \setminus \bigcup_{i=1}^nU_i\right] \prec V.
\]
Note that here we use the convention introduced in Definition \ref{def: closed set in type semigroup}.

An action $G \act X$ has \emph{property FCSB in measure} if for every finite $F \subset G$ and any $\eps > 0$ there is a collection of $F$-free open sets $\{U_i\}_{i=1}^n$ such that
\[
\sum_{i=1}^n \mu\brak{\partial U_i} + \mu\brak{X \setminus \bigcup_{i=1}^nU_i} <\eps
\]
for any $\mu \in M_G(X)$. It is clear that property FCSB in measure is a direct weakening of property FCSB.
\end{definition}

Note that in the definition above, we do not require that the sets $U_i$ cover $X$ (and instead ask that the remainder $X \setminus \bigcup_{i=1}^nU_i$ is small in an appropriate sense). This is purely a matter of choice, as the following remark shows. The only reason we do not impose this condition is that it will be slightly more convenient for us to allow for a remainder when proving Theorem \ref{thm: alphabet reduction}.

\begin{remark}\label{rem: FCSB can ask cover}
In the definition of property FCSB (in measure) one can equivalently ask for the collection $\{U_i\}_{i=1}^n$ to be a cover of $X$.
\end{remark}

\begin{proof}
We will prove the statement for property FCSB, with the proof for property FCSB in measure being essentially the same. 

Let $F \subset G$ be finite and let $V$ be a marker set. Choose an arbitrary $F$-free cover $\{V_j\}_{j=1}^m$ of X and use Lemma \ref{Lem: mult L-free subeq} to choose a marker set $U$ such that
\[
m[U] \prec [V].
\]
By property FCSB, there is a collection $\{U_i\}_{i=1}^n$ and some $\delta > 0$ such that
\[
\sum_{i=1}^n \sqbrak{\partial U_i} + \left[\brak{X \setminus \bigcup_{i=1}^nU_i}^\delta\right] \prec U.
\]
Consider the collection of open sets 
\[
\{U_i\}_{i=1}^n \sqcup \left\{V_j \cap \brak{X \setminus \bigcup_{i=1}^nU_i}^{\delta/2}\right\}_{j=1}^m.
\]
Clearly, it is a cover of $X$ consisting of $F$-free sets and
\[
\sum_{i=1}^n \sqbrak{\partial U_i} + \sum_{j=1}^m \sqbrak{\partial \brak{V_j \cap \brak{X \setminus \bigcup_{i=1}^nU_i}^{\delta/2}}} \prec \sum_{i=1}^n \sqbrak{\partial U_i} + m\left[\brak{X \setminus \bigcup_{i=1}^nU_i}^\delta\right] \prec m[U] \prec V.
\]
\end{proof}

Recall that a group $G$ is said to be \emph{FC} if every conjugacy class of $G$ is finite. Any abelian group is FC.

\begin{proposition}\label{prop: FC marker has FCSB}
If $G$ is FC and an action $\GonX$ has the marker property then it has property FCSB. 
\end{proposition}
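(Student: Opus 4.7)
The plan is to construct the $F$-free collection required by property FCSB as $G$-translates of a single carefully chosen marker set, with the FC hypothesis used to control freeness of the translates.

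Given $F \finsub G$ and a marker set $V$, let $E := F^{-1}F$ and $E^G := \bigcup_{h \in G} hEh^{-1}$. The FC condition makes $E^G$ finite, and its conjugation-invariance gives the key observation: any $E^G$-free open set $W$ has every translate $gW$ equal to a $gE^Gg^{-1} = E^G$-free set, hence in particular $F$-free. Using this, I would apply the marker property to produce an $L$-free marker $W$ with $L \supset E^G$ (the size of $L$ to be fixed below), use Lemma \ref{lem: MS shrink} to arrange that $\overline W$ is also $L$-free, and pick a finite $M \ni e$ with $MW = X$ by compactness. Then $\{gW\}_{g \in M}$ is an $F$-free open cover of $X$, the remainder term vanishes, and the sum of boundaries is
\[
\sum_{g \in M}[\partial(gW)] \;=\; |M|\,[\partial W].
\]

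The proof then reduces to verifying the type-semigroup subequivalence $|M|\,[\partial W] \prec [V]$, and this is the main obstacle. A naive application of Lemma \ref{Lem: mult L-free subeq} to $W$ itself fails, because the density estimates $1/|M| \le \lden(W) \le \uden(W) \le 1/|L|$ force $|M| \ge |L|$, while the lemma demands $|L| \gtrsim |M|\cdot|M_V|$ to yield $|M|\,[W] \prec [V]$ — incompatible whenever $|M_V| \ge 2$. The way I would break this circularity is by applying Lemma \ref{Lem: mult L-free subeq} to the boundary $\partial W$ in isolation rather than to $W$: $\partial W$ is a closed subset of the $L$-free set $\overline W$, so it is $L$-free, and by the convention of Definition \ref{def: closed set in type semigroup} it has $L$-free open neighborhoods. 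The further insight I would use is that, by choosing $W$ as a shrinkage $W_0^{-\delta}$ of a larger $L$-free marker $W_0$ for an appropriate $\delta$ from a continuous family, the resulting boundary $\partial W$ becomes a thin shell inside $\overline{W_0}$; combining this with Lemma \ref{lem: L-free subeq} applied to a sufficiently small sub-marker of $V$ should allow one to dominate $[\partial W]$ by an arbitrarily small element in the type semigroup, small enough that $|M|$ copies of it remain $\prec [V]$.

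Making this ``boundary concentration via generic shrinking" step quantitatively rigorous — that is, identifying the correct way to measure and exploit the thinness of $\partial W$ in the type semigroup, which cannot be done from density considerations alone — is the main technical content of the proof. Once this step is in place, the cover $\{gW\}_{g \in M}$ witnesses property FCSB for the given $F$ and $V$, and Remark \ref{rem: FCSB can ask cover} absorbs any remaining technicality about demanding an exact cover rather than an approximate one.
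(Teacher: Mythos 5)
Your setup (conjugation\hyphenation{-}invariant enlargement of $F$, translates of a single $L$-free marker staying $F$-free) matches the paper's, and you correctly locate the real difficulty: bounding the sum of the boundaries. But the step you defer --- ``boundary concentration via generic shrinking'' --- is a genuine gap, not a technicality, and the mechanism you sketch does not exist in the paper's toolbox. Metric thinness of $\partial W$ gives no control whatsoever on $[\partial W]$ under $\prec$: the only tools available for dominating a closed set by a small marker are Lemma \ref{Lem: mult L-free subeq} and Lemma \ref{lem: L-free subeq}, and both require an open neighbourhood of the set to be $L'$-free for an $L'$ whose size grows with the multiplicity $|M|$ you need to absorb. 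The only neighbourhoods of $\partial W$ you can certify as $L'$-free are subsets of neighbourhoods of $\overline{W}$, so their freeness is capped by the same $L$ that already constrains $W$ --- and then the density circularity you yourself identified for $W$ ($1/|M| \le \lD(W) \le 1/|L'|$ versus $|L'| \gtrsim |M|\cdot|M_V|$) reappears verbatim for $|M|[\partial W] \prec [V]$. In short: as long as all $|M|$ sets in your cover are translates of one set, their boundaries are all translates of one closed set, the sum is forced to be $|M|[\partial W]$, and there is no way to pay for the factor $|M|$.

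The paper escapes precisely by \emph{not} using translates of a single set. It takes a bump function $\phi$ with $\supp\phi \subset U$ and $\phi \equiv 1$ on $U^{-\delta}$, and covers $X$ by the sets $U_i = g_i\{\phi > 1/i\}$, i.e.\ by translates of \emph{different level sets}. Each $U_i$ is still $L$-free by conjugation invariance, but now $g_i^{-1}(\partial U_i) \subset \{\phi = 1/i\}$, and these level surfaces are pairwise disjoint subsets of the single $L$-free set $U$. Hence $\sum_i [\partial U_i] = \bigl[\bigsqcup_i g_i^{-1}\partial U_i\bigr] \prec U \prec V$ by one application of Lemma \ref{lem: L-free subeq} (with $L$ chosen from the start to contain all conjugates of $F \cup M$, where $M^{-1}V = X$); no multiplication by the covering number ever occurs. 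This staggering of the boundaries is exactly what the acronym FCSB refers to, and it is the idea missing from your argument.
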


\begin{proof}
Let $F \subset G$ be finite and let $V$ be a marker set. There exists some finite set $M \subset G$ such that that $M^{-1}V = X$. Since $G$ is FC the set 
\[
L = \bigcup_{g \in G}g(F \cup M)g^{-1}
\]
is finite and conjugation invariant. Let $U$ be an $L$-free marker set. By Lemma \ref{lem: MS shrink} there is $\delta > 0$ such that $U^{-\delta}$ is also an $L$-free marker set. Choose a bump function $\phi : X \to [0, 1]$ with $\supp\phi \subset U$ and $\phi = 1$ on $U^{-\delta}$. Enumerate all elements of $G = \{g_1, g_2, \ldots\}$ and define 
\[
U_i = g_i\{x \in X \mid \phi(x) > 1/i\} = \{x \in X \mid \phi(g_i^{-1}x) > 1/i\}.
\]
By conjugation invariance of $L$, the sets $U_i$ are $L$-free and cover $X$. By compactness, there is some $n \in \Nb$ such that $\{U_i\}_{i=1}^n$ is also a cover of $X$. Moreover, $g_i^{-1}\brak{\partial U_i} \subset \{x \in X \mid \phi(x) = 1/i\}$ are disjoint subsets of $U$. Thus, using Lemma \ref{lem: L-free subeq},
\[
\sum_{i=1}^n [\partial U_i] \prec U \prec V.
\]
\end{proof}

We remark that even if the group $G$ is not abelian, if an element $g \in G$ lies in the center $Z(G)$, we still have the equality
\[
gFg^{-1} = F.
\]
With this in mind, a slight modification of the proof above shows the following.

\begin{proposition}
Let $Z(G)$ be the center of the group $G$ and suppose that the restricted action $Z(G) \act X$ is minimal. Then $G \act X$ has property FCSB.
\end{proposition}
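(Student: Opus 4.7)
The plan is to mirror the proof of Proposition \ref{prop: FC marker has FCSB}, replacing the conjugation-closed set $L$ by translates taken only from the center. The crucial observation is that if $U \subset X$ is $F$-free and $z \in Z(G)$, then $zU$ is again $F$-free: for any $f \in F$ we have $fzU = zfU$, so the translates $\{fzU\}_{f \in F}$ are the image under $z$ of the pairwise disjoint family $\{fU\}_{f \in F}$. Thus, unlike in the FC case, we do not need to enlarge $F$ to a conjugation-invariant set --- centrality alone preserves freeness, which is exactly why the FC hypothesis can be dropped here.

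First I would observe that minimality of $Z(G) \act X$ forces $\GonX$ to be minimal (any nonempty $G$-invariant closed set is in particular $Z(G)$-invariant), so $\GonX$ is a free minimal action of an amenable group and hence has the marker property. Fix a finite $F \subset G$ and a marker set $V$, choose a finite $M \subset G$ with $M^{-1}V = X$, and set $L = F \cup M$. Using the marker property together with Lemma \ref{lem: MS shrink}, I would pick an $L$-free marker set $U$ and $\delta > 0$ such that $U^{-\delta}$ is still an $L$-free marker set with $\overline{U^{-\delta}} \subset U$. Choose a continuous bump function $\phi \colon X \to [0, 1]$ with $\supp \phi \subset U$ and $\phi \equiv 1$ on $U^{-\delta}$, enumerate $Z(G) = \{z_1, z_2, \ldots\}$, and define
\[
U_i = z_i\{x \in X \mid \phi(x) > 1/i\} = \{x \in X \mid \phi(z_i^{-1}x) > 1/i\}.
\]

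Each $U_i$ is $F$-free by the centrality observation. Minimality of $Z(G) \act X$ yields $Z(G) U^{-\delta} = X$, so for every $x \in X$ there exists some $i \geq 2$ with $z_i^{-1}x \in U^{-\delta}$, whence $\phi(z_i^{-1}x) = 1 > 1/i$; therefore $\{U_i\}$ is an open cover of $X$, and by compactness so is some finite subfamily $\{U_i\}_{i=1}^n$, making the remainder term in the definition of FCSB vanish (cf.\ Remark \ref{rem: FCSB can ask cover}). The sets $z_i^{-1}\partial U_i$ are contained in the level sets $\{x \in X \mid \phi(x) = 1/i\}$ and are therefore pairwise disjoint subsets of $U$; since $U$ is in particular $M$-free with $M^{-1}V = X$, Lemma \ref{lem: L-free subeq} yields
\[
\sum_{i=1}^n [\partial U_i] = \sum_{i=1}^n [z_i^{-1}\partial U_i] \prec [U] \prec [V],
\]
which is the desired estimate. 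I do not anticipate any substantive obstacle: the argument is a direct transcription of the FC case, and the only point demanding care is verifying $F$-freeness after each translation, which is precisely what centrality delivers; minimality of $Z(G) \act X$ is then used only to ensure that the central translates of $U^{-\delta}$ still cover $X$.
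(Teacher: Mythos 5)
Your proof is correct and is essentially the argument the paper intends: the paper omits the proof of this proposition, stating only that it is a slight modification of the proof of Proposition \ref{prop: FC marker has FCSB}, and your writeup is exactly that modification (centrality replaces conjugation-invariance to preserve $F$-freeness of the translates, and minimality of $Z(G) \act X$ replaces the marker property to make the central translates of $U^{-\delta}$ cover $X$).
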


\begin{proposition}\label{prop: URPC implies FCSB}
If an action $G \act X$ has URPC then it has property FCSB. If it has URP then it has property FCSB in measure.
\end{proposition}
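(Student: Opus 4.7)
The plan is to assemble the $F$-free cover from individual levels of a castle produced by URPC. Given $F$ and a marker set $V$, I would first use Lemma \ref{Lem: mult L-free subeq} to obtain a finite $L \subset G$ such that $3[W] \prec V$ whenever $W$ is an $L$-free open set, and then fix an $L$-free marker $V'$ together with $\eps > 0$ strictly smaller than $\lD(V')$. Applying item (iv) of Theorem \ref{thm: URP+comp equiv} with $K = F^{-1} \cup \{e\}$ and marker $V'$ yields a castle $\{(S_i, V_i)\}_{i=1}^n$ with $(K, \eps)$-invariant shapes and $X \setminus \bigsqcup_i S_i V_i \prec V'$; a routine shrinking via Lemma \ref{lem: MS shrink} lets me further assume that the level closures $g\overline{V_i}$, $g \in S_i$, are pairwise disjoint across all towers.

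Set $S_i^* = \{g \in S_i : Fg \subset S_i\} = S_i \cap \bigcap_{f \in F} f^{-1} S_i$; the $(K, \eps)$-invariance then gives $|S_i^*| \ge (1-\eps)|S_i|$, and each $gV_i$ with $g \in S_i^*$ is $F$-free because $\{fgV_i\}_{f \in F}$ is a subfamily of the pairwise disjoint family $\{hV_i\}_{h \in S_i}$. The proposed FCSB collection will be $\{gV_i : 1 \le i \le n,\, g \in S_i^*\}$. Using that the translates $g\partial V_i$ are pairwise disjoint, the boundary contribution telescopes in the type semigroup as
\[
\sum_{i,\, g \in S_i^*}[\partial(gV_i)] = \sum_i |S_i^*|[\partial V_i] \le \sum_i |S_i|[\partial V_i] = \Bigl[\partial \bigsqcup_i S_i V_i\Bigr] \le \Bigl[X \setminus \bigsqcup_i S_i V_i\Bigr] \prec V',
\]
the last two inequalities using that the boundary of an open set lies in its closed complement and the URPC choice of castle. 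For the uncovered region, $X \setminus \bigsqcup_{i, g \in S_i^*} gV_i$ decomposes as $(X \setminus \bigsqcup_i S_i V_i) \cup Y$ with $Y := \bigsqcup_i (S_i \setminus S_i^*) V_i$ open and satisfying $\mu(Y) < \eps < \lD(V') \le \mu(V')$ for every $\mu \in M_G(X)$; the first piece is $\prec V'$ by URPC and $Y \prec V'$ by comparison, so the total is $\prec 3[V'] \prec V$, which is property FCSB.

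The second assertion follows from the identical construction by interpreting each $\prec V'$ as an upper-density bound, as indicated at the start of the section: URP directly supplies a castle with $\uD(X \setminus \bigsqcup_i S_i V_i) < \eps$, and the parallel $\mu$-estimates bound $\sum \mu(\partial U) + \mu(X \setminus \bigcup U)$ by $3\eps$ uniformly over $\mu \in M_G(X)$, with no comparison needed. I expect the main obstacle to be the careful bookkeeping of closures and neighbourhoods demanded by Definition \ref{def: closed set in type semigroup} in order to justify the chain of $\le$'s in the type semigroup; the key structural insight, however, is that the ``staggering'' of boundaries emerges automatically from the castle's disjoint-footprint property, so that the sum of level boundaries telescopes into a single copy of the boundary of the castle footprint, which itself sits inside the small remainder.
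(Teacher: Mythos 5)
Your argument is correct and follows the paper's proof almost verbatim: take a URPC castle, pass to the $F$-interiors $S_i^*$ of the shapes so the levels become $F$-free, observe that the (pairwise disjoint) level boundaries sit inside the small remainder, and absorb the discarded $\eps$-fraction of levels into the marker set. The only deviation is that you bound $Y=\bigsqcup_i (S_i\setminus S_i^*)V_i$ by invoking comparison together with the estimate $\mu(Y)<\eps<\lD(V')$, whereas the paper does this combinatorially via Lemma \ref{lem: eps levels subeq marker} (obtaining $\sum_i\lceil\eps|S_i|\rceil[V_i]\prec U$ from the marker property alone); both are valid here, and your measure-theoretic variant correctly degenerates to the comparison-free estimate needed for the FCSB-in-measure statement.
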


\begin{proof}
We again prove the statement for property FCSB and URPC, with the proof for property FCSB in measure and URP being similar. 

By Lemma \ref{lem: URP implies marker}, if an action has URPC then it has the marker property. Let $F \subset G$ be finite and let $V$ be a marker set. Choose a marker set $U$ such that
\[
3[U] \prec V.
\]
Using URPC and Lemma \ref{lem: eps levels subeq marker}, find an $\eps > 0$ and a castle $\{(S_i, V_i)\}_{i=1}^n$ with $(F, \eps)$-invariant shapes such that 
\[
\sum_{i=1}^n\lceil\eps\abs{S_i}\rceil [V_i] \prec U \quad \mbox{and} \quad X \setminus \bigsqcup_{i=1}^nS_iV_i \prec U.
\]
By Remark \ref{rem: URPC cover stable} we can additionally assume that the closures $g\overline{V_i}, g \in S_i$ of all the levels of the castle are pairwise disjoint and therefore their boundaries $\partial \brak{gV_i}$ are pairwise disjoint subsets of the remainder $X \setminus \bigsqcup_{i=1}^nS_iV_i$. For each shape $S_i$ define
\[
S_i^\prime = \{g \in S_i \mid Fg \subset S_i\}
\]
and note that $\abs{S_i^\prime} > (1-\eps)\abs{S_i}$ by $(F, \eps)$-invariance. Consider the following collection of open sets
\[
\{gV_i \mid 1 \le i \le n, g \in S_i^\prime\}.
\]
Then each of them is $F$-free and 
\[
\sum_{i=1}^n \sum_{g \in S_i^\prime} \sqbrak{\partial \brak{gV_i}} + \left[X \setminus \bigcup_{i=1}^n S_i^\prime V_i\right] \prec \sqbrak{X \setminus \bigsqcup_{i=1}^nS_iV_i} + \sum_{i=1}^n\lceil\eps\abs{S_i}\rceil [V_i] + \sqbrak{X \setminus \bigsqcup_{i=1}^nS_iV_i} \prec 3[U] \prec V.
\]
\end{proof}

\subsection{Cover reduction}

The goal of this section is to show that the number of sets $U_i$ in the Definition \ref{def: FCSB} can be made to only depend on the set $F$. We begin with a lemma that reduces a given cover in a ``single direction'' --- that is, for an action of a cyclic group generated by some element of $G$. We remark that the subequivalence below is with respect to the action of the cyclic group and thus also holds for the original action $\GonX$.

\begin{lemma}\label{lem: alphabet reduction}
Let $\langle g \rangle \act X$ be an action of a cyclic group. Let $\{U_i\}_{i=1}^n$ be a collection of open $\{e, g\}$-free sets and let $\delta > 0$. Then there exist open $\{e, g\}$-free sets $V_1, V_2$, and $V_3$ such that their closures $\overline{V_1}, \overline{V_2},$ and $\overline{V_3}$ are pairwise disjoint and
\[
[\partial V_1] + [\partial V_2] + [\partial V_3] + \sqbrak{X \setminus (V_1 \cup V_2 \cup V_3)} \prec 8\brak{\sum_{i=1}^n \sqbrak{\brak{\partial U_i}^\delta} + \sqbrak{X \setminus \bigcup_{i=1}^nU_i^{-\delta}}}.
\]
\end{lemma}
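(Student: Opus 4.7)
The plan is to produce $V_1,V_2,V_3$ by a continuous cyclic $3$-coloring along the $\langle g\rangle$-orbits inside the covered region $W:=\bigcup_{i=1}^{n}U_i^{-\delta}$. By hypothesis, the closed remainder $X\setminus W$ and each neighborhood $(\partial U_i)^{\delta}$ will be allowed to appear with multiplicity $8$ on the right-hand side, so the task is only to show that three open color classes suffice and that their boundaries concentrate near these ``bad'' sets.

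The core construction is a continuous ``height'' function. I would fix a Urysohn cutoff $\eta:X\to[0,1]$ equal to $0$ on a slight inner shrink of $W$ and to $1$ outside a slightly larger shrink, and for a suitably large integer $N$ set
\[
H(x)=\sum_{k=0}^{N}\prod_{j=0}^{k}\bigl(1-\eta(g^{-j}x)\bigr).
\]
A direct manipulation of this formula shows that, for $x$ in the inner shrink of $W$,
\[
H(gx)=H(x)+1-\prod_{j=0}^{N}\bigl(1-\eta(g^{-j}x)\bigr),
\]
so the ``$+1$ shift rule'' $H(gx)=H(x)+1$ holds provided at least one of $\{x,g^{-1}x,\ldots,g^{-N}x\}$ lies outside the inner shrink. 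To prevent $H$ from saturating (which would break the shift rule), I would first intersect $W$ with a sufficiently sparse marker set, using Lemma \ref{Lem: mult L-free subeq} to bound the extra contribution to the remainder; this ensures no orbit has a run of more than $N$ consecutive inner-shrink points.

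Now pick three pairwise disjoint open arcs $I_1,I_2,I_3\subset\Rb/3\Zb$ centered at $0,1,2\in\Rb/3\Zb$, each of length slightly less than $1$, and set
\[
V_j=\{x\in W\;:\;H(x)\bmod 3\in I_j\},\qquad j=1,2,3.
\]
These are open; their closures are pairwise disjoint because $\overline{I_1},\overline{I_2},\overline{I_3}$ are; and the shift rule gives $V_j\cap gV_j=\emptyset$, i.e., $\{e,g\}$-freeness. A point $x$ can contribute to $\partial V_j$ or to $X\setminus(V_1\cup V_2\cup V_3)$ only if one of the following holds: $x$ is close to $X\setminus W$; $H(x)\bmod 3$ sits in the narrow complementary gap of $I_1\cup I_2\cup I_3$; or the shift rule fails at $x$ or at $g^{\pm 1}x$. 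Each such obstruction forces $x$ into some $(\partial U_i)^{\delta}$ or into a small neighborhood of $X\setminus W$, possibly after translation by $g^{\pm 1}$, so in particular into one of the RHS pieces.

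I would close the argument by invoking Corollary \ref{cor: subeq of elements} with suitably small enlargements of the LHS closed sets placed against open shrinks of the RHS pieces listed with multiplicity $8$. Pointwise the LHS contributes at most $3$ boundary indicators plus $1$ remainder indicator, and the previous paragraph matches each such contribution to an RHS indicator at $x$ itself or at $g^{\pm 1}x$; packaging these via the translation invariance $[g^{\pm 1}A]=[A]$ in the type semigroup then yields the claimed factor $8$. The main obstacle I anticipate is the clean bookkeeping of these pointwise multiplicities --- in particular, choosing the inner/outer shrink parameters for $\eta$ and the arcs $I_j$ mutually compatibly so that no unintended double counting occurs and the final constant does not exceed $8$.
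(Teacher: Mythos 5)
Your cyclic ``height function'' scheme is a genuinely different mechanism from the paper's (which builds, for \emph{each} $U_i$, a bump function $\phi_i$ with $\{0<\phi_i<1\}\subset(\partial U_i)^{\delta}$, forms a nested family of translated level-set covers, and runs a greedy $3$-coloring on the iterated join), but it has a fatal gap at exactly the point you flag as a ``main obstacle''. The $+1$ shift rule requires $\prod_{j=0}^{N}(1-\eta(g^{-j}x))=0$, i.e.\ the backward orbit segment of length $N+1$ must leave the region where $\eta<1$. But along a $\langle g\rangle$-orbit the complement of $W=\bigcup_i U_i^{-\delta}$ can be empty: $\{e,g\}$-freeness of each $U_i$ only forbids two \emph{consecutive} orbit points lying in the \emph{same} $U_i$, so every point of an orbit may lie in some $U_i$ with the index varying. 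Hence runs inside the inner shrink are unbounded and $H$ must saturate at $N+1$; at a saturation point $H(gx)\approx H(x)$, so $x$ and $gx$ land in the same arc $I_j$ and the $V_j$ are not $\{e,g\}$-free. Your proposed repair --- removing a sparse marker set from $W$ --- is not available: the lemma is stated for an action of the cyclic group $\langle g\rangle$ with no marker property assumed (and none holds in general, even for free $\Zb$-actions), Lemma \ref{Lem: mult L-free subeq} therefore does not apply, and even a marker set for the ambient $G$-action gives no bound on gaps along $\langle g\rangle$-orbits. Moreover the right-hand side of the lemma is a fixed quantity, so an extra marker set cannot simply be absorbed into the remainder.

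Even setting saturation aside, the bookkeeping cannot produce the constant $8$. The non-integer locus of $H$ (where $H\bmod 3$ can fall in the gaps between the $I_j$, and where $\partial V_j$ lives) is contained in $\bigcup_{j=0}^{N}g^{j}\{0<\eta<1\}$, a union of $N+1$ translates; in the type semigroup this is only controlled by $(N+1)\sqbrak{\{0<\eta<1\}}$, so the multiplicity grows with $N$ rather than staying at $8$. Relatedly, $H$ depends only on $W$ and never sees the individual sets $U_i$, so its level sets have no reason to lie in $\bigcup_i(\partial U_i)^{\delta}$; the containment of all error terms in the prescribed boundary neighbourhoods is precisely what the paper's construction engineers via \eqref{eq: bump func and boundary neighb} and the thresholds $1/(\abs{m}+2)$, $1/(\abs{m}+3)$, which guarantee that each cell of the join meets at most one previously coloured cell in each of the two directions, so that three colours suffice without any global height function.
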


\begin{proof}
For each $i=1 , \ldots, n$ choose a bump function $\phi_i$ which is supported on $U_i$ and such that $\phi_i \equiv 1$ on $U_i \setminus \brak{\partial U_i}^\delta$. In particular, we have that
\begin{equation}\label{eq: bump func and boundary neighb}
\left\{x \mid 0 < \phi_i(x) < 1\right\} \subset \brak{\partial U_i}^\delta.
\end{equation}
Set
\[
U_i^\prime = \left\{x \mid \phi_i(x) > \frac{1}{2}\right\}
\]
and note that $U_i^{-\delta} \subset U_i^\prime \subset U_i$. For each $m \in \Zb$ set $\cU_i^m$ to be a collection consisting of two open sets
\[
g^m\left\{x \mid \phi_i(x) > \frac{1}{\abs{m}+2}\right\} \ \mbox{and} \ g^m\left\{x \mid \phi_i(x) < \frac{1}{\abs{m}+3}\right\}.
\]
We remark that $U_i^\prime \in \cU_i^0$ and that the closures of the two sets in each $\cU_i^m$ are disjoint. An important property of these collections is that if $U \in \cU_i^{m-1}$ then $gU$ intersects exactly one set from the pair $\cU_i^m$ and $g^{-1}U$ intersects exactly one set from the pair $\cU_i^{m-2}$. Moreover, 
\begin{equation}\label{eq: staggered gaps subeq}
g^{-m}\brak{X \setminus \bigcup_{U \in \cU_i^m} U} = \left\{x \mid \phi_i(x) \in \left[\frac{1}{\abs{m}+3}, \frac{1}{\abs{m}+2}\right]\right\}.
\end{equation}
It follows that for any $M \in \Zb_{\ge 0}$, setting 
\[
\cV_i^M = \bigvee_{m=-M}^{M} \cU_i^{m}
\]
and using \eqref{eq: bump func and boundary neighb} and \eqref{eq: staggered gaps subeq}, we have
\[
X \setminus \bigcup_{U \in \cV_i^M}U = \bigcup_{m=-M}^{M} \brak{X \setminus \bigcup_{U \in \cU_i^m} U} \prec 4\sqbrak{\brak{\partial U_i}^\delta}.
\]
Finally, for $k = 1, \ldots, n$, let
\[
\cV_k = \bigvee_{i=1}^k \cV_{i}^{k-i}
\]
and note that $U_k^\prime \in \cU_k^0$, which allows us to define a subcollection of $\cV_k$ by
\[
\cV_k^\prime = \brak{\bigvee_{i=1}^{k-1} \cV_{i}^{k-i}} \vee \{U_k^\prime\} = \left\{U \in \cV_k \mid U \subset U_k^\prime\right\}.
\]
We observe the following easy facts. The first four of them are clear and the last one follows from the property of the collections $\cU_i^m$ mentioned above.
\begin{enumerate}
    \item For any distinct $U, V \in \cV_k^\prime$ the closures $\overline{U}$ and $\overline{V}$ are disjoint.
    \item Any set $U \in \cV_k^\prime$ is $\{e, g\}$-free.
    \item If $U, V$ are sets in $\cV_k^\prime$ then $gU \cap V = \emptyset$.
    \item If $k^\prime < k$ then $\cV_{k}$ refines $\cV_{k^\prime}$.
    \item If $U$ is in $\cV_{k}^\prime$ and $k^\prime < k$ then each of the sets $g^{-1}U$ and $gU$ intersects at most one set from $\cV_{k^\prime}$.
\end{enumerate}

To construct the desired sets $V_1, V_2$, and $V_3$ we now inductively define a coloring function $c(x) \in \{1, 2, 3\}$ for some points $x$ in the space. This procedure has $n$ steps and at step $k$ we ensure that for any $U \in \cV_k^\prime$ the value $c(x)$ is defined for all $x \in U$ and $c(x) = c(y)$ for any $x, y \in U$ (that is, $c$ is constant on $U$). 

Assume we are at step $k$ of the construction and let $U \in \cV_k^\prime$. If $U$ intersects some previously colored set $V \in \cV_{k^\prime}^\prime$ for $k^\prime < k$ then it is fully contained in it by item (iv) above. In this case, the function $c$ is already defined and is constant on $U$. If not, then by item (v) and the inductive assumption, $c$ takes at most one value on $g^{-1}U$ and at most one value on $gU$. Thus, we can choose a color $\sigma \in \{1, 2, 3\}$ that is distinct from either of those and set $c(x) = \sigma$ for all $x \in U$. By items (i), (ii), and (iii) this procedure can be done simultaneously for all sets $U \in \cV_k^\prime$.

Now, for $j = 1, 2, 3$ let
\[
V_j = \{x \mid c(x) = j\}.
\]
Each of these sets by construction is a union of open sets and hence open. Since $c(x) \ne c(gx)$ each of them is $\{e, g\}$-free. Clearly, these sets also have pairwise disjoint closures. Finally,
\[
V_1 \sqcup V_2 \sqcup V_3 = \bigcup_{k=1}^n \bigcup_{U \in \cV_k^\prime} U \supset \brak{\bigcup_{i=1}^nU_i^\prime} \cap \brak{\bigcup_{U \in \cV_n}U}
\]
and therefore
\begin{multline*}
[\partial V_1] + [\partial V_2] + [\partial V_3] + [X \setminus (V_1 \cup V_2 \cup V_3)] \prec 2[X \setminus (V_1 \cup V_2 \cup V_3)] \\ \prec 2\brak{\left[X \setminus \bigcup_{U \in \cV_n}U\right] + \left[X \setminus \bigcup_{i=1}^nU_i^\prime\right]} \prec 8\brak{\sum_{i=1}^n \sqbrak{\brak{\partial U_i}^{\delta}} + \sqbrak{X \setminus \bigcup_{i=1}^nU_i^{-\delta}}}.    
\end{multline*}
\end{proof}

We are now ready to prove the main theorem of this section.

\begin{theorem}\label{thm: alphabet reduction}
In Definition \ref{def: FCSB} we may assume that the number of sets in the collection $\{U_i\}_{i=1}^n$ satisfies
\[
n \le 3^{\abs{F}},
\]
independently of the marker set $V$ (for property FCSB) or the parameter $\eps$ (for property FCSB in measure). We may also assume that the sets $U_i$ have pairwise disjoint closures.
\end{theorem}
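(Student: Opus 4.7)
The proof plan is to iterate Lemma \ref{lem: alphabet reduction} so that the final number of sets depends only on $\abs{F}$. Writing $k = \abs{F}$, the goal is to produce a coloring $c = (c_1, \ldots, c_k) \colon X \to \{1, 2, 3\}^k$ whose $3^k$ level sets $V_\alpha = \{x \colon c(x) = \alpha\}$ are open, $F$-free, have pairwise disjoint closures, and whose combined boundaries and uncovered remainder are controlled by $V$ (respectively $\eps$).

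First I would handle the propagation of the boundary error. Lemma \ref{lem: alphabet reduction} blows the boundary/remainder up by a factor of $8$ per application, so after $k$ iterations the error grows by a factor of $8^k$. To compensate, apply Lemma \ref{Lem: mult L-free subeq} with the marker set $V$ and multiplier $8^k$ to obtain an auxiliary small marker set $W$ with $8^k [W] \prec [V]$, and invoke property FCSB with $W$ in place of $V$ to obtain the initial $F$-free collection $\{U_i\}_{i=1}^n$ with combined boundary and remainder subequivalent to $W$. The measure version is handled identically with $\eps/8^k$ in place of $\eps$. By Remark \ref{rem: FCSB can ask cover} we may also assume that this starting collection covers $X$.

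The core step is the iteration. Enumerate a suitable set of $k$ ``directions'' $h_1, \ldots, h_k$ in $G$ along which freeness must be enforced, and for each $j$ apply Lemma \ref{lem: alphabet reduction} with $g = h_j$ to the current refinement. Each application produces a cellular decomposition whose individual cells are contained in one of the original sets $U_i$ (and therefore remain $F$-free as subsets of $F$-free sets) together with a $3$-coloring $c_j$ in the $h_j$-direction. Passing to the common refinement of all $k$ decompositions and the product coloring $c$, the $3^k$ level sets $V_\alpha$ inherit pairwise disjoint closures from the last application of the cyclic lemma, and the iterated boundary estimate gives combined error $\prec 8^k [W] \prec [V]$.

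The main obstacle I anticipate is verifying that each $V_\alpha$ is genuinely $F$-free from only $k$ colorings, since $F^{-1}F \setminus \{e\}$ can have as many as $\abs{F}^2 - \abs{F}$ elements, and the cyclic lemma with parameter $g$ only guarantees $\{e, g\}$-freeness of its output. The resolution I would pursue exploits that every cell of the final refinement is already $F$-free on its own, so only \emph{conflicts between distinct cells} in a common color class need to be broken; a careful structural analysis of the cell-adjacency graph produced by the iteration (in the spirit of the three-coloring step inside Lemma \ref{lem: alphabet reduction}) should show that $k$ independent $3$-colorings suffice to separate any two $F^{-1}F$-related distinct cells, delivering the claimed bound $n \le 3^{\abs{F}}$.
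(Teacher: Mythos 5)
Your overall architecture is the paper's: fix an auxiliary marker set $U$ in advance via Lemma \ref{Lem: mult L-free subeq} so that a constant multiple of $[U]$ is still $\prec V$, start from an $F$-free collection whose boundary/remainder error is $\prec U$, apply Lemma \ref{lem: alphabet reduction} once per group element (``direction''), and pass to the common refinement, whose level sets automatically have pairwise disjoint closures. One difference of detail: the paper applies the cyclic lemma \emph{in parallel}, each time to the same original collection $\{U_i\}_{i=1}^n$, so the errors merely add up to $8\abs{F}[U]$ instead of compounding to $8^{\abs{F}}$; your iterative version is less efficient but harmless, since any constant depending only on $F$ is absorbed by Lemma \ref{Lem: mult L-free subeq}.

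The gap is the final step, which you correctly flag but do not close, and the closing move you sketch would fail. The product of $\abs{F}$ three-colorings only guarantees that each level set $V_\alpha$ is $\{e,h_j\}$-free for the chosen directions: the coloring built inside Lemma \ref{lem: alphabet reduction} separates a cell only from its $g^{\pm1}$-translates (with three colors the greedy step can avoid exactly two forbidden neighbours), so two cells contained in \emph{different} original sets $U_i \ne U_{i'}$ may receive identical product colors while one meets an $s$-translate of the other for some $s \in F^{-1}F$ outside $\{h_1^{\pm1},\dots,h_k^{\pm1}\}$. No analysis of the cell-adjacency graph can repair this: there are up to $\brak{\abs{F}^2-\abs{F}}/2$ conflict directions to break and only $\abs{F}$ coordinates, each of which breaks one. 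The actual resolution is not structural but essentially definitional, and the same point is implicit in the paper's own proof (which takes the join over $g \in F\setminus\{e\}$ and calls the result $F$-free): either observe that every later use of the theorem (e.g.\ Theorem \ref{thm: FCSB in measure equiv URP}, where $F = F_n^{-1}F_n$ is symmetric, and the coloring spaces $\cC_F$) only requires each $V_\alpha$ to be disjoint from its $g$-translates for $g \in F\setminus\{e\}$ --- exactly what the join delivers with $3^{\abs{F}-1}$ classes --- or, if one insists on $F$-freeness in the literal sense of Definition \ref{def: L-free}, run the construction over the directions $s \in F^{-1}F\setminus\{e\}$, obtaining at most $3^{\abs{F^{-1}F}}$ sets. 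Either way the count depends only on $F$, which, as the statement and Remark \ref{rem: alphabet reduction F + 1} emphasize, is all that is needed.
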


\begin{proof}
Let $F \subset G$ be a finite set containing the identity, let $V$ be a given marker set and use Lemma \ref{Lem: mult L-free subeq} to choose a marker set $U$ such that 
\[
8\abs{F}[U] \prec V.
\]
Let $\{U_i\}_{i=1}^n$ be an $F$-free cover with 
\[
\sum_{i=1}^n [\partial U_i] + \sqbrak{X \setminus \bigcup_{i=1}^nU_i} \prec U.
\]
Choose a small enough $\delta > 0$ so that
\[
\sum_{i=1}^n \sqbrak{\brak{\partial U_i}^\delta} + \sqbrak{X \setminus \bigcup_{i=1}^nU_i^{-\delta}} \prec U.
\]
For each $g \in F \setminus \{e\}$ apply Lemma \ref{lem: alphabet reduction} to obtain a collection $\cU^g = \left\{U_1^g, U_2^g, U_3^g\right\}$ of $\{e, g\}$-free sets with disjoint closures such that
\[
\sqbrak{\partial U_1^g} + \sqbrak{\partial U_2^g} + \sqbrak{\partial U_3^g} + \sqbrak{X \setminus (U_1^g \cup U_2^g \cup U_3^g)} \prec 8\brak{\sum_{i=1}^n \sqbrak{\brak{\partial U_i}^{\delta}} + \sqbrak{X \setminus \bigcup_{i=1}^nU_i^{-\delta}}} \prec 8[U].
\]
Then the collection
\[
\cU = \bigvee_{g \in F \setminus \{e\}}\cU^g
\]
has cardinality $\abs{\cU} \le 3^{\abs{F}-1}$ and consists of $F$-free sets with pairwise disjoint closures. Moreover,
\[
\sum_{W \in \cU} [\partial W] + \sqbrak{X \setminus \bigcup_{W\in \cU}W} \prec 8\abs{F}[U] \prec V,
\]
which finishes the proof.
\end{proof}

\begin{remark}\label{rem: alphabet reduction F + 1}
Theorem \ref{thm: alphabet reduction} above can in fact be further improved to conclude that $n$ can be chosen to be at most $\abs{F}+1$. However, in the present work it is only important for us that it does not depend on the marker set $V$.
\end{remark}

\section{From FCSB to URPC}
\label{sec: from FCSB to URPC}

\subsection{Equivalence between FCSB in measure and URP}
A variation of the Ornstein-Weiss algorithm allows one to prove that free actions of amenable groups on zero-dimensional spaces always have URP (\cite{KerSza20}). Importantly, the number of steps in the algorithm depends only on the prescribed invariance $(K, \delta)$ and the number of elements in the initial cover $\{U_i\}_{i=1}^n$. 

Now passing to the general case of an action with property FCSB in measure, the reduction done in Theorem \ref{thm: alphabet reduction} allows one to assume that the latter also depends only on $(K, \delta)$. Thus, we may ensure that the error coming from the boundaries remains negligible even after proceeding with the algorithm, and essentially treat the construction as if we are in a zero-dimensional setting. Note that we shall not give the full proof of Theorem \ref{thm: FCSB in measure equiv URP} and will only illustrate the effect of the boundaries described above. The rest of the argument is by now standard and we refer to \cite{KerSza20} for the full details.

\begin{lemma}[\cite{KerSza20}, Lemma 3.1]\label{lem: KS lemma}
Let $K$ be a finite subset of $G$ and let $\delta > 0$. Then there is an $\eps > 0$ such that if $F$ is a $(K, \eps)$-invariant subset of $G$ then every set $F^\prime \subset F$ with $\abs{F^\prime} \ge (1-\eps)\abs{F}$ is $(K, \delta)$-invariant.
\end{lemma}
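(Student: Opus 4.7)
The plan is to reduce the problem to a direct counting argument: the symmetric difference $F' \triangle \bigcap_{g \in K} gF'$ can be controlled by combining the $(K,\eps)$-invariance of $F$ with the near-equality $|F'| \ge (1-\eps)|F|$, and then choosing $\eps$ sufficiently small in terms of $\delta$ and $|K|$.

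First I would write
\[
F' \setminus \bigcap_{g \in K} gF' \;=\; \bigcup_{g \in K} \brak{F' \setminus gF'},
\]
so that
\[
\abs{F' \setminus \bigcap_{g \in K} gF'} \;\le\; \sum_{g \in K} \abs{F' \setminus gF'}.
\]
Next I would apply the decomposition $F' \setminus gF' \subset (F' \setminus gF) \cup (gF \setminus gF')$ for each $g \in K$. Since $F' \subset F$, the first piece satisfies $\abs{F' \setminus gF} \le \abs{F \setminus gF} < \eps\abs{F}$ by $(K,\eps)$-invariance of $F$. The second piece has size $\abs{gF \setminus gF'} = \abs{F \setminus F'} \le \eps \abs{F}$ by the hypothesis on $F'$.

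Summing over $g \in K$ gives
\[
\abs{F' \setminus \bigcap_{g \in K} gF'} \;\le\; 2\eps \abs{K} \abs{F},
\]
and dividing by $\abs{F'} \ge (1-\eps)\abs{F}$ yields the ratio bound
\[
\frac{\abs{F' \setminus \bigcap_{g \in K} gF'}}{\abs{F'}} \;\le\; \frac{2\eps\abs{K}}{1-\eps}.
\]
To finish, I would choose $\eps > 0$ small enough that $2\eps\abs{K}/(1-\eps) < \delta$; for instance $\eps = \delta/(2\abs{K} + \delta)$ suffices. There is no real obstacle here — the lemma is purely combinatorial and the only care needed is to track both sources of error (the $(K,\eps)$-invariance slack of $F$ and the defect of $F'$ inside $F$) and to divide by the correct normalizing quantity $\abs{F'}$ rather than $\abs{F}$.
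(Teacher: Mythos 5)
Your overall strategy is the standard counting argument, and the paper itself offers no proof of this lemma (it is quoted from Kerr--Szab\'o), so there is nothing to compare against beyond correctness. The reduction to bounding $\abs{F'\setminus\bigcap_{g\in K}gF'}$ is sound, since $\abs{\bigcap_{g\in K}gF'}\ge\abs{F'\cap\bigcap_{g\in K}gF'}=\abs{F'}-\abs{F'\setminus\bigcap_{g\in K}gF'}$, and the second piece of your decomposition, $\abs{gF\setminus gF'}=\abs{F\setminus F'}\le\eps\abs{F}$, is fine.

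The one step that does not follow from this paper's definition as written is the claim $\abs{F\setminus gF}<\eps\abs{F}$ ``by $(K,\eps)$-invariance of $F$.'' Here $(K,\eps)$-invariance means $\abs{\bigcap_{h\in K}hF}>(1-\eps)\abs{F}$, and this controls $\abs{F\setminus gF}$ only when $e\in K$: in that case $\bigcap_{h\in K}hF\subset F\cap gF$, so $\abs{F\cap gF}>(1-\eps)\abs{F}$ and hence $\abs{F\setminus gF}<\eps\abs{F}$. If $e\notin K$ the implication fails outright: for $K=\{g\}$ with $gF\cap F=\emptyset$ the set $F$ is $(K,\eps)$-invariant for every $\eps>0$, yet $\abs{F\setminus gF}=\abs{F}$ (and in that situation your quantity $\abs{F'\cap\bigcap_{h}hF'}$ can even be $0$ while the lemma's conclusion still holds, because $\bigcap_h hF'$ need not sit inside $F'$). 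The repair is one line: since $\bigcap_{h\in K}hF=g_0\bigl(\bigcap_{h\in g_0^{-1}K}hF\bigr)$ for any fixed $g_0\in K$, a set is $(K,\eps)$-invariant iff it is $(g_0^{-1}K,\eps)$-invariant, so one may assume $e\in K$ without loss of generality, after which your argument goes through. A last cosmetic point: with $\eps=\delta/(2\abs{K}+\delta)$ one gets $2\eps\abs{K}/(1-\eps)=\delta$ exactly rather than $<\delta$; this is harmless because the bound $\abs{F\setminus gF}<\eps\abs{F}$ is strict, but either take the inequality strict there or shrink $\eps$ slightly.
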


\begin{theorem}\label{thm: FCSB in measure equiv URP}
An action has FCSB in measure if and only if it has URP.
\end{theorem}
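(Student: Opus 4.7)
The implication URP $\Rightarrow$ FCSB in measure is the in-measure version of Proposition \ref{prop: URPC implies FCSB}: repeat that proof verbatim, replacing each dynamical subequivalence ``$\cdot \prec V$'' by the density bound ``$\uD(\cdot) < \eps$'', as permitted by the translation convention stated at the start of Section \ref{sec: FCSB}.

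For the converse, fix a finite $K \finsub G$ and $\delta > 0$; the goal is to produce a castle with $(K, \delta)$-invariant shapes whose footprint complement has $\uD < \delta$. The plan is to transplant the Ornstein--Weiss-style algorithm used in \cite{KerSza20} to establish URP for free amenable actions on zero-dimensional spaces, with property FCSB in measure serving as a substitute for zero-dimensionality. That algorithm begins with a clopen $F$-free partition $\{U_i\}_{i=1}^n$ (for some $F = F(K, \delta) \finsub G$) and performs a fixed number $N = N(K, \delta)$ of iterations, each of which defines a new tower by finitely many Boolean operations and translations of previously constructed sets. In our setting, apply FCSB in measure together with Theorem \ref{thm: alphabet reduction} to obtain an $F$-free cover $\{U_i\}_{i=1}^n$ with $\sum_i \mu(\partial U_i) + \mu\bigl(X \setminus \bigcup_i U_i\bigr) < \eps$ for every $\mu \in M_G(X)$, and---crucially---with $n \le 3^{|F|}$ depending only on $K$ and $\delta$. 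Running the zero-dimensional algorithm on this $\{U_i\}_{i=1}^n$ verbatim now produces an approximation of a castle whose levels are well defined except on an \emph{ambiguous set}: those points that, during the Boolean combinations, lie in a translate of some $\partial U_i$ or of the remainder $X \setminus \bigcup_i U_i$.

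Because the algorithm uses at most $M = M(K, \delta)$ translations (the dependence on $n$ has been absorbed since $n$ itself now depends only on $K, \delta$), the ambiguous set is, up to $\mu$-null sets, contained in a union of $M$ translates of the $\partial U_i$ and the remainder. Invariance of $\mu$ then gives $\mu(\text{ambiguous set}) \le M \eps$ for every $\mu \in M_G(X)$; taking $\eps < \delta/(2M)$ and using Lemma \ref{lem: KS lemma} to absorb the slight loss of invariance incurred when one restricts the tower shapes away from the ambiguous set, one obtains the desired castle. The main obstacle is the bookkeeping for the ambiguous set: one must verify step by step that the boundaries and defects newly introduced in iteration $k$ of the algorithm are, modulo $\mu$-null sets, contained in a bounded (in $K, \delta$) number of translates of the original $\partial U_i$ and the remainder. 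Once this is established, Theorem \ref{thm: alphabet reduction} is precisely what keeps the constant $M$ independent of $\eps$, so that shrinking $\eps$ makes the total defect arbitrarily small; the rest of the argument is a direct transcription of the zero-dimensional construction of \cite{KerSza20}.
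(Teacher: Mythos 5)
Your proposal is correct and follows essentially the same route as the paper: both directions are handled identically, and for FCSB in measure $\Rightarrow$ URP the paper likewise runs the Ornstein--Weiss algorithm of \cite{KerSza20} on the $F$-free collection supplied by Theorem \ref{thm: alphabet reduction}, using the fact that the number of sets (hence the number of algorithm steps and of translates contributing to the error) depends only on $(K,\delta)$, so that the accumulated defect is contained in $F^{nm+1}$ applied to the boundaries and the remainder and can be made small by shrinking $\eps$. The paper, like you, only sketches the bookkeeping (it makes the towers genuinely well defined by taking bases in the join $\bigvee_i\bigvee_{g\in L} g\{U_i, X\setminus\overline{U_i}\}$ rather than excising an ambiguous set afterwards, but this is a presentational difference) and defers the standard details to \cite{KerSza20}.
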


\begin{proof}
Proposition \ref{prop: URPC implies FCSB} gives one of the implications so it remains to show that property FCSB in measure implies URP.

Let $K \subset G$ be finite and let $\delta > 0$. Choose a constant $1/2 > \eps > 0$ that satisfies the conclusion of Lemma \ref{lem: KS lemma} and such that additionally $\eps (1+\delta) < 1$. Choose $n \in \Nb$ such that $(1-\eps)^n < \eps$. Fix a $\beta > 0$ with
\[
\brak{1+\beta}^{-1}\brak{1 - \brak{1-\brak{1+\beta}\eps}^n} > 1-\eps,
\]
and choose nonempty $(K, \eps)$-invariant sets $\{e\} \in F_1 \subset F_2 \subset \ldots \subset F_n \subset G$ such that for every $1 \le i < j \le n$ the set $F_j$ is $\brak{F_i^{-1}, \beta(1-\eps)}$-invariant. Set $F = F_n^{-1}F_n$.

For brevity, denote $m = 3^{\abs{F}}$. By Theorem \ref{thm: alphabet reduction}, there exists a collection $U_1, \ldots, U_m$ of $F$-free open sets with 
\begin{equation}\label{eq: chosen sets with small boundaries}
\sum_{i=1}^m\mu(\partial U_i) + \mu\brak{X \setminus \bigcup_{i=1}^m U_i} < \frac{\eps}{\abs{F^{nm+1}}}
\end{equation}
for any $G$-invariant probability measure $\mu$. Set 
\[
\cU_i = \{U_i, X \setminus \overline{U_i}\},
\]
and for a finite subset $L \subset G$ define
\[
\cU^L = \bigvee_{i=1}^{m}\bigvee_{g \in L}g\cU_i.
\]

We now construct the Rokhlin towers using the Ornstein-Weiss algorithm. This inductive algorithm has $nm$ steps and runs through all pairs $\{(i, j) \mid 1 \le i \le n, 1 \le j \le m \}$ in lexicographical order. Set
\[
L_{i, j} = F^{(i-1)m+j-1},
\]
where $L_{1, 1} = F^0 = \{e\}$. At step $(i, j)$ we add to the castle a collection of towers such that their shapes are subsets of $F_{n-i+1}$ and their bases are elements of $\cU^{L_{i, j}}$ belonging to $U_j$ (as $\cU^{L_{i, j}}$ refines $\cU_j$ every element of it is either a subset of $U_j$ or is disjoint from it).

For the first step, we add the tower $(F_n, U_1)$ to the castle. Assume that a collection of towers $\{(S_k, V_k)\}_{k=1}^r$ has been constructed and we are now at step $(i, j) \ne (1, 1)$. Let
\[
(i^\prime, j^\prime) = 
\begin{cases}
    (i, j-1) & \mbox{if} \ j > 1 \\
    (i-1, m) & \mbox{if} \ j = 1.
\end{cases}
\]
Let $V$ be a set in $\cU^{L_{i, j}}$ belonging to $U_j$. Note that for any $g \in F$ the collection $\cU^{L_{i, j}}$ refines the collection $g\cU^{L_{i^\prime, j^\prime}}$. As both $F_{n-i+1}$ and $S_k$ are subsets of $F_n$ and $F = F_n^{-1}F_n$, it follows that for any $g \in F_{n-i+1}$, any $1 \le k \le r$, and any $h \in S_k$, the set $gV$ is either fully contained in $hV_k$ or is disjoint from it. Thus, the set 
\[
F_{n-i+1}^\prime = \{g \in F_{n-i+1} \mid gx \not \in \bigsqcup_{k=1}^rS_kV_k\}
\]
is independent from the chosen point $x \in V$. Finally, if $\abs{F_{n-i+1}^\prime} \ge (1-\eps)\abs{F_{n-i+1}}$ then we add the tower $(F_{n-i+1}^\prime, V)$ to the castle (and otherwise do nothing).

Clearly, the construction yields a collection of pairwise disjoint towers with $(K, \delta)$-invariant shapes. Moreover, let $1 \le i \le n$ and let $\{(S_k, V_k)\}_{k=1}^r$ be the castle obtained after step $(i, m)$. Then it is easy to see that for any $x \in X$ we have that
\[
\abs{F_{n-i+1}x \cap \brak{\bigsqcup_{k=1}^mS_kV_k \cup \bigcup_{j=1}^m FL_{i, m}\brak{\partial U_j} \cup F \brak{X \setminus \bigcup_{j=1}^m U_j}}} > \eps \abs{F_{n-i+1}}.
\]

Using the standard machinery (see \cite[Section 3]{KerSza20}), one shows that if $W$ is the footprint of the final castle constructed as a result of the procedure above, then
\[
\uD\brak{W \cup F^{nm+1}\brak{\bigcup_{j=1}^{m} \brak{\partial U_j} \cup \brak{X \setminus \bigcup_{j=1}^m U_j} }} > 1-\eps
\]
and thus, by \eqref{eq: chosen sets with small boundaries}, we conclude that
\[
\uD(W) > 1-2\eps. 
\]
\end{proof}

\subsection{Local tiling algorithms and URPC}

In this subsection we take an orbitwise perspective, as opposed to the spatial arguments of the previous subsection. We introduce the notion of a local algorithm, which has long been known to be the bridge between solving discrete and descriptive versions of a problem. In particular, this gives a slightly different proof of Theorem \ref{thm: FCSB in measure equiv URP}, relying on the fact that the classical Ornstein-Weiss algorithm is local (Theorem \ref{thm: OW local}). Unfortunately, this algorithm only produces a $(1-\eps)$-tiling which limits one to proving URP. The main result of this section is that if a group has a local tiling algorithm that produces an \emph{exact} tiling then one can similarly show that FCSB implies URPC (Theorem \ref{thm: FCSB and local alg imply URPC}). As we shall see in the next subsection, many amenable groups admit exact local tiling algorithms. 

\begin{definition}\label{def: local map}
Let $Q, R$ be arbitrary sets and let $W \subset G$ be a finite set. Let $\cF \subset Q^G$ be a \emph{right}-invariant subset of maps from $G$ to $Q$ and consider it's restriction $\left.\cF\right|_W$ to the set $W$. Let $\phi : \left.\cF\right|_W \to R$ be an arbitrary map. For any $x \in \cF$ we can define a function $\hat{\phi}_x : G \to R$ by
\[
\hat{\phi}_x(g) = \phi\brak{\brak{x_{hg}}_{h \in W}}.
\]
Thus the assignment $x \mapsto \hat{\phi}_x$ is a map from $\cF$ to $R^G$. 

Given an arbitrary function $\Phi : \cF \to R^G$ (which we usually write as a collection of maps $\Phi_x : G \to R$ for $x \in \cF$) we will say that it is \emph{local} (alternatively, \emph{depends locally on $\cF$} or \emph{depends locally on $x$}) if there exists some $W$ and $\phi$ as above such that $\Phi = \hat{\phi}$. In this case we will additionally say that $\Phi$ is \emph{determined by the window $W$}. Note that any local function is equivariant with respect to the right action of $G$ on the shifts.

In the special case when $R = \{0, 1\}$ we will identify the space $R^G$ with the space of subsets of $G$ in the usual way. Thus, given a collection of subsets $A_x, x \in \cF$ we say that it depends locally on $\cF$ if the assignment $x \mapsto \mathbbm{1}_{A_x}$ is local.
\end{definition}

\begin{remark}\label{rem: local join composition}
We start by recording the following easy observations.
\begin{itemize}
    \item Let $\Phi : \cF \to R^G$ and $\Psi : \cF \to P^G$ be local maps. Then the join map $(\Phi, \Psi) : \cF \to (R \times P)^G$ is local as well.
    \item Let $\Phi : \cF \to R^G$ and $\Psi : \mathrm{Im}(\Phi) \subset R^G \to P^G$ be local maps. Then the composition $\Psi \circ \Phi : \cF \to P^G$ is local as well.
\end{itemize}
\end{remark}

In the Definition \ref{def: local map} above the right-invariant set $\cF$ should be considered as a space of possible input data. We now introduce the main family of subshifts that will be used as the data $\cF$.

\begin{definition}
Let $F$ be a finite symmetric subset of $G$ containing the identity $e$ and set $n = n(F) = 3^{\abs{F}}$. Define 
\[
\cC_F = \left\{\brak{x_g}_{g \in G} \in \{1, \ldots, n\}^G \mid x_h \ne x_{gh} \ \mbox{for any} \ g \in F \setminus \{e\} \ \mbox{and any} \ h \in G\right\}.
\]
In other words, $\cC_F$ is the space of regular $n$-colorings of the graph $\mathrm{Cay}(G, F)$. 
\end{definition}

\begin{remark}\label{rem: coloring extension}
Let $W \subset G$ be any subset of $G$. Then the restriction $\left.\cC_F\right|_W$ is the space of regular $n$-colorings of $\left.\mathrm{Cay}(G, F)\right|_W$, that is the set
\[
\left\{\brak{x_g}_{g \in W} \in \{1, \ldots, n\}^W \mid x_h \ne x_{gh} \ \mbox{for any} \ g \in F \setminus \{e\} \ \mbox{and any} \ h \in W \ \mbox{such that} \ gh \in W\right\}.
\]
Indeed, it is clear that any element in the restriction $\left.\cC_F\right|_W$ belongs to the set above. Conversely, a simple greedy coloring algorithm shows that any regular $n$-coloring of $\left.\mathrm{Cay}(G, F)\right|_W$ extends to a regular $n$-coloring of $\mathrm{Cay}(G, F)$.
\end{remark}

\begin{lemma}\label{lem: local coloring F prime F}
Let $F, F^\prime \subset G$ be finite symmetric subsets of $G$ containing the identity such that $F \subset F^\prime$. Then there is a local map from $\cC_{F^\prime}$ to $\cC_F$.
\end{lemma}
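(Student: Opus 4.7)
The plan is to build $\Phi(c) \in \cC_F$ from an input $c \in \cC_{F'}$ by a sequence of local recolorings that iteratively shrink the palette, one color at a time, from size $n(F')$ down to $n(F)$. Since $F \subset F'$, any $c \in \cC_{F'}$ is automatically a proper coloring of $\mathrm{Cay}(G, F)$, so this is true at the start, and the idea is to keep $F$-properness throughout the procedure. We will \emph{not} try to maintain $F'$-properness after the first modification; it is lost after step one, and this is fine.

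Concretely, I set $c^{(0)} = c$ and, for $k = 0, 1, \dots, n(F') - n(F) - 1$, define
\[
c^{(k+1)}(g) = \begin{cases} c^{(k)}(g) & \text{if } c^{(k)}(g) < n(F')-k, \\ \min\bigl( \{1, \dots, n(F')-k-1\} \setminus \{c^{(k)}(fg) : f \in F \setminus \{e\}\} \bigr) & \text{if } c^{(k)}(g) = n(F')-k, \end{cases}
\]
so that at each step all vertices currently carrying the top color $n(F')-k$ are simultaneously recolored to the smallest admissible color below. The output is $\Phi(c) := c^{(n(F') - n(F))}$.

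The verification is a routine induction establishing three invariants at each step $k$: (a) $c^{(k)}$ takes values in $\{1, \dots, n(F')-k\}$, (b) $c^{(k)}$ is a proper coloring of $\mathrm{Cay}(G, F)$, and (c) $c^{(k)}(g)$ depends only on the restriction of $c$ to $F^k g$. Invariants (a) and (c) are immediate. Only (b) requires real thought: for $g, fg$ with $f \in F \setminus \{e\}$, if both retain their old colors at step $k+1$ then properness is inherited from $c^{(k)}$; and if $c^{(k)}(g) = n(F')-k$, then (b) and (a) for $c^{(k)}$ force $c^{(k)}(fg) < n(F')-k$, hence $c^{(k+1)}(fg) = c^{(k)}(fg)$, and $c^{(k+1)}(g)$ is chosen precisely to avoid this value (the symmetric case $c^{(k)}(fg) = n(F')-k$ is handled the same way).

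The one arithmetic point to verify is that the ``min'' in the definition is always taken over a nonempty set: the palette $\{1, \dots, n(F')-k-1\}$ has size $n(F')-k-1$ and we exclude at most $|F|-1$ values, so we need $n(F')-k-1 \ge |F|$. The tightest case is the final step $k = n(F') - n(F) - 1$, where this becomes $n(F) \ge |F|$, which holds with enormous room since $n(F) = 3^{|F|}$. After the $n(F') - n(F)$ steps we therefore have $\Phi(c) \in \cC_F$, and invariant (c) certifies that $\Phi$ is local, determined by the window $F^{n(F') - n(F)}$. The main (mild) conceptual obstacle throughout is simply being disciplined about which edge set we are maintaining properness for: $F'$-properness is sacrificed after step one, but $F$-properness is exactly what survives, and this is all that is demanded of the target $\cC_F$.
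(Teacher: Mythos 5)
Your proof is correct. All the key points check out: the set of vertices recolored at step $k+1$ (those with $c^{(k)}(g)=n(F')-k$) is $F$-independent precisely because $c^{(k)}$ is $F$-proper, so no two simultaneously recolored vertices are $F$-neighbors and the simultaneous greedy choice cannot create a conflict; the palette count $n(F')-k-1\ge |F|$ at the tightest step reduces to $n(F)=3^{|F|}\ge |F|$; and the window $F^{\,n(F')-n(F)}$ certifies locality.

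Your route differs from the paper's in the choice of decomposition, though both are finite-round greedy recolorings in which each round acts simultaneously on an $F$-independent set. The paper builds a fresh coloring $y$ from scratch, scheduled by the $3^{|F'|}$ color classes of the \emph{input} $F'$-coloring $x$ (each of which is $F'$-independent, hence $F$-independent), assigning $y(g)$ greedily against the already-colored $F$-neighbors. You instead transform the coloring in place, eliminating one color per round and letting the \emph{evolving} coloring's own $F$-properness supply the independence of the recolored set. Your version uses fewer rounds ($n(F')-n(F)$ rather than $n(F')$) but requires carrying the properness invariant through the induction, whereas in the paper's version independence of each round's vertex set is free from the input. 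One small trade-off: the paper's construction visibly lands in $\{1,\dots,|F|+1\}$ (as noted in the remark following the lemma), while yours as written stops at palette size $n(F)=3^{|F|}$; this is immaterial for membership in $\cC_F$, but if you wanted the sharper range you would simply continue the palette reduction for further rounds, which the same arithmetic permits down to $|F|+1$ colors.
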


\begin{proof}
Let $x \in \cC_{F^\prime}$ be given. As usual, set $n = 3^{\abs{F}}$. We inductively define a function $y : G \to \{1, 2 \ldots, n\}$ on each color class of $x$ (that is, the sets $\{g \in G \mid x(g) = k\}$ for $k = 1, 2, \ldots, 3^{\abs{F^\prime}}$) such that at each step the new values $y(g)$ depend only the local data of $x$ and previously defined values of $y$. Since the number of steps are finite, this ensures that the final assignment $x \mapsto y$ is local.

The construction is again a simple greedy algorithm. Specifically, assume that $y(g)$ has been defined for all $g$ with $x(g) < k$ and we are now at step $k$. For any $g \in G$ with $x(g) = k$ set
\[
y(g) = \min(\{1 \le i \le n \mid y(hg) \ne i \ \mbox{for any} \ h \in F \ \mbox{such that} \ x(hg) < k\}).
\]
It is easily seen that this procedure results in an element $y \in \cC_F$.
\end{proof}

\begin{remark}
In the proof above $y$ clearly only takes values in $\{1, 2, \ldots, \abs{F}+1\}$. This should be compared to Remark \ref{rem: alphabet reduction F + 1}. Once again, for our purposes the exact number is not important. 
\end{remark}

\begin{definition}\label{def: tiling}
Let $G$ be an amenable group. An \emph{approximate tiling} of $G$ is a map $S : G \to 2^D \sqcup \{\star\}$, where $D$ is some finite subset of $G$, such that 
\[
S(g)g \cap S(g^\prime)g^\prime = \emptyset
\]
whenever $g, g^\prime \in G$ are distinct and $S(g), S(g^\prime) \ne \star$.

We will refer to the elements $g \in G$ with $S(g) \ne \star$ as \emph{centers} and to the set $C(S) = \{g \in G \mid S(g) \ne \star\}$ as the \emph{center set}. A pair $(S(g), g)$, where $g \in C(S)$, is called a \emph{tile} and the set $S(g)$ is its \emph{shape}. An approximate tiling gives a partition of the group
\[
G = \bigsqcup_{g \in C(S)}S(g)g \sqcup \brak{G \setminus \bigsqcup_{g \in C(S)}S(g)g}.
\]
The set $G \setminus \bigsqcup_{g \in C(S)}S(g)g$ is called the \emph{remainder} of the approximate tiling. We additionally define the \emph{relative position} function $T : G \to D \sqcup \{\star\}$ by
\[
T(g) =
\begin{cases}
h \in G & \mbox{if} \ h^{-1}g \in C(S) \ \mbox{and} \ h \in S(h^{-1}g) \ (\mbox{and hence} \ g \in S(h^{-1}g)h^{-1}g), \\
\star & \mbox{if no such} \ h \ \mbox{exists}.
\end{cases}
\]

We say that the approximate tiling $S$ is
\begin{enumerate}
    \item \emph{$(K, \eps)$-invariant} if for any $g \in C(S)$ the shape $S(g)$ is a $(K, \eps)$-invariant subset of $G$,
    \item \emph{$(1-\eps)$-covering relative to a finite subset $L \subset G$} if 
    \[
    \uD_L\brak{G \setminus \bigsqcup_{g \in C(S)}S(g)g} < \eps,
    \]
    \item \emph{exact} if $G = \bigsqcup_{g \in C(S)}S(g)g$.
\end{enumerate}
\end{definition}

\begin{remark}
Note that the center set, the remainder set, and the relative position function all depend locally on $S$. Thus, by Remark \ref{rem: local join composition}, if a family of tilings $S_x$ depend locally on some input data $x \in \cF$ then so do each of the above.
\end{remark}

The classic Ornstein-Weiss algorithm produces a $(K, \eps)$-invariant $(1-\eps)$-covering tiling based on a $F$-regular coloring in a local way. This can be deduced, for instance, from it's description in \cite{DowHucZha19}. We remark that the main result of that paper is that every amenable group admits an exact tiling -- however, not necessarily one that depends locally on a coloring.

\begin{theorem}\label{thm: OW local}
Let $G$ be an amenable group, let $K \subset G$ be finite, and let $\eps > 0$. Then there exist finite (symmetric) sets $F, D, L \subset G$ and a local function 
\[
S : \cC_F \to \brak{2^D \sqcup \{\star\}}^G
\]
such that for any $x \in \cC_F$ the element $S_x$ is a $(K, \eps)$-invariant $(1-\eps)$-covering relative to the set $L$ approximate tiling.
\end{theorem}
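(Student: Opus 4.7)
The plan is to run the standard Ornstein--Weiss algorithm symbolically on the group $G$, using the regular coloring $x \in \cC_F$ as a canonical tie-breaker so that every choice made by the algorithm is determined within a bounded window. First I fix the numerical parameters exactly as in the proof of Theorem \ref{thm: FCSB in measure equiv URP}: choose $n \in \Nb$ with $(1-\eps)^n < \eps$ and a small $\beta > 0$, then pick nested nonempty $(K, \eps)$-invariant sets $\{e\} \in F_1 \subset F_2 \subset \ldots \subset F_n$ such that for $i < j$ the set $F_j$ is $(F_i^{-1}, \beta(1-\eps))$-invariant. Set $D = F_n$ and take $F \subset G$ to be a finite symmetric set containing $F_n^{-1}F_n$; write $m = 3^{\abs{F}}$ for the number of color classes available in $\cC_F$.

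Given $x \in \cC_F$, I build $S_x$ inductively in $nm$ steps indexed by pairs $(i, j)$ with $1 \le i \le n$, $1 \le j \le m$, processed in lexicographic order. At step $(i, j)$, I look at every $g \in G$ with $x(g) = j$ and declare $g$ to be a new center of shape $F_{n-i+1}$ precisely when (a) $g$ has not been covered by a tile placed at an earlier step and (b) the translate $F_{n-i+1}g$ is disjoint from the footprints of all previously placed tiles; otherwise the step does nothing at $g$. For distinct $g, g'$ with $x(g) = x(g') = j$, the coloring condition gives $g(g')^{-1} \notin F \supset F_{n-i+1}^{-1}F_{n-i+1}$, so the candidate shapes $F_{n-i+1}g$ and $F_{n-i+1}g'$ are automatically disjoint and the simultaneous placement within a single step is consistent. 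At the end, I set $S_x(g) = F_{n-i+1}$ if $g$ became a center at some step $(i, j)$, and $S_x(g) = \star$ otherwise.

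To see that $S$ is local, observe that the decision at $g$ during step $(i, j)$ depends only on $x$ and on the partial tiling within a window around $g$ that enlarges by at most a factor of $F$ between consecutive steps. Since the total number of steps $nm$ is finite, the window $W := F^{nm}$ suffices to determine $S_x(g)$ from $x$ restricted to $Wg$, and iterated use of Remark \ref{rem: local join composition} produces the required function $\phi \colon \left.\cC_F\right|_W \to 2^D \sqcup \{\star\}$. The invariance claim then holds by construction, since every shape placed is one of the $(K, \eps)$-invariant sets $F_{n-i+1}$, and the $(1-\eps)$-covering property relative to some finite $L \subset G$ follows from the standard Ornstein--Weiss analysis reproduced in the proof of Theorem \ref{thm: FCSB in measure equiv URP}, now performed orbitwise on $G$ rather than on the space $X$.

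The one substantive subtlety is the consistency of simultaneous placements within a single color class; this is precisely what forces the Cayley generating set $F$ to contain $F_n^{-1}F_n$ and is the structural reason a coloring from $\cC_F$, rather than a more arbitrary symbolic input, is the right ingredient to localize an Ornstein--Weiss construction. Everything else is bookkeeping: once the coloring has been exploited to make the greedy choices canonical, one merely translates the familiar counting estimates on Følner shapes into the orbitwise language of Definition \ref{def: tiling}.
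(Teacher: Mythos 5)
Your overall strategy --- running the Ornstein--Weiss construction symbolically on $G$ and using the coloring $x \in \cC_F$ to make every choice canonical within a bounded window --- is the right one, and your treatment of locality and of the consistency of simultaneous placements within a single color class is correct. (The paper does not actually write out a proof of this theorem; it defers to \cite{DowHucZha19}, and the relevant template inside the paper is the proof of Theorem \ref{thm: FCSB in measure equiv URP}.)

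There is, however, a genuine gap: the algorithm you describe is not the Ornstein--Weiss algorithm, and the $(1-\eps)$-covering estimate fails for it. At step $(i,j)$ you declare $g$ a center only when the \emph{entire} translate $F_{n-i+1}g$ is disjoint from all previously placed tiles, and you then place the \emph{full} shape $F_{n-i+1}$. The Ornstein--Weiss algorithm (as in the proof of Theorem \ref{thm: FCSB in measure equiv URP}) instead places the \emph{partial} shape $F_{n-i+1}^{\prime} = \{h \in F_{n-i+1} \mid hg \ \mbox{not yet covered}\}$ whenever $\abs{F_{n-i+1}^{\prime}} \ge (1-\eps)\abs{F_{n-i+1}}$. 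This $\eps$-disjointification is exactly what guarantees that each round covers a definite $\eps$-dependent fraction of the still-uncovered set, which is what the choice $(1-\eps)^n < \eps$ is calibrated to. With your exact-disjointness rule the dichotomy at a point $g$ degenerates to ``either a tile is placed at $g$, or at least \emph{one} point of $F_{n-i+1}g$ was already covered,'' which yields no usable density gain. Concretely, the maximality you obtain at the end of a round only guarantees $G = F_{n-i+1}^{-1}F_{n-i+1}C$ for the center set $C$, hence a covered density of order $\abs{F_{n-i+1}}/\abs{F_{n-i+1}^{-1}F_{n-i+1}}$, and this ratio is not bounded below uniformly over F{\o}lner sets of a general amenable group (for natural F{\o}lner sets of the lamplighter group it tends to $0$). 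So the per-round gain can be arbitrarily small, and the appeal to ``the standard Ornstein--Weiss analysis'' is invalid for the modified algorithm, since that analysis is an analysis of the $\eps$-disjoint version. The fix is to revert to the $\eps$-disjoint placement: the shapes become $(1-\eps^{\prime})$-fraction subsets of the $F_{n-i+1}$ (still subsets of $D = F_n$, so the codomain is unaffected), and their $(K,\eps)$-invariance is recovered via Lemma \ref{lem: KS lemma} by choosing the initial invariance parameter appropriately. Your locality argument then goes through unchanged, since the partial shape at $g$ is still determined by the previously placed tiles inside the window $Fg$.
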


For any free action of $G$ on a measure space, Borel space, or zero-dimensional topological space, one can properly color the Shreier graph ${\mathrm{Shr}}(X, F)$ into $n=3^{\abs{F}}$ colors so that each color class is measurable, Borel, or clopen respectively. This is equivalent to saying that there is a partition 
\[
X = \bigsqcup_{i=1}^nU_i
\]
of the space into $F$-free measurable, Borel, or clopen subsets $U_i$. This coloring thus gives a natural map from the space $X$ to the subshift $\cC_F$ by
\[
x \mapsto (c(gx))_{g \in G},
\]
where $c : X \to \{1, 2, \ldots, n\}$ is the coloring function defined by setting $c \equiv i$ on $U_i$. By composing the map into the subshift with a local algorithm that solves a certain discrete problem, one can then obtain a measurable, Borel, or clopen solution. For instance, composing with the tiling function $S$ given by Theorem \ref{thm: OW local} recovers the original Ornstein-Weiss theorem for actions on measure spaces, as well as its Borel and zero-dimensional versions.

Although we are not in the zero-dimensional setting, the conclusion of Theorem \ref{thm: alphabet reduction} can be regarded as an analogy to the coloring described above --- the space can be partitioned into $n$ open $F$-free sets up to an arbitrarily small remainder. Naturally, we do not have the map from $X$ to the subshift $\cC_F$ anymore as some points remain uncolored. However, if we want to apply a local algortihm determined by some window $W$ to a point $x \in X$, we do not need to look at the whole orbit of $x$ --- it is sufficient for the set $Wx$ to be colored. As the window $W$ is independent of the smallness of the remainder, we may choose the latter so that most of the points in the space possess this property. We refer to the proof of Theorem \ref{thm: FCSB and local alg imply URPC} below for the details of this argument.

Note that, in particular, applying the algorithm given by Theorem \ref{thm: OW local} gives a slightly different proof of Theorem \ref{thm: FCSB in measure equiv URP} from the previous subsection (here we can estimate $W \subset F^{nm+1}$, which is exactly the set that appears in the proof). What this perspective shows, however, is that the precise algorithm used was not important, only the fact that it was local.

\begin{definition}\label{def: local tiling alg}
A group $G$ has a \emph{local tiling algorithm} if for any finite $K \subset G$ and $\eps > 0$ there exist finite (symmetric) sets $F, D \subset G$ and a local function
\[
S : \cC_F \to \brak{2^D \sqcup \{\star\}}^G
\]
such that for any $x \in \cC_F$ the map $S_x$ is a $(K, \eps)$-invariant exact tiling of $G$.
\end{definition}

One feature of local maps is that we can apply it even when the input data is incomplete to obtain a partial structure. We formalize this intuition in the following (slightly cumbersome) lemma.

\begin{lemma}\label{lem: partial tiling}
Let $G$ be a group with a local tiling algorithm. Let $K \subset G$ be finite, let $\eps > 0$, and let $F, D, S$ be as in Definition \ref{def: local tiling alg}. Since $S$ is a local function, there is a finite window $W$ and a map $\phi : \left.\cC_F\right|_{W} \to 2^D \sqcup \{\star\}$ such that 
\[
S_x(g) = \phi\brak{(x_{hg})_{h \in W}}.
\]

Now, let $A \subset G$ be an arbitrary set and let $c \in \left.\cC_F\right|_{A}$ be a proper $F$-coloring of $A$. Consider the set of centers $C_A = \{g \in G \mid Wg \subset A \ \mbox{and} \ \phi\brak{(c_{hg})_{h \in W}} \ne \star\}$ and the corresponding collection of tiles $\{(\phi\brak{(c_{hg})_{h \in W}}, g)\}_{g \in C_A}$. Then the sets $\phi\brak{(c_{hg})_{h \in W}}g$ are pairwise disjoint for $g \in C_A$ and
\[
A \setminus \brak{DW^{-1}(G \setminus A)} \subset \bigsqcup_{g \in C_A}\phi\brak{(c_{hg})_{h \in W}}g.
\]
\end{lemma}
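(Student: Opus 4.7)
The plan is to use Remark \ref{rem: coloring extension} to extend $c$ to a full proper $F$-coloring $\tilde c \in \cC_F$ of $G$, apply the local tiling algorithm to obtain the exact tiling $S_{\tilde c}$, and then observe that the partial tiling produced from $c$ alone is simply the restriction of $S_{\tilde c}$ to those centers whose window lies entirely in $A$.

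First, I would fix an extension $\tilde c \in \cC_F$ with $\left.\tilde c\right|_{A} = c$, which exists by Remark \ref{rem: coloring extension}. Then for any $g \in C_A$ we have $Wg \subset A$, so
\[
\phi\bigl((c_{hg})_{h \in W}\bigr) = \phi\bigl((\tilde c_{hg})_{h \in W}\bigr) = S_{\tilde c}(g).
\]
In particular, the collection of tiles indexed by $C_A$ is a subcollection of the tiles of the exact tiling $S_{\tilde c}$. Since $S_{\tilde c}$ is an exact tiling of $G$, its tiles are pairwise disjoint, which proves the first claim.

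For the covering claim, let $a \in A$ with $a \notin DW^{-1}(G \setminus A)$. Because $S_{\tilde c}$ is exact, there exist a center $g \in C(S_{\tilde c})$ and an element $d \in S_{\tilde c}(g) \subset D$ with $a = dg$. The main step is to verify that $g \in C_A$. Assume toward contradiction that $Wg \not\subset A$; then some $w \in W$ gives $wg \in G \setminus A$, whence $g = w^{-1}(wg) \in W^{-1}(G \setminus A)$ and
\[
a = dg \in D W^{-1}(G \setminus A),
\]
contradicting the hypothesis on $a$. Hence $Wg \subset A$, and moreover $\phi((c_{hg})_{h \in W}) = S_{\tilde c}(g) \ne \star$ since $g$ is a center of $S_{\tilde c}$; thus $g \in C_A$. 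Since $d \in S_{\tilde c}(g) = \phi((c_{hg})_{h \in W})$, the point $a = dg$ lies in the tile $\phi((c_{hg})_{h \in W})g$, giving the desired inclusion.

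The argument is essentially bookkeeping once the extension $\tilde c$ is in hand; the only place where something could go wrong is ensuring that the values produced by $\phi$ really do not depend on the extension. This is automatic because $\phi$ only reads the coordinates indexed by $Wg$, which all lie in $A$ by the definition of $C_A$, so $\tilde c$ and $c$ agree there. Consequently no choice is hidden in the extension, and the pairwise disjointness of the partial tiling is inherited directly from the exactness of $S_{\tilde c}$.
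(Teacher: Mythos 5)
Your proof is correct and follows essentially the same route as the paper: extend the partial coloring to all of $G$ via Remark \ref{rem: coloring extension}, observe that the tiles indexed by $C_A$ form a subcollection of the exact tiling $S_{\tilde c}$ (so disjointness is inherited), and note that any point of $A$ missed by these tiles must lie in a tile whose center is in $W^{-1}(G\setminus A)$, hence in $DW^{-1}(G\setminus A)$.
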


\begin{proof}
Let $x \in \cC_F$ be an arbitrary extension of $c$ to the whole group $G$ (see Remark \ref{rem: coloring extension}). Note that $\{(\phi\brak{(c_{hg})_{h \in W}}, g)\}_{g \in C_A}$ is a subcollection of tiles in the tiling $S_x$, hence the sets $\phi\brak{(c_{hg})_{h \in W}}g$ are pairwise disjoint. Moreover, we have that the set of remaining centers $C(S_x) \setminus C_A$ is clearly a subset of $W^{-1}(G \setminus A)$, thus
\[
\bigsqcup_{g \in C(S_x) \setminus C_A} \phi\brak{(x_{hg})_{h \in W}}g \subset DW^{-1}(G \setminus A).
\]
Since the tiling is exact, the second claim follows.
\end{proof}

We can now prove the main result in this subsection, which shows that a local tiling algorithm is a bridge between property FCSB and URPC.

\begin{theorem}\label{thm: FCSB and local alg imply URPC}
Suppose that an amenable group $G$ has a local tiling algorithm. Let $G \act X$ be an action that has property FCSB. Then it has URPC.
\end{theorem}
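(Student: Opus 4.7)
Let $K\finsub G$, $\eps>0$, and $U$ a marker set be given; we verify condition (iv) of Theorem~\ref{thm: URP+comp equiv} by producing a castle with $(K,\eps)$-invariant shapes whose complement is $\prec U$. The guiding idea: property FCSB supplies a partial coloring of $X$ (each point in $\bigcup U_i$ carries the index $i$), and the local tiling algorithm, being window-based, converts this partial coloring orbitwise into a castle on $X$ whose holes are controlled by the uncolored remainder.

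Apply the local tiling algorithm to $(K,\eps)$ to obtain finite symmetric $F,D\ni e$, a window $W\ni e$, and a map $\phi\colon \left.\cC_F\right|_W\to 2^D\sqcup\{\star\}$ whose induced local function produces exact $(K,\eps)$-invariant tilings of $G$. Set $M=|DW^{-1}|$ and, via Lemma~\ref{Lem: mult L-free subeq}, choose a sufficiently $L$-free marker set $V$ together with elements $\gamma_1,\dots,\gamma_M\in G$ such that the sets $\{\gamma_iV\}_i$ are pairwise disjoint in $U$, realizing $M[V]\prec[U]$ concretely. Apply property FCSB together with the cover reduction (Theorem~\ref{thm: alphabet reduction}) to $F$ and $V$, obtaining an $F$-free family $\{U_1,\dots,U_n\}$ with pairwise disjoint closures satisfying $\sum_i[\partial U_i]+[R]\prec[V]$, where $R=X\setminus\bigcup_i U_i$; in particular $[R]\prec[V]$.

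Define the candidate castle as $\{(T_p,V_p)\}$, indexed by patterns $p\in\left.\cC_F\right|_W$ with $\phi(p)\ne\star$, where $T_p=\phi(p)$ and $V_p=\bigcap_{h\in W}h^{-1}(U_{p_h})$. The main step is to verify castle-ness. For $y\in X$ define $A_y=\{g\in G:gy\in\bigcup U_i\}$ and the partial orbit coloring $\tilde y_g$ = the unique $i$ with $gy\in U_i$ (well-defined on $A_y$ since the $\overline{U_i}$ are pairwise disjoint); the $F$-freeness of the $U_i$ makes $\tilde y$ a proper $F$-coloring of $A_y$. Then $y\in V_p$ with $\phi(p)\ne\star$ says precisely that $e$ is a center of the partial tiling of $A_y$ (in the sense of Lemma~\ref{lem: partial tiling}) with shape $T_p$. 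If $gV_p\cap g'V_{p'}\ne\emptyset$ with $(p,g)\ne(p',g')$, pick $y$ in the intersection and set $s=g'^{-1}g$; the equivariance $S_{\tilde{sx}}(g)=S_{\tilde x}(gs)$ forces $S_{\tilde y}$ to have centers at both $e$ (shape $T_p$) and $s$ (shape $T_{p'}$), so the tiles $T_p$ and $T_{p'}s$ in the orbit both contain the element $g=g'\cdot s$, contradicting exactness. This simultaneously yields $T_p$-freeness of $V_p$ (take $p=p'$) and disjointness of distinct footprints. Shape invariance is inherited by extending any partial pattern $p$ to a full coloring in $\cC_F$ via Remark~\ref{rem: coloring extension}.

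For the remainder, Lemma~\ref{lem: partial tiling} says the partial tiling covers $A_y\setminus DW^{-1}(G\setminus A_y)$, so $x\notin\bigsqcup_p T_pV_p$ forces some $h\in WD^{-1}$ with $hx\in R$, giving
\[
X\setminus\bigsqcup_p T_pV_p\;\subset\; DW^{-1}R.
\]
Any witness $R\prec V$, say a cover $\{W_k\}$ with translates $\{g_k\}$ making $\{g_kW_k\}$ pairwise disjoint in $V$, combined with the enumeration $\{h_i\}_{i=1}^M=DW^{-1}$ and the $\gamma_i$, yields the cover $\{h_iW_k\}_{i,k}$ of $DW^{-1}R$ together with translates $\{\gamma_ig_kh_i^{-1}\}$ sending it inside $\bigsqcup_i\gamma_iV\subset U$; disjointness across $i$ follows from disjointness of the $\gamma_iV$, and across $k$ (for fixed $i$) from the choice of $g_k$. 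Hence $X\setminus\bigsqcup_p T_pV_p\prec U$, completing the verification. The main obstacle is the castle-verification step above, where the locality of the window $W$ and exactness of the tiling have to be used in tandem to transport tile disjointness in $G$ into disjointness of castle levels in $X$.
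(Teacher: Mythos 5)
Your argument is essentially the paper's proof: you apply Theorem \ref{thm: alphabet reduction} together with Lemma \ref{Lem: mult L-free subeq} to get an $F$-free disjoint family with remainder $R$ satisfying $\abs{DW^{-1}}[R]\prec U$, define the tower bases as preimages of the central window-patterns under the induced partial orbit coloring, and use the exactness of the local tiling (via Lemma \ref{lem: partial tiling} and right-equivariance) both to verify the castle condition and to trap the uncovered set inside $DW^{-1}R$. The castle-verification and remainder-localization steps are correct and match the paper.

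The one flaw is in your final step. You assert that $M[V]\prec[U]$ can be ``realized concretely'' by a marker set $V$ and elements $\gamma_1,\dots,\gamma_M$ with $\{\gamma_iV\}_i$ pairwise disjoint \emph{subsets of} $U$, and you then build an explicit witness $\{\gamma_i g_k h_i^{-1}\}$ out of this. That realization is not what Lemma \ref{Lem: mult L-free subeq} provides and is not available in general: the lemma produces a disjoint union $\bigsqcup_i\gamma_iV$ which is then mapped into $U$ only after a further cover-and-translate via Lemma \ref{lem: L-free subeq}, and there is no reason a fixed marker set should have $M$ translates all landing inside a prescribed marker set $U$ (this would require $\bigcap_i\gamma_i^{-1}U$ to contain a marker set). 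The repair is immediate and is what the paper does: you do not need an explicit witness, only the type-semigroup computation
\[
X\setminus\bigsqcup_p T_pV_p \;\subset\; DW^{-1}R \;\prec\; \abs{DW^{-1}}[R]\;\prec\;\abs{DW^{-1}}[V]\;\prec\;U,
\]
using compatibility of $\prec$ with sums and transitivity. With that substitution your proof is complete and coincides with the paper's.
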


\begin{proof}
Let $K \subset G$ be a finite set, let $\eps > 0$, and let $U$ be a marker set. Since $G$ has a local tiling algorithm there exist finite symmetric sets $F, D \subset G$ and a local function
\[
S : \cC_F \to \brak{2^D \sqcup \{\star\}}^G
\]
such that for any $x \in \cC_F$ the element $S_x$ is a $(K, \eps)$-invariant exact tiling of $G$. By definition of a local function, there is a finite window $W \subset G$ and a function $\phi : \left.\cC_F\right|_{W} \to 2^D \sqcup \{\star\}$ such that 
\[
S_x(g) = \phi\brak{(x_{hg})_{h \in W}}.
\]
Recall (Remark \ref{rem: coloring extension}) that $\left.\cC_F\right|_{W}$ is simply the space of $n$-colorings of the set $W$ (recall that $n = 3^{\abs{F}}$).

Use Theorem \ref{thm: alphabet reduction} and Lemma \ref{Lem: mult L-free subeq} to find an $F$-free disjoint collection $\{U_i\}_{i=1}^n$ such that
\[
\abs{DW^{-1}}\sqbrak{X \setminus \bigsqcup_{i=1}^n U_i} \prec U.
\]
Define a partial coloring function $c : X \to \{1, 2, \ldots, n\} \sqcup \{\star\}$ by 
\[
c(y) = 
\begin{cases}
i & \mbox{if} \ y \in U_i, \\
\star & \mbox{if} \ y \in X \setminus \bigsqcup_{i=1}^nU_i.
\end{cases}
\]

Now, let $\mathrm{x} \in \left.\cC_F\right|_{W}$ be a configuration such that $\phi(\mathrm{x}) \ne \star$ (i.e., $\mathrm{x}$ is central) and define
\[
V(\mathrm{x}) = \{y \in X \mid c(gy) = \mathrm{x}(g) \ \mbox{for every} \ g \in W\}.
\]
Consider the collection $\{(\phi(\mathrm{x}), V(\mathrm{x}))\}$, where $\mathrm{x}$ ranges over all central configurations in $\left.\cC_F\right|_{W}$. We claim that this is a castle that witnesses URPC for the action $\GonX$. 

Indeed, first note that it is clear that the bases $V(\mathrm{x})$ are open and shapes $\phi(\mathrm{x})$ are $(K, \eps)$-invariant. Now, let $y \in X$ be an arbitrary point and identify the orbit $Gy$ with the group $G$ in the obvious way. For each central configuration $\mathrm{x} \in \left.\cC_F\right|_{W}$ let $C_\mathrm{x} \subset G$ be the set of elements $g$ such that $gy \in V(\mathrm{x})$. Then the collection $\{(\phi(\mathrm{x}), g)\}$, where $\mathrm{x}$ ranges over all central configurations and $g \in C_\mathrm{x}$, is precisely the partial tiling described in Lemma \ref{lem: partial tiling} with
\[
A = \{g \in G \mid c(gy) \ne \star\}.
\]
It then follows that each pair $(\phi(\mathrm{x}), V(\mathrm{x}))$ is a tower, that all the levels of all towers are pairwise disjoint, and that the remainder of the tiling is contained in the set
\[
DW^{-1}\brak{X \setminus \bigcup_{i=1}^n U_i} \prec \abs{DW^{-1}}\sqbrak{X \setminus \bigsqcup_{i=1}^n U_i} \prec U.
\]

\end{proof}

\subsection{Groups that admit local tiling algorithms}
For actions of amenable groups on zero-dimensional spaces, the existence of exact tilings by Rokhlin towers has been well-studied (\cite{Ker20}, \cite{DowZha23}, \cite{KerSza20}, \cite{KerNar21}, \cite{Nar24}, \cite{NarPet24}). As it turns out, most of the theory can be translated into the language of local algorithms. In this section we will sketch the translations of a few central results. The remaining ones can be obtained from the literature in a similar manner. 

We begin by introducing an appropriate notion of comparison that later turns out to be equivalent to having a local tiling algorithm.

\begin{definition}
Let $A, B \subset G$, let $D \subset G$ be finite, and let $R : A \to D$. We say that $R$ \emph{implements the subequivalence} between $A$ and $B$ if the map $g \mapsto R(g)g$ is an injection from $A$ to $B$. If it is moreover a bijection, then we say that $R$ \emph{implements the equivalence} between $A$ and $B$. The latter is easily seen to be an equivalence relation.

More generally, given elements $a = (a_g)_{g \in G}, b = (b_g)_{g \in G} \in \{0, 1, \ldots, m\}^G$ and a function $R$ on $G$ such that $R(g) = (R(g)_1, R(g)_2, \ldots, R(g)_{a_g}) \in D^{a_g}$, we say that $R$ \emph{implements the subequivalence} between $a$ and $b$ if for any $g \in G$ we have
\begin{equation}\label{eq: multiset subeq}
\abs{\{(i, h) \in \{1, \ldots, m\} \times G \mid i \le a_h \ \mbox{and} \ R(h)_ih = g\}} \le b_g.
\end{equation}
Again, if the inequality \eqref{eq: multiset subeq} is in fact an equality for all $g \in G$, then we say that $R$ \emph{implements the equivalence} between $a$ and $b$. Once again, this is indeed an equivalence relation.

Note that if $R$ implements the equivalence between $A$ and $B$ (resp. $a$ and $b$) then the ``inverse'' map that implements the equivalence between $B$ and $A$ (resp. $b$ and $a$) depends locally on $R$.
\end{definition}

\begin{definition}
Let $L \subset G$ be finite and $\alpha, \beta > 0$. Define
\[
\mathcal{UD}_{L, \alpha} = \left\{\brak{x_g}_{g \in G} \in \{0, 1\}^G \mid \sum_{g \in L}x_{gh} < \alpha\abs{L} \ \mbox{for any} \ h \in G\right\}
\]
and
\[
\mathcal{LD}_{L, \beta} = \left\{\brak{x_g}_{g \in G} \in \{0, 1\}^G \mid \sum_{g \in L}x_{gh} > \beta\abs{L} \ \mbox{for any} \ h \in G\right\}.
\]
That is, $\mathcal{UD}_{L, \alpha}$ is the space of subsets $A \subset G$ with $\uD_L(A) < \alpha$ and $\mathcal{LD}_{L, \beta}$ is the space of subsets $B \subset G$ with $\lD_L(B) > \beta$.

For a number $m \in \Nb$ we can similarly define
\[
\mathcal{UD}^m_{L, \alpha} = \left\{\brak{x_g}_{g \in G} \in \{0, 1, \ldots, m\}^G \mid \sum_{g \in L}x_{gh} < \alpha\abs{L} \ \mbox{for any} \ h \in G\right\}
\]
and
\[
\mathcal{LD}^m_{L, \beta} = \left\{\brak{x_g}_{g \in G} \in \{0, 1, \ldots,  m\}^G \mid \sum_{g \in L}x_{gh} > \beta\abs{L} \ \mbox{for any} \ h \in G\right\}.
\]
These can be viewed as spaces of $m$-tuples of subsets of $G$ with the restriction on the joint density.
\end{definition}

\begin{definition}
A group $G$ has a \emph{local comparison algorithm} if for any finite $L \subset G$ and any pair $0 \le \alpha < \beta \le 1$ there are finite (symmetric) sets $F, D \subset G$ and a local function $R$ of $\cC_F \times \mathcal{UD}_{L, \alpha} \times \mathcal{LD}_{L, \beta}$ such that if $(x, A, B) \in \cC_F \times \mathcal{UD}_{L, \alpha} \times \mathcal{LD}_{L, \beta}$ then $R_{(x, A, B)}$ implements the subequivalence between $A$ and $B$.

We say that $G$ has a \emph{local multiset comparison algorithm} if for any $m \in \Nb$, any finite $L \subset G$ and any pair $0 \le \alpha < \beta \le m$ there are finite sets $F, D \subset G$ and a local function $R$ of $\cC_F \times \mathcal{UD}^m_{L, \alpha} \times \mathcal{LD}^m_{L, \beta}$ such that $R_{(x, a, b)}$ implements the subequivalence between $a$ and $b$.
\end{definition}

The following is a basic exercise about the F{\o}lner sets. We omit the proof.

\begin{lemma}\label{lem: amenable delta kappa}
Let $G$ be an amenable group, let $K \subset G$ be finite, and let $\eps > 0$. Then there are $\kappa, \delta > 0$ such that if $S \subset G$ is $(K, \delta)$-invariant then
\begin{enumerate}
    \item $\abs{S} > 1/\kappa$ (hence $\lfloor \kappa\abs{S} \rfloor > \kappa\abs{S}/2$) and
    \item if $S^\prime \supset S$ is any set with $\abs{S^\prime} \le (1+\kappa)\abs{S}$ then $S^\prime$ is $(K, \eps)$-invariant.
\end{enumerate}
\end{lemma}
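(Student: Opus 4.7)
The plan is to choose $\kappa$ and $\delta$ both small compared to $\eps$; the concrete workable choice would be $\kappa = \delta = \eps/4$ (for $\eps \le 1$; the case $\eps > 1$ is trivial).

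For item (ii), I would argue by monotonicity of the intersection under inclusion. If $S \subseteq S'$ with $\abs{S'} \le (1+\kappa)\abs{S}$, then $gS \subseteq gS'$ for every $g \in K$, hence
\[
\abs{\bigcap_{g \in K}gS'} \ge \abs{\bigcap_{g \in K}gS} > (1-\delta)\abs{S} \ge \frac{1-\delta}{1+\kappa}\abs{S'}.
\]
With the above choice one has $(1-\delta)/(1+\kappa) > 1-\eps$, so $S'$ is $(K,\eps)$-invariant.

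For item (i), I would first observe that for $\delta < 1$ and any nonempty $(K,\delta)$-invariant $S$, the intersection $\bigcap_{g \in K}gS$ is nonempty, since its cardinality strictly exceeds $(1-\delta)\abs{S} > 0$. Taking any $x$ in it, the elements $\{g^{-1}x\}_{g \in K}$ are $\abs{K}$ distinct points of $S$, so $\abs{S} \ge \abs{K}$. Item (i) thus holds provided $\abs{K} > 1/\kappa = 4/\eps$, which is arranged by enlarging the given $K$ at the outset to any finite superset of cardinality exceeding $4/\eps$.

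The mild subtlety is that items (i) and (ii) impose opposite constraints on $\kappa$ (large versus small), so one must implicitly require $K$ to have cardinality at least a specified function of $\eps$; beyond this elementary bookkeeping I do not anticipate any genuine obstacle, consistent with the author's description as a basic exercise.
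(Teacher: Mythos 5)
The paper gives no proof of this lemma (it is declared ``a basic exercise'' and omitted), so your argument can only be judged on its own terms. Your item (ii) is correct: monotonicity of $\bigcap_{g \in K} gS$ under $S \subseteq S'$ together with the elementary inequality $(1-\eps/4)/(1+\eps/4) > 1-\eps$ settles it, and your choice $\kappa=\delta=\eps/4$ works for all $\eps$.

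Item (i) is where there is a genuine gap, and it is more than bookkeeping. Replacing $K$ by a superset $K'$ with $\abs{K'} > 4/\eps$ proves the conclusion only for sets that are $(K',\delta)$-invariant, and since $(K,\delta)$-invariance of $S$ does \emph{not} imply $(K',\delta)$-invariance (the implication runs the other way: a larger intersection index makes the condition harder to satisfy), this establishes a strictly weaker statement than the one claimed. Moreover, $\abs{S}\ge\abs{K}$ is essentially the best one can extract from $(K,\delta)$-invariance alone: if $\langle K\rangle$ is a finite subgroup $H$, every coset $Hx$ is $(K,\delta)$-invariant for all $\delta>0$ with $\abs{Hx}=\abs{H}$, and one checks that for $\eps \le 1/(\abs{H}+1)$ no $\kappa$ can satisfy (i) and (ii) simultaneously --- so the lemma as literally stated is false in this degenerate case, and your sense that an extra hypothesis is needed is accurate. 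Two honest repairs: (a) keep your enlargement, but present it as a (harmless, given how the lemma is applied in Theorem \ref{thm: local equiv prop}) strengthening of the hypothesis rather than as a proof of the stated lemma; or (b) keep $K$ fixed and assume only that $\langle K\rangle$ is infinite, in which case $(K,\delta)$-invariance gives $\abs{S \,\triangle\, gS} < 2\delta\abs{S}$ for $g \in K$, hence $S \cap wS \ne \emptyset$ for every word $w$ of length at most $n$ in $K \cup K^{-1}$ whenever $2n\delta < 1$; therefore $SS^{-1}$ contains the ball of radius $n$ in $\langle K\rangle$, giving $\abs{S}^2 \ge n+1$ and a lower bound on $\abs{S}$ that tends to infinity as $\delta \to 0$ without modifying $K$.
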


\begin{theorem}\label{thm: local equiv prop}
Let $G$ be an amenable group. The following are equivalent.
\begin{enumerate}
    \item It has a local tiling algorithm.
    \item It has a local comparison algorithm.
    \item It has a local multiset comparison algorithm.
\end{enumerate}
\end{theorem}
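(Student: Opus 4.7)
The plan is to prove the cycle $(i) \Rightarrow (iii) \Rightarrow (ii) \Rightarrow (i)$. The implication $(iii) \Rightarrow (ii)$ is immediate: specializing $m = 1$ identifies $\mathcal{UD}^1_{L,\alpha}$ with $\mathcal{UD}_{L,\alpha}$, $\mathcal{LD}^1_{L,\beta}$ with $\mathcal{LD}_{L,\beta}$, and the set-theoretic notions of (sub)equivalence with their multiset counterparts.

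For $(i) \Rightarrow (iii)$, given $m \in \Nb$, a finite $L \subset G$, and $0 \le \alpha < \beta \le m$, set $K = L \cup L^{-1}$ and pick $\eps > 0$ with $\alpha + m\eps < \beta - m\eps$. Invoke the local tiling algorithm for $(K, \eps)$ to obtain finite sets $F, D$ and a local map $S : \cC_F \to (2^D \sqcup \{\star\})^G$ producing exact $(K, \eps)$-invariant tilings. A standard F{\o}lner averaging argument, applied tile by tile, yields for every $(S_x(g), g)$ in $S_x$ the chain
\[
\sum_{h \in S_x(g)g} a_h < (\alpha + m\eps)\abs{S_x(g)} < (\beta - m\eps)\abs{S_x(g)} < \sum_{h \in S_x(g)g} b_h,
\]
the outer inequalities coming respectively from $(L, \eps)$-invariance of the shape together with $a \in \mathcal{UD}^m_{L,\alpha}$ and from $(L^{-1}, \eps)$-invariance together with $b \in \mathcal{LD}^m_{L,\beta}$. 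Since the shape lies in the fixed finite set $2^D$ and $(a|_{S_x(g)g}, b|_{S_x(g)g})$ takes values in the finite alphabet $\{0, \ldots, m\}$, one can canonically resolve the matching on each tile, e.g., by ordering its points via the relative position function and matching multi-occurrences greedily. The resulting $R_{(x, a, b)}$ depends only on a bounded window around each point, hence is local.

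For $(ii) \Rightarrow (i)$, begin with the local map furnished by Theorem \ref{thm: OW local}: for suitably chosen parameters, it produces a $(K, \eps/2)$-invariant, $(1-\eps/2)$-covering approximate tiling $S^0$ whose remainder depends locally on the coloring and has upper density at most $\eps/2$. For each tile center $g$, designate a bounded reservoir of ``attachment slots'' $X(g) \subset (A \setminus S^0(g))g$, for a fixed finite $A \supset D$, so that the total attachment set $\cA = \bigsqcup_g X(g)$ is disjoint, locally computable from $S^0$, and has lower density bounded below by some $\beta > \eps/2$. Applying the local comparison algorithm to the pair (remainder, $\cA$) yields a local injection of the remainder into $\cA$; assigning each remainder element to its image and appending it to the shape of the corresponding tile converts $S^0$ into an exact tiling. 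Choosing $\abs{A}$ small relative to typical tile sizes and running the initial OW step with invariance drastically finer than $(K, \eps)$ ensures that the enlarged shapes remain $(K, \eps)$-invariant and stay in a fixed finite family; locality of the whole procedure is preserved by Remark \ref{rem: local join composition}.

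The principal obstacle is $(ii) \Rightarrow (i)$: the attachment scheme must be simultaneously (a) locally computable from the OW coloring, (b) dense enough to absorb the whole remainder, and (c) small enough that absorption preserves both $(K, \eps)$-invariance and membership of the shapes in a fixed finite family of subsets of $G$. The tension between (b) and (c) is resolved by first fixing the target parameters and then running the Ornstein-Weiss step with much finer invariance, so that the relative distortion introduced by absorption is dominated by the slack. This is the type of balancing act already implicit in the exact-tiling constructions of \cite{KerSza20}, \cite{KerNar21}, \cite{Nar24}, and \cite{NarPet24} referenced at the start of the subsection.
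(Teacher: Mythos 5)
Your overall architecture is the same as the paper's: (iii)$\Rightarrow$(ii) is trivial, (i)$\Rightarrow$(iii) goes by matching $a$ and $b$ tile-by-tile on an exact tiling using a fixed lookup table over the finitely many (shape, data) pairs, and (ii)$\Rightarrow$(i) runs Ornstein--Weiss, reserves a locally computable target set of density exceeding that of the remainder, and uses local comparison to decide which tile absorbs each remainder point. The (i)$\Rightarrow$(iii) and (iii)$\Rightarrow$(ii) arguments are essentially identical to the paper's and are fine.

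The (ii)$\Rightarrow$(i) step, however, has a concrete defect in where you place the reservoir. You put the attachment slots \emph{outside} the tiles, $X(g)\subset (A\setminus S^0(g))g$ with $A\supset D$. First, since every shape is a subset of $D\subset A$, the requirement ``$\abs{A}$ small relative to typical tile sizes'' is unsatisfiable: $\abs{A}\ge\abs{D}\ge\abs{S^0(g)}$ for every tile. Second, and more importantly, the sets $Ag$ for nearby tile centers $g$ overlap heavily, so the pairwise disjointness of the $X(g)$ is not automatic and would itself require a local symmetry-breaking argument; you assert disjointness without providing one. Both problems disappear if you place the reservoir \emph{inside} each tile: choose $B(Q)\subset Q$ with $\abs{B(Q)}=\lfloor\kappa\abs{Q}\rfloor$ for each shape $Q$ (this is what the paper does). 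Disjointness is then inherited from disjointness of the tiles, the lower Banach density of $\bigsqcup_g B(S^0(g))g$ is bounded below by roughly $\kappa/2$ times the covering proportion of the footprint, and Lemma \ref{lem: amenable delta kappa} (run the OW step at invariance $(K,\delta)$ with $\delta$ chosen for the target $(K,\eps)$) guarantees that appending at most $\kappa\abs{S^0(g)}$ remainder points to each shape preserves $(K,\eps)$-invariance and keeps the shapes inside a fixed finite family. With that correction the rest of your argument, including the use of Remark \ref{rem: local join composition} for locality, goes through as in the paper.
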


\begin{proof}
It is clear that (iii) implies (ii). 

\medskip

\textbf{(ii) $\Longrightarrow$ (i)}. Assume that $G$ has a local comparison algorithm, let $K \subset G$ be finite and let $\eps > 0$. Let $1/2 > \kappa, \delta > 0$ be the constants provided by Lemma \ref{lem: amenable delta kappa}. By Theorem \ref{thm: OW local}, there are sets $F, D, L \subset G$ and a local function 
\[
S : \cC_F \to \brak{2^D \sqcup \{\star\}}^G
\]
that gives a $(K, \delta)$-invariant approximate tiling that is $(1 - \kappa/4)$-covering relative to $L$. 

For each $(K, \delta)$-invariant set $Q \subset D$ pick a subset $B(Q) \subset Q$ with 
\[
\abs{B(Q)} = \lfloor \kappa\abs{Q} \rfloor > \kappa\abs{Q}/2.
\]
Let $x \in \cC_F$, let $C(S_x)$ be the center set of the associated tiling and define a subset $B_x \subset \bigsqcup_{g \in C(S_x)}S_x(g)g$ by
\[
B_x \cap S_x(g)g = B(S_x(g))g
\]
for any $g \in C(S_x)$. In other words, $B_x$ picks a $\kappa$-proportion of elements out of every tile in a deterministic way and therefore depends locally on $x$. Additionally, let $A_x$ be the remainder of the tiling
\[
A_x = G \setminus \bigsqcup_{g \in C(S_x)}S_x(g)g.
\]
There is a sufficiently big F{\o}lner set $L^\prime$ that depends only on $D$ and $L$ such that
\[
\lD_{L^\prime}(B_x) > (1-\kappa/4)\kappa/2 > \kappa/3 \quad \mbox{and} \quad \uD_{L^\prime}(A_x) < \kappa/4.
\]

By assumption, $G$ has a local comparison algorithm hence there exist a finite sets $F^\prime$ containing $F$, a finite set $D^\prime$, and a local function $R$ of $\cC_{F^\prime} \times \mathcal{UD}_{L^\prime, \kappa/4} \times \mathcal{LD}_{L^\prime, \kappa/3}$ such that $R_{(x, A, B)}$ implemenets the subequivalence between $A$ and $B$. By Lemma \ref{lem: local coloring F prime F}, we may view the tiling $S$ as a local function of $\cC_{F^\prime}$. In particular, sets $A_x$ and $B_x$ constructed above then also depend locally on $\cC_{F^\prime}$ and therefore by Remark \ref{rem: local join composition}, the function $R_{(x, A_x, B_x)}$ is a local function of $\cC_{F^\prime}$ as well.

Fix an $x \in \cC_{F^\prime}$ and for each tile $(S_x(g), g)$, where $g \in C(S_X)$, consider the set $A_{x, g} = \{h \in A_x \mid R_{(x, A_x, B_x)}(h)h \in S_x(g)g\}$ of points in $A_x$ that are mapped into $S_x(g)g$ by the injection $R_{(x, A_x, B_x)}$. Define a new tiling $S^\prime$ depending on $\cC_{F^\prime}$ by setting
\[
S^\prime_x(g) = 
\begin{cases}
\star & \mbox{if} \ S_x(g) = \star, \\
S_x(g) \sqcup A_{x, g}g^{-1} & \mbox{otherwise}.
\end{cases}
\]
It is not difficult to see that $S^\prime$ is an exact tiling and is again a local function of $\cC_{F^\prime}$. Moreover, as $\abs{A_{x, g}} \le \kappa\abs{S_x(g)}$, by Lemma \ref{lem: amenable delta kappa} we have that the new shapes are $(K, \eps)$-invariant which finishes the proof.

\medskip

\textbf{(i) $\Longrightarrow$ (iii)}. Let $L \subset G$, let $m \in \Nb$, and let $0 \le \alpha < \beta \le m$ be given. There is a finite set $K \subset G$ and $\eps > 0$ such that if $a \in \mathcal{UD}^m_{L, \alpha}$, $b \in \mathcal{LD}^m_{L, \beta}$, and $Q$ is any $(K, \eps)$-invariant subset of $G$ then
\[
\sum_{g \in Q}a_g < \sum_{g \in Q}b_g.
\]

By assumption, there are finite sets $F, D \subset G$ and a $(K, \eps)$-invariant exact tiling $S$ that depends locally on $\cC_F$. For each $(K, \eps)$-invariant subset $Q \subset D$ there are finitely many possible pairs of elements $\mathrm{a} \in \{0, 1, \ldots, m\}^Q$ and $\mathrm{b} \in \{0, 1, \ldots, m\}^Q$ such that
\[
\sum_{g \in Q}\mathrm{a}_g < \sum_{g \in Q}\mathrm{b}_g.
\]
For each such pair choose a function $\mathrm{R}_{(Q, \mathrm{a}, \mathrm{b})}$ on $Q$ such that 
\[
\mathrm{R}_{(Q, \mathrm{a}, \mathrm{b})}(g) = (\mathrm{R}_{(Q, \mathrm{a}, \mathrm{b})}(g)_1, \mathrm{R}_{(Q, \mathrm{a}, \mathrm{b})}(g)_2, \ldots, \mathrm{R}_{(Q, \mathrm{a}, \mathrm{b})}(g)_{\mathrm{a}_g}) \in \brak{D^{-1}D}^{\mathrm{a}_g}
\]
and
\[
\abs{\{(i, h) \in \{1, \ldots, m\} \times Q \mid i \le \mathrm{a}_h \ \mbox{and} \ \mathrm{R}_{(Q, \mathrm{a}, \mathrm{b})}(h)_ih = g\}} \le \mathrm{b}_g.
\]

Now, fix elements $x \in \cC_F$, $a \in \mathcal{UD}^m_{L, \alpha}$, and $b \in \mathcal{LD}^m_{L, \beta}$. Let $g \in G$ and let $h \in C(S_x)$ be such that $g \in S_x(h)h$ (which exists since the tiling $S$ is exact). Let $\mathrm{a}, \mathrm{b} \in \{0, 1, \ldots, m\}^{S_x(h)}$ be given by
\[
\mathrm{a}(t) = a(th) \quad \mbox{and} \quad \mathrm{b}(t) = b(th).
\]
Define
\[
R_{(x, a, b)}(g)_i = \mathrm{R}_{(S_x(h), \mathrm{a}, \mathrm{b})}(gh^{-1})_i
\]
It is straightforward to check that $R$ depends locally on $x, a$, and $b$ and that it implements the subequivalence between $a$ and $b$.
\end{proof}

We now give some examples of groups that have local tiling algorithms. First of all, it is not difficult to see that the following lemma holds.

\begin{lemma}
The class of amenable groups with local tiling algorithms is closed under taking direct limits.
\end{lemma}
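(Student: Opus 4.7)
The plan is to reduce to the case of a countable increasing union $G = \bigcup_{n} G_n$ where each $G_n$ is amenable and admits a local tiling algorithm, and then to show that for any finite $K \subset G$ and $\eps > 0$, the local tiling algorithm for a sufficiently large $G_n$ transfers verbatim to $G$ using the same window and map.

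First I would choose $n$ large enough that $K \subset G_n$ and invoke the $G_n$-algorithm to obtain finite symmetric sets $F_n, D_n \subset G_n$ (with $e \in F_n$), a finite window $W_n \subset G_n$, and a map $\phi_n : \cC_{F_n}|_{W_n} \to 2^{D_n} \sqcup \{\star\}$ for which $S_{n, y}(h) := \phi_n((y_{h'h})_{h' \in W_n})$ is a $(K, \eps)$-invariant exact tiling of $G_n$ for every $y \in \cC_{F_n}$ (viewed in $G_n$). The central observation is that, by Remark \ref{rem: coloring extension}, since $F_n \cup W_n \subset G_n$, the restriction of $\cC_{F_n}$ to $W_n$ coincides when computed in $G$ or in $G_n$: both are simply the set of proper $F_n$-colorings of the induced subgraph on $W_n$. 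Thus $\phi_n$ can be applied to either. I would then set
\[
S_x(g) = \phi_n\brak{(x_{hg})_{h \in W_n}}
\]
for every $x \in \cC_{F_n}$ (now viewed in $G$) and every $g \in G$; by construction $S$ is a local function of $\cC_{F_n}$ with the same window $W_n$ and map $\phi_n$.

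It then remains to verify that each $S_x$ is a $(K, \eps)$-invariant exact tiling of $G$. The $(K, \eps)$-invariance of each nonempty shape $S_x(g) \subset G_n$ is immediate from the corresponding property in $G_n$, because $K \subset G_n$ and the invariance condition involves only left multiplication by $K$. For exactness I would decompose $G$ into right cosets $G = \bigsqcup_t G_n t$. For any $g = ht \in G_n t$, the locality formula evaluates to $S_x(ht) = S_{n, x^{(t)}}(h)$, where $x^{(t)} \in \cC_{F_n}(G_n)$ is the coloring given by $x^{(t)}_{h'} := x_{h't}$. Hence the tiles of $S_x$ that lie in $G_n t$ are exactly the right translates by $t$ of the tiles of the exact tiling $S_{n, x^{(t)}}$ of $G_n$; they therefore partition $G_n t$, and summing over cosets yields an exact tiling of $G$.

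I do not expect a substantive obstacle: the argument is essentially a bookkeeping check that locality and exactness are both coset-local properties, so the $G_n$-algorithm extends to $G$ with no real modification. The only mildly delicate point is ensuring the space of window-colorings is insensitive to the ambient group, which is precisely what Remark \ref{rem: coloring extension} guarantees.
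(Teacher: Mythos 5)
Your argument is correct, and it is precisely the intended one: the paper omits the proof as an easy observation, and the key point is exactly what you identify, namely that for $F, W, D$ contained in $G_n$ the Cayley graph $\mathrm{Cay}(G,F)$, the restriction $\left.\cC_F\right|_W$, and the tiles all decompose along right cosets of $G_n$, so the $G_n$-algorithm applies coset by coset and produces an exact $(K,\eps)$-invariant tiling of $G$ with the same window and map.
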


The following theorem is due to Downarowicz and Zhang \cite{DowZha23}. We remark that in fact they prove a stronger result --- there exists a comparison algorithm that depends locally only on the sets $A$ and $B$ (and does not require an element in $\cC_F$). This corresponds to the difference between considering all actions of $G$ versus only free actions of $G$.

\begin{theorem}
Groups of locally subexponential growth have local comparison algorithms (and hence also local tiling algorithms by Theorem \ref{thm: local equiv prop}).
\end{theorem}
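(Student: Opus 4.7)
The plan is to localize the comparison theorem of Downarowicz--Zhang \cite{DowZha23} for subexponential growth groups. By the preceding lemma the class of groups with local tiling algorithms (equivalently, by Theorem~\ref{thm: local equiv prop}, with local comparison algorithms) is closed under direct limits, so it suffices to fix a finitely generated group $G$ of subexponential growth. The essential resource I would exploit is that for any finite $K \subset G$ and $\eps > 0$ there exist symmetric F{\o}lner sets $F$ with $\abs{KF}/\abs{F} < 1+\eps$; equivalently, nested F{\o}lner sequences whose consecutive terms grow subpolynomially.

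Fix $L \finsub G$ and $0 \le \alpha < \beta \le 1$, and set $\gamma = (\beta-\alpha)/3$. First, I would apply Theorem~\ref{thm: OW local} to produce, locally from an element of $\cC_{F'}$ (enlarging the alphabet via Lemma~\ref{lem: local coloring F prime F} as needed), an $(L, \delta)$-invariant $(1-\delta)$-covering approximate tiling $S_x$, where $\delta$ will be chosen much smaller than $\gamma$ at the end. The invariance forces, on every tile $(T, g)$ of $S_x$,
\[
\abs{A \cap Tg} < (\alpha + \gamma)\abs{T} \quad \text{and} \quad \abs{B \cap Tg} > (\beta - \gamma)\abs{T}
\]
for any $A \in \mathcal{UD}_{L, \alpha}$ and $B \in \mathcal{LD}_{L, \beta}$, so $B$ has at least $\gamma\abs{T}$ unused capacity per tile beyond what is needed to absorb $A$. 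For every possible shape $T \in 2^D$ and every pair of subsets $\mathrm{a}, \mathrm{b} \subset T$ satisfying this inequality I would fix once and for all a deterministic injection $\mathrm{a} \hookrightarrow \mathrm{b}$. Applied tile-by-tile, these assemble into a local partial map from the covered part of $A$ into $B$ with a $\gamma\abs{T}$ surplus in each tile.

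The main obstacle, and the place where subexponential growth (rather than mere amenability) is essential, is routing the remainder $A \cap (G \setminus \bigsqcup_g S_x(g)g)$ into this surplus in a local way. My plan here is a second round at a much coarser scale $F'' \supset F$, chosen so that $\abs{(F^{-1}F)F''}/\abs{F''} < 1+\eps$; this is precisely the F{\o}lner ratio condition that fails for exponentially growing amenable groups but holds for subexponential ones. Running the same algorithm at scale $F''$ partitions $G$ into macro-tiles, each containing roughly $\abs{F''}/\abs{F}$ fine tiles. Inside a macro-tile the aggregate surplus in $B$ is $\gtrsim \gamma \abs{F''}$, while the remainder of $A$ has size $\lesssim (\delta + \eps)\abs{F''}$ once coarse-boundary contributions are bookkept. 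With $\delta$ and $\eps$ tiny compared to $\gamma$, a deterministic lookup table indexed by the macro-shape routes the remainder into the surplus, and composing the two rounds produces the required local function $R$ on $\cC \times \mathcal{UD}_{L, \alpha} \times \mathcal{LD}_{L, \beta}$ implementing the subequivalence $A \to B$.
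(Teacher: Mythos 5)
First, note that the paper does not actually prove this statement: it is quoted from Downarowicz--Zhang \cite{DowZha23}, who prove an even stronger form (a comparison algorithm depending locally on $A$ and $B$ alone). So the relevant question is whether your sketch is a correct self-contained argument, and it has a genuine gap.

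The decisive problem is the remainder of the \emph{macro}-tiling. Theorem \ref{thm: OW local} only produces a $(1-\eps)$-covering approximate tiling at every scale, so after your second round there is still a set of positive upper Banach density lying outside all macro-tiles, and any point of $A$ sitting in the fine remainder \emph{and} outside every macro-tile is never assigned an image in $B$. A local comparison algorithm must implement a genuine injection defined on all of $A$, so this leftover is fatal, and no finite number of rounds of Ornstein--Weiss-type approximate tilings can eliminate it --- this is exactly the obstruction the paper highlights when explaining why the local Ornstein--Weiss algorithm only yields URP and why \emph{exact} local tilings are needed for URPC. Relatedly, your sketch never actually uses subexponential growth: the condition $\abs{KF''}/\abs{F''}<1+\eps$ for some F{\o}lner set $F''$ is just the F{\o}lner condition and holds for every amenable group (e.g.\ the lamplighter group, which has exponential growth). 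If your argument worked as written it would therefore give local comparison for \emph{all} amenable groups, hence (via Theorems \ref{thm: local equiv prop} and \ref{thm: FCSB and local alg imply URPC}) URPC well beyond the class $\cS$, which is an open problem. The place where subexponential growth genuinely enters the Downarowicz--Zhang argument --- roughly, that one can run a hierarchy of scales built from balls with $\abs{B_{n+1}}/\abs{B_n}$ close to $1$ and absorb the leftovers exactly rather than approximately --- is precisely the step your sketch elides with ``once coarse-boundary contributions are bookkept.'' The first round of your construction (the per-shape lookup table matching $A\cap Tg$ into $B\cap Tg$ with a $\gamma\abs{T}$ surplus) is fine and is the same device the paper uses in the proof of Theorem \ref{thm: local equiv prop}, but the routing of the remainder is the whole difficulty and is not resolved.
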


The next theorem is a translation of the main result in \cite{Nar24} into the language of local algorithms.

\begin{theorem}\label{thm: local tiling extension}
If $H \lhd G$ is an infinite normal subgroup with a local tiling algorithm then $G$ also has a local tiling algorithm.
\end{theorem}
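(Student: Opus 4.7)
The plan is to translate the topological-dynamical extension argument of \cite{Nar24} into the combinatorial, local-algorithm setting of this section. The target is a local function that, from a coloring of $G$, outputs an exact $(K,\eps)$-invariant tiling of $G$. The shapes will be of the form $B\cdot\tilde F$, where $B\subset H$ is an $H$-tile shape produced by the hypothesized local algorithm on $H$ and $\tilde F\subset G$ is a finite transversal lifting a F\o{}lner-type set $F\subset G/H$.

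First, using amenability of $G/H$ (which holds because $G$ itself must be amenable to admit a local tiling algorithm), I would fix a $(\pi(K),\eps/8)$-invariant finite set $F\subset G/H$ with $e\in F$, and a transversal $\tilde F\subset G$ lifting $F$. I would then form a finite enlargement $K_H\subset H$ of $K\cap H$ that absorbs all conjugates and coset-cocycle corrections that can arise from left multiplication by $K$ acting on sets of the form $B\tilde F$:
\[
K_H:=(K\cap H)\cup\bigl(\{k\sigma\sigma'^{-1}:k\in K,\ \sigma,\sigma'\in\tilde F\}\cap H\bigr)\cup\bigcup_{t\in\tilde F\tilde F^{-1}}t(K\cap H)t^{-1}.
\]
The set $K_H$ is finite because $H\lhd G$. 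Apply the local $H$-algorithm with parameters $(K_H,\delta)$ for $\delta\ll\eps/|\tilde F|$ to obtain a window $F_H\subset H$, a shape domain $D_H\subset H$, and a local function $S_H:\cC_{F_H}\to(2^{D_H}\sqcup\{\star\})^H$ producing exact $(K_H,\delta)$-invariant tilings of $H$.

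Next, fix a $G$-window $F_G\supset F_H\tilde F\cup\tilde F$ and, given a coloring $x\in\cC_{F_G}$, restrict $x$ to each coset $Hg$ via the identification $h\mapsto hg$; the restriction is a proper $F_H$-coloring of $H$, so $S_H$ yields an exact $H$-tiling of every coset. The central and hardest step is then to glue the $H$-tiles across cosets. I would call an $H$-tile $(B,h)$ inside a coset $Hg$ a \emph{$G$-anchor} if, for every nontrivial $t\in\tilde F$, the element $th$ is the center of an $H$-tile in the coset $Htg$ whose shape equals $tBt^{-1}$ corrected by the extension cocycle of $1\to H\to G\to G/H\to 1$. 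Compatibility is a bounded-radius local check in $x$ because all involved $H$-tiles are produced locally by $S_H$. The enlargements absorbed into $K_H$ ensure that each candidate $G$-tile $(B\tilde F,h)$ is $(K,\eps/2)$-invariant via the identity
\[
k\cdot B\tilde F\cdot h=(kBk^{-1})(k\tilde F)\cdot h
\]
together with $\pi(K)$-F\o{}lnerness of $F$ and the left $(K_H,\delta)$-invariance of $B$. The accepted anchors yield pairwise disjoint $G$-tiles whose union has lower Banach density greater than $1-\eps/2$ in $G$.

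Finally, to upgrade this approximate tiling to an \emph{exact} one, I would invoke Theorem \ref{thm: local equiv prop}: the local tiling algorithm on $H$ is equivalent to a local multiset comparison algorithm, so the low-density remainder of $G$ can be locally absorbed into the existing tile bases by enlarging each $G$-shape by a bounded amount, which is paid for by the margin between $\delta$ and $\eps$ while preserving $(K,\eps)$-invariance. Each step — restriction to cosets, applying $S_H$, the compatibility test, and the redistribution — is local with explicit window bounds, so by Remark \ref{rem: local join composition} their composition $x\mapsto S_x^G$ is local, establishing that $G$ has a local tiling algorithm. The principal obstacle is the gluing and its cocycle bookkeeping: verifying that conjugation by $\tilde F$ and the possibly non-split nature of the extension $1\to H\to G\to G/H\to 1$ do not destroy too many anchors under the compatibility test. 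This is exactly the content of the extension argument of \cite{Nar24}, here recast for local functions on the shift $\cC_{F_G}$, and it is what forces the elaborate enlargement used to define $K_H$.
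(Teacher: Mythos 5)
There is a genuine gap at the central gluing step, and it is precisely the step your proposal flags as "the hardest." You propose to call an $H$-tile $(B,h)$ in the coset $Hg$ an anchor when, for every $t\in\tilde F$, the element $th$ is the center of an $H$-tile in $Htg$ with a conjugate-corrected shape. But the tilings of distinct cosets are obtained by applying the local function $S_H$ to the \emph{independent} restrictions of the coloring to those cosets; nothing coordinates them, so there is no reason a tile of the coset $Htg$ should be centered at $th$, let alone have the prescribed shape $tBt^{-1}$. Generically the set of anchors is empty, and the claim that the accepted anchors cover a set of lower Banach density greater than $1-\eps/2$ is unsupported. The final absorption step then has nothing to absorb into. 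No amount of enlarging $K_H$ fixes this, because the obstruction is not invariance of the candidate shapes but the nonexistence of compatible candidates.

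The paper's proof sidesteps the construction of $G$-shapes spanning several cosets entirely. By Theorem \ref{thm: local equiv prop}, the $(K,\eps)$-invariant approximate tilings of $G$ are already supplied locally by the Ornstein--Weiss algorithm (Theorem \ref{thm: OW local}), and the only missing ingredient for exactness is a \emph{local comparison algorithm} for $G$. So the paper proves just that: given $A\in\mathcal{UD}_{L,\alpha}$ and $B\in\mathcal{LD}_{L,\beta}$, it uses the exact $H$-tiling of each coset (whose tiles are chosen large, of cardinality at least $3\abs{L}/\gamma$) to redistribute $A$ and $B$ evenly over $L$-translates within each tile, producing multiset functions $a,b$ whose densities along large F{\o}lner sets of $H$ are separated by $\gamma/3$ \emph{coset by coset}; then $H$'s local multiset comparison algorithm (again via Theorem \ref{thm: local equiv prop}) implements the subequivalence, and composing the three local maps finishes the proof. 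Everything happens inside single $H$-cosets, so no cocycle bookkeeping, no transversal $\tilde F$, and no amenability of $G/H$ is needed. If you want to rescue your outline, the realistic route is to abandon the anchors and prove the comparison statement instead.
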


\begin{proof}
By Theorem \ref{thm: local equiv prop} it is enough to show that $G$ has a local comparison algorithm. Let $L \subset G$ be finite and let $0 \le \alpha < \beta \le 1$. Set $\gamma = \beta - \alpha > 0$ and choose $K \subset H$ and $\eps > 0$ so that any $(K, \eps)$-invariant subset of $H$ has cardinality at least $3\abs{L}/\gamma$. Since $H$ has a local tiling algorithm, there is a finite symmetric set $F \subset H$ and a $(K, \eps)$-invariant exact tiling $S$ that depends locally on $\cC_F$. For any shape $Q$ occurring in the tiling $S$ and any subset $\mathrm{C} \subset Q$ choose an arbitrary partition 
\[
\mathrm{C} = \bigsqcup_{g \in L}\mathrm{C}_g
\]
such that 
\[
\abs{\mathrm{C}_g} \in \left\{\lceil\frac{\abs{C}}{\abs{L}}\rceil, \lfloor\frac{\abs{C}}{\abs{L}}\rfloor\right\}
\]
for any $g \in L$. 

We remark that the group $G$ can be written as a disjoint union of cosets of $H$ and an $F$-proper coloring of $G$ is simply a collection of $F$-proper colorings for each coset of $H$. Hence, given an element $x \in \cC_F$, we can apply $S$ to get an exact $(K, \eps)$-invariant tiling of each coset of $H$ and thus of the whole group $G$. Employing a slight abuse of notation we will denote this tiling by $S$ as well.

Let $C \subset G$ be an arbitrary set, let $x \in \cC_F$, and let $g \in C$. Since $S_x$ is an exact tiling of $G$, there is some $h \in C(S_x)$ such that $g \in S_x(h)h$. Set $\mathrm{C} = (C \cap S_x(h)h)h^{-1} \subset S_x(h)$ and define
\[
\tilde{R}_{x, C}(g) = r^{-1} \in L^{-1} \quad \mbox{if} \quad gh^{-1} \in \mathrm{C}_r.
\]
The function $\tilde{R}$ depends locally on $x$ and $C$ and implements an equivalence between $C$ and some element $c \in \{0, 1, \ldots, \abs{L}\}^G$.

Let $A \in \mathcal{UD}_{L, \alpha}$, let $B \in \mathcal{LD}_{L, \beta}$, let $x \in \cC_F$, and let $a, b \in \{0, 1, \ldots, \abs{L}\}^G$ be obtained using $\tilde{R}$ from $A$ and $B$ respectively. Thus, $\tilde{R}_{x, A}$ implements the equivalence between $A$ and $a$ and there exists a function which we denote by $\tilde{R}_{x, b}$ (the ``inverse'' of $\tilde{R}_{x, B}$) that implements the equivalence between $b$ and $B$. One may check that, by construction, there exists a sufficiently big F{\o}lner set $L^\prime \subset H$ (independent of $A$, $B$, and $x$) such that
\[
\sum_{g \in L^{\prime}}a_{gh} < (\alpha + \gamma/3)\abs{L^\prime} < (\alpha + 2\gamma/3)\abs{L^\prime} < \sum_{g \in L^{\prime}}b_{gh}
\]
for any $h \in G$. Since $H$ has a local multiset comparison algorithm by Theorem \ref{thm: local equiv prop}, there is a local function $R$ such that $R_{(x, a, b)}$ implements the subequivalence between $a$ and $b$ (here we again enlarged the set $F$ using Lemma \ref{lem: local coloring F prime F}). Finally, the subequivalence between $A$ and $B$ is obtained by composing
\[
\tilde{R}_{x, b} \circ R_{(x, a, b)} \circ \tilde{R}_{x, A}.
\]
\end{proof}

\section{Mean dimension and shift embeddability for actions of amenable groups}
\label{sec: mdim and shift embed}

\subsection{Encoding URPC covers}\label{ssec: encoding URPC}

\begin{definition}
Let $F : X \to Z$ be a function and let $U \subset X$ be a set. We say that $F$ \emph{encodes} $U$ if 
\[
F^{-1}(F(U)) = U.
\]
Equivalently, for every pair of points $x, y \in X$ such that $x \in U$ and $y \not \in U$, we have
\[
F(x) \ne F(y).
\]
In other words, by knowing the value $F(x)$ we can determine whether it is in $U$ or not.

We say that $F$ encodes a collection of sets $\{U_i\}_{i \in I}$ if it encodes each of the sets $U_i$ in the collection.
\end{definition}

\begin{definition}
Let $f : X \to Y$ be some function on $X$. We associate to it a function $I_f : X \to Y^G$ defined by
\[
I_f(x) = \brak{f(gx)}_{g \in G}.
\]
If $f$ is continuous then so is $I_f$.
\end{definition}

We collect some easy but important observations.

\begin{remark}\label{rem: encoding shift set}
If $I_f$ encodes $U$ then it also encodes $gU$ for every $g \in G$.
\end{remark}

\begin{corollary}\label{cor: encoding element recovers URPC}
Let $K \subset G$ be finite, let $\eps > 0$, let $\brak{\{S_i\}_{i=1}^n, \{V_i\}_{i=1}^n, \{g_j\}_{j=1}^m, \{U_j\}_{j=1}^m}$ be a $(K, \eps)$-URPC witness, and let $f : X \to Y$ be a function. If $I_f$ encodes the collection
\[
\{V_i\}_{i=1}^n \sqcup \{U_j\}_{j=1}^m
\]
then it encodes the whole URPC cover 
\[
\{gV_i\}_{1 \le i \le n, g \in S_i} \sqcup \{U_j\}_{j=1}^m.
\]
\end{corollary}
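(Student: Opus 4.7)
The plan is to derive this corollary as an essentially immediate consequence of Remark \ref{rem: encoding shift set}. The key observation is that the encoding function $I_f : X \to Y^G$ carries a compatible equivariance structure: if we let $\sigma_g$ denote the natural shift on $Y^G$ given by $(\sigma_g y)_h = y_{hg}$, then a direct unwinding of definitions yields
\[
I_f(gx) = \sigma_g I_f(x) \quad \text{for all } x \in X,\ g \in G.
\]
Consequently, whether a point $x$ lies in $gU$ can be determined from $I_f(x)$ via the shift $\sigma_{g^{-1}}$, which is precisely the content of Remark \ref{rem: encoding shift set}.

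With this in hand, the proof is a one-line verification. By hypothesis, $I_f$ encodes each base $V_i$ for $1 \le i \le n$; applying Remark \ref{rem: encoding shift set} to every element of the (finite) shape set $S_i$, we conclude that $I_f$ also encodes every translate $gV_i$ with $g \in S_i$. Since $I_f$ is additionally assumed to encode each remainder cover element $U_j$ for $1 \le j \le m$, it encodes every set in
\[
\{gV_i \colon 1 \le i \le n,\ g \in S_i\} \sqcup \{U_j\}_{j=1}^m = C(\cW),
\]
which is exactly the URPC cover by Definition \ref{def: URPC witness}.

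There is no real obstacle here: the statement is essentially a bookkeeping consequence of the shift-equivariance of the assignment $f \mapsto I_f$ and the finiteness of the shapes $S_i$. The corollary is recorded because it is this small observation that makes the notion of an \emph{encoding element} $E(\cW) = \sum_i [V_i] + \sum_j [U_j]$ meaningful: only the $n + m$ "generating" sets need to be encoded by $I_f$ in order to reconstruct the entire URPC witness, and this is what will subsequently allow a whole sequence of URPC witnesses to be encoded using a single function supported on a small marker set (as in Lemma \ref{lem: encoding URPC 2-dim}).
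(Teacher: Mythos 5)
Your proof is correct and is exactly the argument the paper intends: the corollary is left unproved in the paper precisely because it follows immediately from Remark \ref{rem: encoding shift set} applied to each translate $gV_i$ with $g \in S_i$, together with the hypothesis on the $U_j$. Your verification of the shift-equivariance $I_f(gx) = \sigma_g I_f(x)$ correctly supplies the content of that remark.
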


\begin{remark}\label{rem: encoding set and collection}
Let $U$ be a set and suppose that 
\[
U = \bigcup_{i=1}^kU_i
\]
for some finite collection of sets $U_i$. If $I_f$ encodes the collection $\{U_i\}_{i=1}^k$ then it also encodes the set $U$. By Remark \ref{rem: encoding shift set}, this assumption is equivalent to $I_f$ encoding the shifted collection $\{g_iU_i\}_{i=1}^k$ for any choice of elements $g_i \in G$.
\end{remark}

We now prove the main result of this subsection.

\begin{lemma}\label{lem: encoding URPC 2-dim}
Let $G \act X$ be an action with URPC and let $U$ be a marker set. Then there exists a continuous function $\phi = (\phi_1, \phi_2) : X \to [0, 1]^2$ supported on $U$ such that for any finite $K \subset G$ and $\eps > 0$ the map $I_\phi$ encodes some URPC cover for $K$ and $\eps$.

Moreover, $\phi$ can be chosen so that the open support of both $\phi_1$ and $\phi_2$ is equal to $U$ and so that $\phi_1(x) > \phi_2(x)$ for any $x \in U$. 
\end{lemma}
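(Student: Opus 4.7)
My approach is to build $\phi$ as a uniform limit of continuous functions that successively encode one URPC witness at a time, leveraging the reductions provided by the preceding machinery. First, enumerate parameters $(K_n, \eps_n)$ with $K_n \nearrow G$ and $\eps_n \searrow 0$; for each $n$, URPC furnishes a $(K_n, \eps_n)$-URPC witness $\cW_n = \brak{\{S_i^n, V_i^n\}_i, \{g_j^n, U_j^n\}_j}$. By Corollary \ref{cor: encoding element recovers URPC}, to encode the URPC cover of $\cW_n$ it suffices that $I_\phi$ encode only the bases $V_i^n$ and the remainder sets $U_j^n$; the translates $gV_i^n$ are then encoded automatically by Remark \ref{rem: encoding shift set}.

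Second, I reduce further to encoding open subsets of $U$. Since $U$ is a marker set, pick a finite $M \subset G$ with $e \in M$ and $MU = X$. For any open $W \subset X$, write $W = \bigcup_{m \in M}(W \cap mU)$; Remarks \ref{rem: encoding shift set} and \ref{rem: encoding set and collection} then show that encoding $W$ reduces to encoding the shifted pieces $m^{-1}(W \cap mU) \subset U$. Applying this to every $V_i^n, U_j^n$ yields a countable family $\{\tilde W_k\}_{k \in \Nb}$ of open subsets of $U$ that $I_\phi$ must encode.

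Next, I construct $\phi = (\phi_1, \phi_2)$ inductively. Begin with $\phi^{(0)}$ defined by $\phi_1^{(0)}(x) = \rho(x, X \setminus U)$ (rescaled into $[0,1]$) and $\phi_2^{(0)} = \tfrac{1}{2}\phi_1^{(0)}$, which has open support equal to $U$ for both coordinates and a strictly positive gap $\phi_1^{(0)} - \phi_2^{(0)}$ bounded below on every compact subset of $U$. Given $\phi^{(n-1)}$ that encodes $\tilde W_1, \ldots, \tilde W_{n-1}$, select a small open patch $A_n$ with $\overline{A_n} \subset U$, strictly separated from the finitely many ``coding regions'' used at earlier steps, and perturb $\phi^{(n-1)}$ on $A_n$ by a function of sup-norm less than $2^{-n}$ to produce $\phi^{(n)}$ that additionally encodes $\tilde W_n$. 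The limit $\phi = \lim_n \phi^{(n)}$ is continuous; because perturbations have supports separated from $\partial U$ and from each other, the open-support condition and the strict inequality $\phi_1 > \phi_2$ survive in the limit.

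The main obstacle is the inductive step: producing, on the chosen patch $A_n$, a perturbation that makes $I_\phi$ encode $\tilde W_n$ without disturbing the earlier encodings or violating the constraints. The mechanism I expect to use is to exploit the $(K_n, \eps_n)$-invariant shape $S_i^n$ of the castle: for $n$ large enough, there is room in $S_i^n$ to pick a ``coding element'' $t_n \in G$ that is fresh with respect to the group elements used at the previous steps, and then to engineer the perturbation on $A_n$ so that the value $\phi(t_n x)$ of the orbit code at coordinate $t_n$ becomes a discriminating flag for membership in $\tilde W_n$. Together with the fact that $\phi > 0$ on all of $U$ (which separates $x \in \tilde W_n \subset U$ from any $y \notin U$ automatically), this gives $I_\phi(x) \neq I_\phi(y)$ for every pair with $x \in \tilde W_n$, $y \notin \tilde W_n$. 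Executing this inductive step carefully --- so that freshness of $t_n$ and the geometry of $A_n$ interact cleanly with the accumulated encodings and with the constraints $\phi_1 > \phi_2$ and open support $U$ --- is the technical heart of the argument.
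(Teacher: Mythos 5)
Your opening reductions are sound: encoding only the bases and the remainder sets suffices by Corollary \ref{cor: encoding element recovers URPC}, and chopping each such set along translates of $U$ reduces everything to a countable family of open subsets of $U$. But the heart of the lemma is how a \emph{single} continuous map $\phi : X \to [0,1]^2$ can encode that entire family, and here your plan has two genuine gaps. First, encoding is not stable under small perturbations: if $\phi^{(n-1)}$ encodes $\tilde W_1$, the images $\phi^{(n-1)}(\tilde W_1)$ and $\phi^{(n-1)}(X \setminus \tilde W_1)$ are disjoint but in general not at positive distance (continuity forces their closures to meet along $\partial \tilde W_1$), so a perturbation of sup-norm $2^{-n}$ --- even one supported on a patch $A_n$ far from $\partial \tilde W_1$ --- can create a collision $\phi^{(n)}(y) = \phi^{(n)}(x)$ with $y \in A_n \setminus \tilde W_1$ and $x \in \tilde W_1$, destroying the earlier encoding. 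Second, the ``fresh coordinate $t_n$'' mechanism does not buy you anything: $\phi(t_n x)$ is just $\phi$ evaluated at the point $t_n x$, so for it to flag membership in $\tilde W_n$ you need $\phi$ itself to encode the translate $t_n \tilde W_n$, and you are back to the original problem of making one planar-valued continuous function encode countably many open sets with no control on how their boundaries interleave. In general this cannot be done without imposing structure on the family.

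The paper supplies exactly that structure, and this is the idea missing from your proposal. The subequivalence $E(\cW) = \sum_i [V_i] + \sum_j [U_j] \prec U$ lets one replace the sets to be encoded at stage $1$ by a finite collection $\{W^1_k\}$ of open sets with \emph{pairwise disjoint closures inside} $U$ whose encoding implies the encoding of (slight shrinkings of) the $V_i$ and $U_j$; moreover $W^1 = \bigcup_k W^1_k$ is again a marker set, because the encoding element is subequivalent to it, so the stage-$2$ data can be pushed into $W^1$, and so on. The resulting family is laminar: at each level the closures are pairwise disjoint, and each set at level $m$ sits inside a single set at level $m-1$. Its incidence structure is therefore a rooted tree $T$, and since $T \cup \partial T$ is compact and one-dimensional it embeds in $[0,1]^2$; composing the map $X \to T \cup \partial T$ that records which node a point occupies with such an embedding produces $\phi$ in one step, with the open-support condition and the inequality $\phi_1 > \phi_2$ arranged through the choice of the embedding. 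Without the disjointness-and-nesting step your countable family $\{\tilde W_k\}$ carries no such tree structure, and the construction of $\phi$ cannot be completed as described.
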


\begin{proof}
Choose an increasing sequence of finite set $K_m$ with $\bigcup_{m \in \Nb}K_m = G$ and a sequence of numbers $\eps_m > 0$ with $\eps_m \to 0$. Use Lemma \ref{lem: eps levels subeq marker} to find a $(K_1, \eps_1)$-URPC witness, say $\brak{\{S_i\}_{i=1}^n, \{V_i\}_{i=1}^n, \{g_j\}_{j=1}^m, \{U_j\}_{j=1}^m}$, such that its encoding element $E$ satisfies
\[
E = \sum_{i=1}^n[V_i] + \sum_{j=1}^m[U_j] \prec U.
\]
By Remark \ref{rem: URPC cover stable}, we can find some $\delta > 0$ such that for any open sets $V_i^\prime$ and $U_j^\prime$ with $V_i^{-\delta} \subset V_i^\prime \subset V_i$ and $U_j^{-\delta} \subset U_j^\prime \subset U_j$, the data $\brak{\{S_i\}_{i=1}^n, \{V_i\}_{i=1}^n, \{g_j\}_{j=1}^m, \{U_j\}_{j=1}^m}$ is still a a $(K_1, \eps_1)$-URPC witness. Note that we clearly have that
\[
\sum_{i=1}^n\sqbrak{\, \overline{V_i^{-\delta}}\, } + \sum_{j=1}^m\sqbrak{\, \overline{U_j^{-\delta}}\, } \prec U.
\]
Now, using the definition of subequivalence of sets, Remark \ref{rem: subeq cover properties}, and Remark \ref{rem: encoding set and collection}, we can find some finite collection $\{W^1_k\}_{k \in J_1}$ of open sets such that
\begin{enumerate}
    \item $\overline{W^1_k}$ are pairwise disjoint subsets of $U$ and
    \item if $f : X \to Y$ is a function such that $I_f$ encodes the collection $\{W^1_k\}_{k \in J_1}$ then it also encodes some collection $\{V_i^\prime\}_{i=1}^n \sqcup \{U_j^\prime\}_{j=1}^m$ with $V_i^\prime$ and $U_j^\prime$ as above.
\end{enumerate}
It follows from Corollary \ref{cor: encoding element recovers URPC} that if $f$ is as in item (ii), $I_f$ encodes a URPC cover for a $K_1$ and $\eps_1$.

Set 
\[
W^1 = \bigcup_{k \in J_1}W^1_k \subset U
\]
and observe that it is a marker set, since
\[
\sum_{i=1}^n\sqbrak{\, \overline{V_i^{-\delta}}\, } + \sum_{j=1}^m\sqbrak{\, \overline{U_j^{-\delta}}\, } \prec W^1.
\]
Thus, using the same argument, we can inductively construct collections $\{W^m_k\}_{k \in J_m}$ for $m \ge 2$ such that
\begin{enumerate}
    \item $\overline{W^m_k}$ are pairwise disjoint subsets of $W^{m-1}$ and
    \item if $I_f$ encodes the collection $\{W^m_k\}_{k \in J_m}$ then it also encodes a URPC cover for $K_m$ and $\eps_m$,
\end{enumerate}
where $W^m = \bigcup_{k \in J_m}W^m_k$. Without loss of generality, we may assume that any set $W^m_k$ lies in a single set from the collection $\{W^{m-1}_k\}_{k \in J_{m-1}}$. For consistent notation we also set $W^0_1 = W^0 = U$ and $J_0 = \{1\}$. By item (ii), to finish the proof it is sufficient to find a function $\phi : X \to [0, 1]^2$ supported on $U$ such that $\phi$ (and thus also $I_\phi$) encodes all of the collections $\{W^m_k\}_{k \in J_m}$.

Consider a rooted tree $T$ defined as follows. 
\begin{itemize}
    \item For all $m \ge 0$ there are $\abs{J_m}$ vertices on levels $2m$ and $2m+1$ which we denote by $\{v^0_{m, k}\}_{k \in J_m}$ and $\{v^1_{m, k}\}_{k \in J_m}$ respectively.
    \item Every vertex $v^0_{m, k}$ is connected to $v^1_{m, k}$ and these are the only edges between levels $2m$ and $2m+1$.
    \item A vertex $v^1_{m, k}$ on level $2m+1$ is connected to a vertex $v^0_{m+1, l}$ on level $2m+2$ if and only if the set $W^{m+1}_l$ is a subset of $W^m_k$.
\end{itemize}

The topological space $T \cup \partial T$ is a one-dimensional and compact, and there exists an embedding $\tau = (\tau_1, \tau_2) : T \cup \partial T \to [0, 1]^2$ which maps the root $v^0_{0, 1}$ to the point $(0, 0)$. We may additionally choose this embedding so that it satisfies 
\[
\tau_1(t) > \tau_2(t)
\]
for any point $t \in T \cup \partial T$ and so that $\tau(T \cup \partial T \setminus \{v^0_{0, 1}\}) \subset (0, 1)^2$.

For a vertex $v^0_{m, k}$ let $T^m_k$ be the subtree rooted at $v^0_{m, k}$. Naturally, $T^m_k \cup \partial T^m_k$ is a compact subset of $T \cup \partial T$.

Now choose a continuous function $\phi : X \to \tau\brak{T \cup \partial T} \subset [0, 1]^2$ such that 
\[
\phi(x) \in \tau\brak{T^m_k \cup \partial T^m_k \setminus \{v^0_{m, k}\}} \quad \Longleftrightarrow \quad x \in W^m_k.
\]
For instance, this function can be constructed as a limit of functions corresponding to finite parts of the tree. By construction, $\phi$ encodes every set $W^m_K$ and has the additional required properties by the choice of $\tau$.
\end{proof}

The lemma above is sufficient to deal with the case $M \ge 2$. A slight additional modification will allow us to also handle the case $M = 1$.

\begin{lemma}\label{lem: encoding URPC 1-dim}
Let $G \act X$ be an action with URPC and let $L \subset G$ be finite. Then there exists an open set $U \subset X$ such that
\[
\abs{Lx \cap \overline{U}} \le 2 \quad \mbox{for any} \ x \in X
\]
and a function $\phi : X \to [0, 1]$ supported on $U$ such that for any finite $K \subset G$ and $\eps > 0$ the map $I_\phi$ encodes a URPC cover for $K$ and $\eps$.
\end{lemma}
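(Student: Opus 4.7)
The plan is to reduce the one-dimensional case to the two-dimensional one supplied by Lemma \ref{lem: encoding URPC 2-dim}, by packing the two coordinates $\phi_1,\phi_2$ into a single function via a translation. Given the finite set $L$, first fix any nontrivial $h \in G$ and set $L' = LL^{-1} \cup h^{-1}LL^{-1}h \cup \{e, h, h^2\}$. Since URPC implies the marker property (Lemma \ref{lem: URP implies marker}), there is an $L'$-free marker set $U_0$, and by Lemma \ref{lem: MS shrink} we may arrange that $\overline{U_0}$ is also $L'$-free. Apply Lemma \ref{lem: encoding URPC 2-dim} to $U_0$ to produce $\phi = (\phi_1, \phi_2) : X \to [0,1]^2$ supported on $U_0$, with open supports equal to $U_0$, $\phi_1 > \phi_2$ on $U_0$, and such that $I_\phi$ encodes a URPC cover for every finite $K \subset G$ and every $\eps > 0$.

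Since $U_0$ and $hU_0$ are disjoint (as $e, h \in L'$), the function $\psi : X \to [0,1]$ defined by
\[
\psi(x) = \phi_1(x) + \phi_2(h^{-1}x)
\]
is continuous and supported on $U := U_0 \cup hU_0$. For the support estimate, observe that $L'$-freeness of $\overline{U_0}$ and $LL^{-1} \subset L'$ imply that at most one element of $L$ translates a given point into $\overline{U_0}$, and similarly $h^{-1}LL^{-1}h \subset L'$ gives at most one translate into $h\overline{U_0}$; consequently $|Lx \cap \overline{U}| \le |Lx \cap \overline{U_0}| + |Lx \cap h\overline{U_0}| \le 2$ for every $x \in X$.

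The encoding claim reduces to showing that $I_\psi$ determines $I_\phi$ pointwise, since then $I_\psi$ encodes every set that $I_\phi$ encodes, and Lemma \ref{lem: encoding URPC 2-dim} provides the URPC cover for any $(K, \eps)$. Fix $x \in X$; from the sequence $(\psi(gx))_{g \in G}$ we recover $(\phi_1(gx), \phi_2(gx))_{g \in G}$ by the rule: $gx \in U_0$ if and only if $\psi(gx) > 0$ and $\psi(hgx) > 0$, in which case $\phi_1(gx) = \psi(gx)$ and $\phi_2(gx) = \psi(hgx)$; otherwise $\phi(gx) = 0$. The key point is that if $gx \in hU_0$ then $hgx \in h^2 U_0$, and $h^2 U_0$ is disjoint from $U_0 \cup hU_0$ because $L'$-freeness with $\{e, h, h^2\} \subset L'$ forces the translates $U_0, hU_0, h^2U_0$ to be pairwise disjoint; hence $\psi(hgx) = 0$ in this case, and the rule correctly distinguishes the two copies of the support.

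The main obstacle is precisely this unambiguous separation of $U_0$ and $hU_0$ from $I_\psi$: the decoder must be able to tell, at each position $g$, which of the two shifted copies contributed the value $\psi(gx)$, which forces the requirement $h^2 \in L'$ in addition to the natural conditions $LL^{-1} \cup h^{-1}LL^{-1}h \subset L'$ coming from the $|Lx \cap \overline{U}| \le 2$ bound. Once $L'$ is chosen to include all of these, the construction above produces the required pair $(U, \psi)$ and the proof is complete.
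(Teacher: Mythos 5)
Your proof is correct and takes essentially the same route as the paper: apply Lemma \ref{lem: encoding URPC 2-dim} to a sufficiently free marker set $U_0$ and interleave the two coordinates of the resulting $(\phi_1,\phi_2)$ on the disjoint union $U_0 \sqcup hU_0$ via a translate. The only real difference is the decoding rule --- the paper recovers $U_0$ from the comparison $\phi(x) > \phi(gx) > 0$ (using $\phi_1 > \phi_2$), whereas you use positivity of $\psi$ at both $x$ and $hx$ together with the pairwise disjointness of $U_0, hU_0, h^2U_0$ guaranteed by putting $\{e,h,h^2\}$ into $L'$; your variant is sound and does not even need the $\phi_1 > \phi_2$ property.
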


\begin{proof}
Without loss of generality, assume that $L$ contains the identity, and pick an arbitrary element $g \in G$. Set $L^\prime = L^{-1} \cup L^{-1}g$ and let $U^\prime$ be an $L^\prime$-free marker set. As usual, we can additionally assume that $\overline{U}$ is also $L^\prime$-free. By Lemma \ref{lem: encoding URPC 2-dim}, there is a function $\phi^\prime = (\phi_1, \phi_2) : X \to [0, 1]^2$ that encodes some URPC cover for any $K$ and $\eps$ and such that the open support of both $\phi_1$ and $\phi_2$ is equal to $U^\prime$ and $\phi_1 > \phi_2$ on $U^\prime$. Set $U = U^\prime \sqcup gU^\prime$ and define a function $\phi : X \to [0, 1]$ by
\[
\phi(x) = 
\begin{cases}
\phi_1(x) & \mbox{if} \ x \in U^\prime, \\
\phi_2(g^{-1}x) & \mbox{if} \ x \in gU^\prime, \\
0 & \mbox{if} \ x \in X \setminus \brak{U^\prime \sqcup gU^\prime}.
\end{cases}
\]
Clearly, we have
\[
\abs{Lx \cap \overline{U}} \le 2 \quad \mbox{for any} \ x \in X.
\]
Next, note that $x \in U^\prime$ if and only if $\phi(x) > \phi(gx) > 0$, hence $I_\phi$ encodes the set $U^\prime$. It follows that, by knowing $I_\phi$, it also possible to reconstruct the original function $\phi^\prime$ using the formula
\[
\phi^\prime(x) =
\begin{cases}
(0, 0) & \mbox{if} \ x \not \in U^\prime, \\
(\phi(x), \phi(gx)) & \mbox{if} \ x \in U^\prime.
\end{cases}
\]
Thus, $I_\phi$ encodes a URPC cover for any $K$ and $\eps$.
\end{proof}

\subsection{Mean dimension and shift embedding}

\begin{definition}
Let $\cU$ be an a collection of open subsets of a compact space $X$. The \emph{order} $\ord(\cU)$ of this collection is defined to be
\[
\ord(\cU) = \sup_{x \in X}\abs{\{U \in \cU \mid x \in U\}}.
\]
If $\cU$ is in fact an open cover of $X$, then we define the \emph{dimension} of the cover $\cU$ to be
\[
\dim(\cU) = \inf_{\cV}\ord(\cV),
\]
where $\cV$ ranges over all refinements of $\cU$. Note that the \emph{covering dimension} of the space $X$ is defined to be
\[
\dim(X) = \sup_{\cU}\dim(\cU),
\]
where $\cU$ ranges over all open covers of $X$.
\end{definition}

\begin{definition}
Let $G \act X$ be an action of an amenable group on a compact space and let $M \in \Rb_{\ge 0}$ be a nonnegative number. We say that $G \act X$ has \emph{mean dimension less than $M$} and write
\[
\mdim(G \act X) < M
\]
if there exists some $\gamma > 0$ so that for every open cover $\cU$ there exists a finite set $F \subset G$ and $\eps > 0$ such that for every $(F, \eps)$-invariant set $S \subset G$ we have
\[
\dim\brak{\bigvee_{g \in S}g^{-1}\cU} < (M-\gamma)\abs{S}.
\]
We define
\[
\mdim(G \act X) = \inf\brak{\{M \in \Rb_{\ge 0} \mid \mdim(G \act X) < M\}}.
\]
If the set above is empty we set $\mdim(G \act X) = \infty$.
\end{definition}

\begin{definition}
Let $\cU$ be an open cover of the space $X$ and let $\{\phi_U\}_{U \in \cU}$ be an associated partition of unity. We say that $\{\phi_U\}_{U \in \cU}$ is \emph{honest} if for any $U \in \cU$ we have 
\[
\overset{\circ}{\supp}(\phi_U) = U,
\]
where $\overset{\circ}{\supp}$ denotes the open support.

It is a basic exercise to show that any finite open cover has an associated honest partition of unity.
\end{definition}

\begin{remark}
Given a cover $\cU$ of the space $X$, the \emph{nerve complex} of $\cU$ is the simplicial complex with vertices $\{v_U\}_{U \in \cU}$ such that $v_{U_0}, v_{U_1}, \ldots, v_{U_k}$ span a simplex of dimension $k$ if and only if 
\[
U_0 \cap U_1 \cap \ldots \cap U_k \ne \emptyset.
\]
A partition of unity $\{\phi_U\}_{U \in \cU}$ then allows one to construct a simplicial approximation of the space $X$ by sending
\[
x \mapsto \sum_{U \in \cU}\phi_U(x)v_U
\]
in the nerve complex. If the partition of unity is honest, this simplicial approximation is \emph{essential}.

The language of simplicial approximations is heavily used in many prior works on mean dimension (for instance, \cite{GutLinTsu16}, \cite{GutQiaTsu19}). In this paper, however, we deliberately aim to avoid invoking simplicial approximations (since, in our opinion, it reduces the clarity of the argument) and formulate everything in terms of covers and partitions of unity. 
\end{remark}

We need the following easy lemma. The proof is left as an exercise to the reader.

\begin{lemma}\label{lem: honest partition}
Let $\cU$ be a finite open cover of the space, let $\cV$ be a finite collection of open sets, and let $\eps > 0$. Then there exists an honest partition of unity $\{\phi_U\}_{U \in \cU \sqcup \cV}$ for the cover $\cU \sqcup \cV$ such that
\[
\norm{\sum_{U \in \cV}\phi_U}< \eps.
\]
\end{lemma}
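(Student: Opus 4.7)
The plan is to exploit the fact (recalled just before the lemma) that the cover $\cU$ already admits an honest partition of unity, and then to dilute it by small bump functions supported on the extra sets in $\cV$. The construction proceeds in three steps. First, fix an honest partition of unity $\{\psi_U\}_{U \in \cU}$ for $\cU$. Second, for each $V \in \cV$, choose a continuous function $\eta_V : X \to [0, 1]$ with $\overset{\circ}{\supp}(\eta_V) = V$; since $X$ is metrizable this is routine, for instance taking $\eta_V(x) = \min\{1, \rho(x, X \setminus V)\}$ (with the convention $\rho(\cdot, \emptyset) = +\infty$). Third, set
\[
\phi_V = \frac{\eps}{2(\abs{\cV}+1)} \eta_V, \qquad s = \sum_{V \in \cV} \phi_V, \qquad \phi_U = (1-s) \psi_U,
\]
where the scaling guarantees $\norm{s} < \eps/2 < 1$.

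I would then check the three required properties. For the partition of unity condition, $\sum_{W \in \cU \sqcup \cV} \phi_W(x) = (1-s(x))\sum_{U \in \cU}\psi_U(x) + s(x) = 1$. For honesty: $\overset{\circ}{\supp}(\phi_V) = \overset{\circ}{\supp}(\eta_V) = V$ by construction, while for $U \in \cU$ the factor $1-s$ is strictly positive on all of $X$, so $\overset{\circ}{\supp}(\phi_U) = \overset{\circ}{\supp}(\psi_U) = U$ by honesty of the original partition. For the norm bound, $\norm{\sum_{V \in \cV} \phi_V} = \norm{s} < \eps/2 < \eps$. There is no genuine obstacle in this argument: the lemma is essentially a bookkeeping observation, and the only nontrivial input is the standard fact that in a metric space every open set arises as the open support of some continuous $[0,1]$-valued function.
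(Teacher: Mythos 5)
Your argument is correct, and since the paper leaves this lemma as an exercise there is no official proof to compare against; the dilution construction you give (rescale the new bump functions to have small sum $s$ and multiply the old honest partition by $1-s$) is exactly the natural route. One small slip: your claim that ``the scaling guarantees $\norm{s} < \eps/2 < 1$'' silently assumes $\eps < 2$; for large $\eps$ the factor $\eps/(2(\abs{\cV}+1))$ can exceed $1$, so $1-s$ could become negative and $\phi_U$ would fail to be nonnegative. This is harmless --- the conclusion for a smaller $\eps$ implies it for a larger one, so you may assume $\eps \le 1$ from the outset --- but the reduction should be stated. All other steps check out: $\{\phi_V > 0\} = \{\eta_V > 0\} = V$ since the scalar is positive, $\{\phi_U > 0\} = \{\psi_U > 0\} = U$ since $1-s$ is strictly positive everywhere, the functions sum to $(1-s)+s = 1$, and the required norm bound holds.
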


\begin{definition}
Let $N \in \Nb$ and let $I \subset \{1, 2, \ldots, N\}$. We naturally identify the cube $[0, 1]^N$ with $[0, 1]^{\{1, 2, \ldots, N\}}$ and define the usual projection $\pi_I : [0, 1]^N \to [0, 1]^I$ by
\[
\pi_I\brak{(x_i)_{i=1}^N} = (x_i)_{i \in I}.
\]

We always equip the cube $[0, 1]^I$ with the metric coming from the maximum norm
\[
\abs{(x_i)_{i \in I}}_\infty = \max_{i \in I}\abs{x_i}.
\]
When $Y$ is a topological space, the space of continuous function $F : Y \to [0, 1]^I$ is thus equipped with the metric coming from the uniform norm
\[
\norm{F} = \max_{y \in Y} \abs{F(y)}_\infty.
\]
\end{definition}

The next statement is a variation of \cite[Lemma 2.1]{GutTsu14}.

\begin{lemma}\label{lem: function perturbation}
Let $F : Y \to [0, 1]^N$ be a continuous function, let $\gamma \ge 0$ and  $\delta > 0$, let $\cU$ be a finite open cover of $Y$, and let $\cV$ be a finite collection of open sets such that
\begin{enumerate}
    \item $\ord(\cU) + \ord(\cV) \le (1-\gamma)N/2$,
    \item for any pair of points $x, y \in U$ in any $U \in \cU$ we have
    \[
    \abs{F(x) - F(y)}_\infty < \delta/2.
    \]
\end{enumerate}
Then there exists a function $\tilde{F} : Y \to [0, 1]^N$ such that 
\[
\norm{F - \tilde{F}} < \delta
\]
and for any subset $I \subset \{1, 2, \ldots, N\}$ with $\abs{I} > (1-\gamma)N$ the function $\pi_I(\tilde{F})$ encodes the cover $\cU \sqcup \cV$.
\end{lemma}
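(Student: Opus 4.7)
The plan is to recast $\tilde F$ as a single nerve-type embedding of $\cU \sqcup \cV$ and invoke the classical general-position embedding of simplicial complexes. Set $D := \ord(\cU) + \ord(\cV)$. Hypothesis (i) gives $2D \le (1-\gamma)N$, so any admissible $I \subset \{1, \ldots, N\}$ with $\abs{I} > (1-\gamma)N$ satisfies $\abs{I} \ge 2D + 1$, comfortably above the sharp threshold $2(D-1) + 1$ needed to embed a simplicial complex of dimension $\le D-1$ by general position.

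First, apply Lemma \ref{lem: honest partition} (with parameter $\delta/8$) to obtain an honest partition of unity $\{\phi_W\}_{W \in \cU \sqcup \cV}$ subordinate to $\cU \sqcup \cV$ (which is a cover since $\cU$ is) with $\norm{\sum_{W \in \cV} \phi_W} < \delta/8$. For each $U \in \cU$, pick a point $p_U \in U$ and set $c_U := F(p_U) \in [0,1]^N$; for each $W \in \cU \sqcup \cV$, pick $v_W \in [0, \delta/8]^N$ (genericity condition specified below). Define
\[
d_W := \begin{cases} c_W + v_W & \text{if } W \in \cU, \\ v_W & \text{if } W \in \cV, \end{cases}
\qquad
\tilde F(x) := \sum_{W \in \cU \sqcup \cV} \phi_W(x)\, d_W.
\]
Hypothesis (ii), applied to $x, p_U \in U$ whenever $\phi_U(x) > 0$ (which by honesty means $x \in U$), together with $\norm{\sum_{W \in \cV} \phi_W} < \delta/8$, yields $\norm{F - \sum_{U \in \cU} \phi_U\, c_U} < \delta/2 + \delta/8$, and the additional $v_W$-perturbation contributes at most $\delta/8$, so $\norm{\tilde F - F} < \delta$ as required (a cosmetic rescaling keeps the image in $[0,1]^N$).

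The key step is to choose the $v_W$'s so that for each of the finitely many $I \subset \{1, \ldots, N\}$ with $\abs{I} > (1-\gamma)N$, the projected vectors $\{\pi_I(d_W)\}_{W \in \cU \sqcup \cV}$ are in \emph{general position} in $\Rb^{\abs{I}}$: no $\abs{I}+1$ of them are affinely dependent. Since this is an open dense condition on $(v_W) \in ([0, \delta/8]^N)^{\abs{\cU \sqcup \cV}}$, a generic choice satisfies it simultaneously for all admissible $I$. Granting this, the classical general-position-embedding principle (for simplices of dimension $\le D-1$ in $\Rb^{\abs{I}}$ with $\abs{I} \ge 2D - 1$) implies that for any two distinct subsets $T, T' \subset \cU \sqcup \cV$ of size $\le D$, the relative interiors of the simplices spanned by $\{\pi_I(d_W) : W \in T\}$ and $\{\pi_I(d_W) : W \in T'\}$ are disjoint.

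The encoding claim then follows immediately. If $\pi_I(\tilde F(x)) = \pi_I(\tilde F(y))$, set $T_z := \{W : \phi_W(z) > 0\}$, which by honesty equals $\{W : z \in W\}$, so $\abs{T_z} \le D$. Each side of the equation is a convex combination with strictly positive weights of $\{\pi_I(d_W) : W \in T_z\}$, hence lies in the relative interior of the corresponding simplex; the disjointness forces $T_x = T_y$. Consequently $x \in W_0 \Leftrightarrow W_0 \in T_x \Leftrightarrow W_0 \in T_y \Leftrightarrow y \in W_0$ for every $W_0 \in \cU \sqcup \cV$, so $\pi_I \circ \tilde F$ encodes $\cU \sqcup \cV$. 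The main technical obstacle is arranging simultaneous general position for all admissible $I$, resolved by noting that the bad set in the $v_W$-parameter space is a finite union of positive-codimension algebraic varieties.
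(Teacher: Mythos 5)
Your proposal is correct and follows essentially the same route as the paper: an honest partition of unity from Lemma \ref{lem: honest partition}, replacement of $F$ by $\sum_W \phi_W\, d_W$ with generically perturbed vertex vectors, and the dimension count $\ord(\cU)+\ord(\cV)\le (1-\gamma)N/2 < \abs{I}$ to recover the support set $\{W : \phi_W(x)>0\}$ from $\pi_I(\tilde F(x))$. The only differences are cosmetic — you use affine general position and disjointness of relative interiors of faces where the paper uses linear independence of the projected vectors and uniqueness of short convex combinations, and your choice $v_W\in[0,\delta/8]^N$ needs the small fix you already flag to keep the image inside $[0,1]^N$.
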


\begin{proof}
Use Lemma \ref{lem: honest partition} to find an honest partition of unity $\{\phi_U\}_{U \in \cU \sqcup \cV}$ for the cover $\cU \sqcup \cV$ such that
\begin{equation}\label{eq: partition of unity estimate}
\norm{\sum_{U \in \cV}\phi_U}< \frac{\delta}{4\norm{F}}.
\end{equation}
For each open set $U \in \cU \sqcup \cV$ choose an arbitrary point $y_U \in U$. First, consider the function $F^\prime : Y \to [0, 1]^N$ given by
\[
F^\prime(y) = \sum_{U \in \cU \sqcup \cV}\phi_U(y)F(y_U).
\]
It follows from condition (ii) and \eqref{eq: partition of unity estimate} that
\[
\norm{F - F^\prime} < \delta/2 + 2\norm{F}\frac{\delta}{4\norm{F}} = \delta.
\]
Denote
\[
\kappa = \norm{F - F^\prime} - \delta.
\]

For each $U \in \cU \sqcup \cV$ choose a vector $\xi_U \in B_\kappa(F(y_U)) \cap [0, 1]^N$ such that for any subset $I \subset \{1, 2, \ldots, N\}$ with $\abs{I} > (1-\gamma)N$ and any collection $\{U_1, U_2, \ldots, U_m\} \subset \cU \sqcup \cV$, where $m < (1-\gamma)N$, the vectors $\pi_I(\xi_{U_1}), \pi_I(\xi_{U_2}), \ldots, \pi_I(\xi_{U_m})$ are linearly independent (this can be achieved, for instance, by choosing each vector $\xi_U$ randomly from the set $B_\kappa(F(y_U)) \cap [0, 1]^N$). This, in particular, implies that for a vector $\xi \in [0, 1]^I$ there is at most one way to write it as a convex combination of less than $(1-\gamma)N/2$ vectors from the set $\{\pi_I(\xi_U)\}_{U \in \cU \sqcup \cV}$. Define
\[
\tilde{F}(y) = \sum_{U \in \cU \sqcup \cV}\phi_U(y)\xi_U. 
\]
Using condition (i), it follows from the discussion above that if $I \subset \{1, 2, \ldots, N\}$ with $\abs{I} > (1-\gamma)N$ then the value $\pi_I(\tilde{F}(y))$ determines all the coefficients $\phi_U(y)$. Since the partition of unity is honest, it follows that $\pi_I(\tilde{F})$ encodes the cover $\cU \sqcup \cV$. Additionally, by our choices we have 
\[
\norm{F - \tilde{F}} \le \norm{F - F^\prime} + \norm{F^\prime - \tilde{F}} <  \norm{F - F^\prime} + \kappa = \delta.
\]
\end{proof}

\begin{definition}
Recall that a map $F : (X, \rho) \to Y$ is called an \emph{$\eps$-embedding} if $F(x) \ne F(y)$ whenever $\rho(x, y) > \eps$.
\end{definition}

\begin{remark}\label{rem: eps embed open}
Let $\sC$ be a nonempty closed subset (hence a Baire space) in the space of continuous functions $C(X, [0, 1]^M)$. It is an easy exercise to show that the sets
\[
A_\eps = \{f \in \sC \mid I_f \ \mbox{is an} \ \eps\mbox{-embedding}\} 
\]
are open. If one can additionally show that they are also dense in $\sC$ then $A_\eps$ are comeager and therefore
\[
\bigcap_{n=1}^\infty A_{1/n} \ne \emptyset.
\]
Clearly, for any function $f$ in this intersection the map $I_f$ is a $G$-equivariant embedding of $X$ into $\brak{[0, 1]^M}^G$.
\end{remark}

\begin{theorem}\label{thm: embedding}
If $G \act X$ has URPC and 
\[
\mdim(G \act X) < M/2
\]
then there exists a $G$-equivariant embedding of $X$ into $\brak{[0, 1]^M}^G$.
\end{theorem}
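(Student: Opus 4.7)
By Remark~\ref{rem: eps embed open}, it suffices to show that for every $\eps > 0$ the set $A_\eps$ of continuous $f : X \to [0,1]^M$ with $I_f$ an $\eps$-embedding is dense in $C(X, [0,1]^M)$. Fix $\eps, \delta > 0$ and $f_0 \in C(X, [0,1]^M)$; the goal is to produce $\tilde f$ with $\|\tilde f - f_0\|_\infty < \delta$ and $I_{\tilde f}$ an $\eps$-embedding. I present the argument for $M \ge 2$; the case $M = 1$ is parallel using Lemma~\ref{lem: encoding URPC 1-dim}. Choose a finite open cover $\cW$ of $X$ of mesh less than $\eps/4$ on which $f_0$ oscillates by less than $\delta/4$. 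By $\mdim(\GonX) < M/2$ there exist $\gamma > 0$, a finite $F \subset G$, and $\eps_1 > 0$ such that every $(F, \eps_1)$-invariant $S \subset G$ satisfies $\dim(\bigvee_{g \in S} g^{-1}\cW) < (M/2 - \gamma)|S|$.

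Apply Lemma~\ref{lem: encoding URPC 2-dim} with a marker set $U$ taken $L$-free for a finite $L$ to be chosen below, and scale the output $\phi = (\phi_1, \phi_2)$ so its image lies in $[0, \eta]^2$ with $\eta < \delta/4$. By construction $I_\phi$ encodes a URPC cover for any prescribed parameters; use this to fix an $(F, \eps_1)$-URPC witness $\cW_0 = \brak{\{S_i\}, \{V_i\}, \{g_j\}, \{U_j\}}$ whose URPC cover $C(\cW_0) = \{gV_i\}_{i, g \in S_i} \sqcup \{U_j\}_j$ is encoded by $I_\phi$, arranging via Remark~\ref{rem: URPC cover stable} and standard refinement that every element of $C(\cW_0)$ has diameter less than $\eps/4$ and $|S_i| \ge 3/(2\gamma)$ for each $i$. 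Take $L \supseteq \bigcup_i S_i S_i^{-1}$; then $L$-freeness of $U$ forces $|S_i y \cap U| \le 1$ for every $y \in V_i$. For each $i$ fix an open refinement $\cU_i^*$ of $\bigvee_{g \in S_i} g^{-1}\cW|_{V_i}$ with $\ord(\cU_i^*) < (M/2 - \gamma)|S_i|$.

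The core construction is tower-by-tower. After a mild shrink placing $f_0$ into $[\eta, 1]^M$ (a perturbation of size $< \delta/4$), apply Lemma~\ref{lem: function perturbation} for each $i$ to the tower map $F_i : V_i \to [0,1]^{M|S_i|}$, $F_i(y) = (f_0(gy))_{g \in S_i}$, with cover $\cU_i^*$, empty auxiliary collection, and lemma-parameter $\gamma_{\mathrm{lem}} = 3/(M|S_i|)$; the budget condition $\ord(\cU_i^*) \le M|S_i|/2 - 3/2$ holds thanks to $|S_i| \ge 3/(2\gamma)$. This produces $\tilde F_i$ close to $F_i$ such that $\pi_I \tilde F_i$ encodes $\cU_i^*$ for any $I \subset \{1, \ldots, M|S_i|\}$ with $|I| \ge M|S_i| - 2$. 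Define $\tilde f$ by $\tilde f(gy) = \tilde F_i(y)_g$ on each $gV_i$, $\tilde f = f_0$ on the remainder, and then overwrite on $U$ the first two coordinates of $\tilde f$ by $(\phi_1, \phi_2)$ (using standard bump transitions near $\partial U$ to maintain continuity). By $L$-freeness, at each $y \in V_i$ this overwrite changes at most two coordinates of $\tilde F_i(y)$, namely $(g, 1)$ and $(g, 2)$ for the at-most-one $g \in S_i$ with $gy \in U$.

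To verify $I_{\tilde f}$ is an $\eps$-embedding: the distinct value ranges $[0, \eta)$ versus $[\eta, 1]$ of the first two coordinates detect membership in $U$, so $I_{\tilde f}$ recovers $I_\phi$, hence $C(\cW_0)$ by Corollary~\ref{cor: encoding element recovers URPC}. For $x \ne y$ with $\rho(x, y) > \eps$: if $x, y$ lie in distinct elements of $C(\cW_0)$, $I_{\tilde f}$ separates them directly; if both lie in some $U_j$, the diameter bound contradicts $\rho(x, y) > \eps$; and if both lie in some $gV_i$, writing $x = g y_1, y = g y_2$ forces $y_1, y_2$ into distinct atoms of $\cU_i^*$ (otherwise $gy_1, gy_2$ would share a $\cW$-atom of diameter $< \eps/4$), and projecting $\tilde F_i(y_j)$ onto the complement of the (already-recovered) overwritten coordinates, which has size $\ge M|S_i| - 2$, yields the desired separation via Lemma~\ref{lem: function perturbation}, producing some $h \in S_i$ with $\tilde f(hy_1) \ne \tilde f(hy_2)$. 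The main obstacle is the simultaneous encoding of $C(\cW_0)$ (globally, via $\phi$) and each $\cU_i^*$ (locally, via the tower map) inside the single function $\tilde f$; this is resolved by combining the $L$-freeness of $U$ (keeping overwrites to $\le 2$ coordinates per point) with $|S_i|$ large enough for Lemma~\ref{lem: function perturbation} to afford a two-coordinate drop budget in the $M|S_i|$-dimensional tower cube.
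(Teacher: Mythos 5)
Your overall architecture (Baire category, a small marker set carrying an encoding function, tower-by-tower perturbation via Lemma \ref{lem: function perturbation}) matches the paper's, but three steps as written do not go through.

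First, the encoding-by-value-range device conflicts with continuity and with the density claim. You propose to detect $U$ by ``first coordinate in $[0,\eta)$ on $U$ versus in $[\eta,1]$ off $U$.'' Since $\phi_1$ has open support exactly $U$, it tends to $0$ at $\partial U$ from inside, while your function is $\ge \eta$ on $\partial U \subset X\setminus U$; no continuous function can do both, and any bump interpolation destroys the exact recoverability of $\phi$ (which is needed, since the URPC covers are encoded through the precise level sets of $\phi$). Relatedly, overwriting the first two coordinates of an \emph{arbitrary} $f_0$ on $U$ by $(\phi_1,\phi_2)\in[0,\eta]^2$ after shrinking $f_0$ into $[\eta,1]^M$ is a perturbation of size up to $1-\eta$, not $<\delta$, so you cannot prove density in all of $C(X,[0,1]^M)$. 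The paper resolves both problems at once by working in $[-1,1]^M$ and restricting the Baire argument to the closed subspace $\sC$ of functions equal to $(-\phi,0,\dots,0)$ on $U$ and nonnegative off $U$: continuity at $\partial U$ is automatic (both sides approach $0$), $\phi$ is exactly recoverable as $-\min(f_1,0)$ for every $f\in\sC$, and density need only be shown within $\sC$.

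Second, you cannot arrange that ``every element of $C(\cW_0)$ has diameter less than $\eps/4$.'' For the tower levels $gV_i$ this is essentially the small boundary property, which URPC does not give (the paper stresses exactly this distinction between URP and almost finiteness in measure), and an open base $V_i$ cannot be partitioned into small open pieces while keeping disjoint footprints. Consequently your disposal of the case where two $\eps$-separated points both lie in a remainder set $U_j$ has no support; this case is genuinely nontrivial because the $U_j$ may be large. The paper handles it by feeding the auxiliary collection $\cV_i=\bigsqcup_{j\in J_i} g_j(\cW|_{U_j})$ into Lemma \ref{lem: function perturbation} alongside $\cU_i$, using the URPC-witness bound $\ord(\cV_i)\le \ord(\cW)\,\alpha\abs{S_i}$ to keep the order budget below $(1-\gamma)M\abs{S_i}/2$ --- precisely the mechanism you discard by taking the auxiliary collection empty. (Two smaller slips: when comparing $\tilde F_i(y_1)$ and $\tilde F_i(y_2)$ the index set $I$ must avoid the unreliable coordinates of \emph{both} points, which can be four rather than two, and must also exclude the bump transition region $U^\tau\setminus U$; the paper's choice $I=I_x\cap I_y$ with the density bound $\abs{S_i x\cap U^\tau}<\gamma\abs{S_i}/2$ handles this.)
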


\begin{proof}
It will be slightly more convenient for us to prove the result with the cube $[0, 1]^M$ replaced by $[-1, 1]^M$. Choose a small enough $\gamma > 0$ so that 
\[
\mdim(G \act X) < (1-\gamma)M/2.
\]
Let $L \subset G$ be a finite subset containing the identity such that $\abs{L} > 4/\gamma$. By Lemma \ref{lem: encoding URPC 1-dim}, there exists an open set $U$ and a function $\phi : X \to [0, 1]$ supported on it such that $I_\phi$ encodes some URPC cover for any $K$ and $\eps$ and 
\[
\abs{Lx \cap \overline{U}} \le 2
\]
for any $x \in X$. Define a closed subset of functions
\[
\sC = \{f \in C\brak{X, [-1, 1]^M} \mid \left.f\right|_{U} = (-\phi, 0, \ldots, 0) \ \mbox{and} \ \left.f\right|_{X \setminus U} \ge 0)\}.
\]
We immediately observe that for any $f = (f_1, f_2, \ldots, f_M) \in \sC$ we have
\[
-\phi(x) = \min(f_1(x), 0)
\]
and therefore if $I_\phi$ encodes a certain URPC cover then so does $I_f$ for any $f \in \sC$. By Remark \ref{rem: eps embed open}, it is sufficient to prove that the set of $f \in \sC$ such that $I_f$ is an $\eps$-embedding is dense.

Let $f \in \sC$ and let $\eps, \delta > 0$. In the rest of the proof we will construct a function $\tilde{f} \in \sC$ such that $I_{\tilde{f}}$ is an $\eps$-embedding and 
\[
\norm{f - \tilde{f}} < \delta.
\]
First, let $\cW$ be a finite open cover of $X$ such that for any $W \in \cW$ we have that $\diam(W) < \eps$ and $\abs{f(x) - f(y)}_\infty < \delta/2$ for any pair of points $x, y \in W$. Choose a finite set $K \subset G$ and an $\alpha > 0$ such that for any $(K, \alpha)$-invariant set $S \subset G$ we have that $\abs{Sx \cap \overline{U}} < \gamma\abs{S}/2$ for any $x \in X$ and
\[
\dim\brak{\bigvee_{g \in S}g^{-1}\cW} < \brak{(1-\gamma)M/2 - \ord(\cW)\alpha}\abs{S}.
\]

We now need to prepare some data and some notation. Let $\brak{\{S_i\}_{i=1}^n, \{V_i\}_{i=1}^n, \{g_j\}_{j=1}^m, \{U_j\}_{j=1}^m}$ be a URPC witness for $K$ and $\alpha$ such that $I_\phi$ encodes the corresponding cover $\{gV_i \colon 1 \le i \le n, g \in S_i\} \sqcup \{U_j\}_{j=1}^m$. As usual, we may assume that the sets $g\overline{V_i}$, for $1 \le i \le n$ and $g \in S_i$, are pairwise disjoint. Choose $\tau > 0$ small enough so that 
\begin{enumerate}
    \item $gV_i^\tau$ are pairwise disjoint for $1 \le i \le n$ and $g \in S_i$ and
    \item $\abs{S_ix \cap U^\tau} < \gamma\abs{S_i}/2$ for any $x \in X$ and any $1 \le i \le n$.
\end{enumerate}
Let $\psi : X \to [0, 1]$ be a bump function such that $\psi \equiv 1$ on $\overline{U}$ and $\supp\psi \subset U^\tau$ and let $\psi_i : X \to [0, 1]$ for $1 \le i \le n$ be bump functions such that $\psi_i \equiv 1$ on $\overline{V_i}$ and $\supp\psi \subset V_i^\tau$. We remark that the functions $\{g\psi_i \mid 1 \le i \le n, g \in S_i\}$ have pairwise disjoint supports and therefore the collection 
\[
\{g\psi_i \mid 1 \le i \le n, g \in S_i\} \sqcup \{1 - \sum_{i=1}^n\sum_{g \in S_i}g\psi_i\}
\]
is a partition of unity associated to the cover
\[
\{gV_i^\tau \mid 1 \le i \le n, g \in S_i\} \sqcup \{X \setminus \bigsqcup_{i=1}^nS_i\overline{V_i}\}.
\]
Finally, define a function $f^+ = (f^+_1, f^+_2, \ldots, f^+_M) : X \to [0, 1]^M$ by setting
\[
f^+_k(x) = \max(f_k(x), 0).
\]
Equivalently, $f^+ = f$ on $X \setminus U$ and $f^+ \equiv (0, 0, \ldots, 0)$ on $U$.

Next, fix an index $i \in \{1, 2, \ldots, n\}$. By our choices, there is a finite open cover $\cU_i$ of the set $V_i^\tau$ that refines $\left.\brak{\bigvee_{g \in S_i}g^{-1}\cW}\right|_{V_i^\tau}$ and such that
\[
\ord(\cU_i) < \brak{(1-\gamma)M/2 - \ord(\cW)\alpha}\abs{S_i}.
\]
By definition of a URPC witness, any set of the form $g_jU_j$, where $1 \le j \le m$ is a subset of one of the bases $V_1, V_2, \ldots, V_n$. Let
\[
J_i = \{1 \le j \le m \mid g_jU_j \subset V_i\}.
\]
Now define a collection of open subsets of $V_i$ by setting
\[
\cV_i = \bigsqcup_{j \in J_i} g_j\brak{\left.\cW\right|_{U_j}}. 
\]
Again, by definition of a URPC witness for $K$ and $\alpha$, we have that 
\[
\ord(\cV_i) \le \ord(\cW)\alpha\abs{S_i}
\]
and hence
\[
\ord(\cU_i) + \ord(\cV_i) < (1-\gamma)M\abs{S_i}/2.
\]
Consider a continuous function $F_i = (F_i^g)_{g \in S_i} : V_i^\tau \to \brak{[0, 1]^M}^{S_i}$ given by
\[
F_i^g(x) = f^+(gx).
\]
It is not difficult to see that $F_i$, $\cU_i$, and $\cV_i$ satisfy the assumptions of Lemma \ref{lem: function perturbation} and therefore there exists a function $\tilde{F}_i = (\tilde{F_i}^g)_{g \in S_i}: V_i^\tau \to \brak{[0, 1]^M}^{S_i}$ such that
\[
\norm{F_i - \tilde{F_i}} < \delta
\]
and $\pi_I(\tilde{F_i})$ encodes the cover $\cU_i \sqcup \cV_i$ for any subset $I \subset S_i$ with $\abs{I} > (1-\gamma)\abs{S_i}$ (here $\pi_I$ is a natural projection from $\brak{[0, 1]^M}^{S_i}$ to $\brak{[0, 1]^M}^I$). Now, first define
\[
\tilde{f}^+(x) = \brak{1 - \sum_{i=1}^n\sum_{g \in S_i}g\psi_i(x)}f^+(x) + \sum_{i=1}^n\sum_{g \in S_i}g\psi_i(x)\tilde{F_i}^g(g^{-1}x).
\]
By construction, $\tilde{f}^+$ is a function from $X$ to $[0, 1]^M$ such that
\[
\norm{f^+ - \tilde{f}^+} < \delta.
\]
Finally, let
\[
\tilde{f}(x) = \psi(x)f(x) + (1 -\psi(x))\tilde{f}^+(x).
\]
We have that $\tilde{f} = f = (-\phi, 0, \ldots, 0)$ on $U$, hence $
\tilde{f} \in \sC$, and that
\[
\norm{f - \tilde{f}} < \delta.
\]

It remains to show that $I_{\tilde{f}}$ is an $\eps$-embedding. Indeed, let $x, y \in X$ such that $\rho(x, y)>\eps$. Since $\tilde{f} \in \sC$, the map $I_{\tilde{f}}$ encodes the URPC cover $\{gV_i \colon 1 \le i \le n, g \in S_i\} \sqcup \{U_j\}_{j=1}^m$. Thus, if $x \in U$ and $y \not \in U$ for some element $U$ of the cover then $I_{\tilde{f}}(x) \ne I_{\tilde{f}}(y)$ and we are done. Suppose now that both $x$ and $y$ belong to the same element $U$ in the URPC cover. It is either of the form $gV_i$ for $g \in S_i$, in which case we set $h = g^{-1}$, or of the form $U_j$, in which case we set $h = g_j$. In either case, both $hx$ and $hy$ are elements in some base $V_i$. Since all elements in $\cW$ have diameter at most $\eps$, there is at least one element of $\cW$ that separates $x$ and $y$. It follows that there is then at least one element of $\cU_i \sqcup \cV_i$ that separates $hx$ and $hy$. Next, define
\[
I_x = \{g \in S_i \mid ghx \not \in U^\tau\} \quad \mbox{and} \quad I_y = \{g \in S_i \mid ghy \not \in U^\tau\}.
\]
By construction, $\abs{I_x}, \abs{I_y} > (1 - \gamma/2)\abs{S_i}$ and therefore the set $I = I_x \cap I_y$ satisfies
\[
\abs{I} > (1-\gamma)\abs{S_i}.
\]
We have 
\[
(\tilde{f}(ghx))_{g \in I} = \pi_I(\tilde{F}_i)(hx) \ne \pi_I(\tilde{F}_i)(hy) = (\tilde{f}(ghy))_{g \in I}
\]
since $\pi_I(\tilde{F}_i)$ encodes the cover $\cU_i \sqcup \cV_i$. Hence $I_{\tilde{f}}(hx) \ne I_{\tilde{f}}(hy)$ and therefore $I_{\tilde{f}}(x) \ne I_{\tilde{f}}(y)$ by equivariance.

\end{proof}

\section{Shift embeddability for amenable actions of nonamenable groups}
\label{sec: nonamen shift embed}

\begin{definition}\label{def: topologically amenable}
An action $\GonX$ of a discrete countable group on a compact metrizable space is said to be \emph{topologically amenable} if there is a sequence of maps $\mu_n : X \to \mathrm{Prob}(G)$ such that
\[
\sup_{x \in X}\norm{g\mu_n(x) - \mu_n(gx)}_1 \to 0
\]
for any $g \in G$, where $\mathrm{Prob}(G)$ is the space of probability measures on $G$ equipped with the $l^1$-norm.
\end{definition}

The following is \cite[Theorem 3.9]{GarGefKraNar23}\footnote{There it was proved for $D = \{e, h\}$. The proof for arbitrary $D$ is exactly the same.}.

\begin{theorem}\label{thm: GGKN}
Let $F_2 < G$ be a nonamenable group containing a free subgroup on two generators. Let $\GonX$ be a topologically amenable action. Then for any finite set $D \subset F_2$ there are open sets $V_1, V_2, V_3 \subset X$ and elements $g_1, g_2, g_3 \in X$ such that 
\begin{itemize}
    \item the sets $g\overline{V_i}$, where $g \in D$ and $i =1, 2, 3$, are pairwise disjoint and
    \item the sets $g_iV_i$, $i = 1, 2, 3$, cover $X$.
\end{itemize}
\end{theorem}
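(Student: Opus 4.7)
The plan is to combine a combinatorial paradoxical decomposition of $F_2$ with the approximately invariant means supplied by topological amenability, producing the three open sets $V_i$ as super-level sets of carefully designed continuous functions on $X$.

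First, I would establish a purely combinatorial fact inside $F_2$: for the given finite $D \subset F_2$, there exist subsets $T_1, T_2, T_3 \subset F_2$ and elements $g_1, g_2, g_3 \in F_2$ such that (a) the collection $\{dT_i : d \in D,\ 1 \le i \le 3\}$ consists of pairwise disjoint subsets of $F_2$, and (b) every element of $F_2$ lies in at least \emph{two} of the three sets $g_1T_1, g_2T_2, g_3T_3$, i.e.\ we have a double cover. Such a configuration exists because $F_2$ has four cone-directions in its Cayley tree but we need only three pieces, leaving enough paradoxical room. Concretely, I would take $T_1 \subset W_a$, $T_2 \subset W_{a^{-1}}$, and $T_3 \subset W_b \cup W_{b^{-1}}$ (where $W_x$ denotes words starting with $x$), thinning them out by a parity-on-word-length condition to ensure self-disjointness $T_i \cap dT_i = \emptyset$, and then arranging the double-cover via the identities $F_2 = W_a \sqcup aW_{a^{-1}} = W_b \sqcup bW_{b^{-1}}$, which supply the redundant coverings.

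Next I would transfer the picture to $X$. Topological amenability of $G \curvearrowright X$ restricts to the $F_2$-action (amenability is inherited by subgroupoids), so after replacing the given $\mu_n$ by their restrictions to $F_2$ and renormalizing, one obtains maps $\mu_n : X \to \mathrm{Prob}(F_2)$ with $\|h\mu_n(x) - \mu_n(hx)\|_1 \to 0$ uniformly for each $h \in F_2$. Define continuous functions $\phi_i(x) := \mu_n(x)(T_i)$. Approximate equivariance then yields $\phi_i(g^{-1}x) \approx \mu_n(x)(gT_i)$ for any fixed $g$. Set $V_i := \{x : \phi_i(x) > 1/2\}$. Property (a) forces
\[
\sum_{d \in D,\, i}\phi_i(d^{-1}x) \ \approx \ \sum_{d,i}\mu_n(x)(dT_i) \ \le \ \mu_n(x)(F_2) = 1,
\]
so at most one of these values can exceed $1/2$; hence the sets $\{dV_i\}_{d \in D,\, i}$ are pairwise disjoint. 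Property (b) gives $\sum_i \mu_n(x)(g_iT_i) \ge 2$, so at least one $\phi_i(g_i^{-1}x) \ge 2/3 > 1/2$, producing $X = g_1V_1 \cup g_2V_2 \cup g_3V_3$.

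To upgrade disjointness of the $V_i$ to disjointness of their closures, I would replace the sharp threshold $1/2$ by $1/2+\tau$ in the definition of $V_i$ for a small $\tau > 0$; by continuity of $\phi_i$ and compactness of $X$, the closure $\overline{V_i}$ is contained in $\{\phi_i \ge 1/2+\tau\}$, and both the disjointness and covering properties persist because the margin $\tau$ is absorbed by the $o(1)$ error in the equivariance estimate (after choosing $n$ sufficiently large). The main obstacle is Step 1 — the explicit combinatorial arrangement in $F_2$ of three $D$-separated pieces admitting a double cover by three translates. The existence ultimately rests on the paradoxicality of $F_2$, but the quantitative bookkeeping (three pieces, multiplicity exactly two, arbitrary $D$) requires careful choice of the cones and the parity filter; once this is done, the dynamical half of the proof is essentially soft.
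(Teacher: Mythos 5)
Your overall architecture --- a combinatorial configuration inside $F_2$ consisting of three pairwise disjoint, $D$-separated sets $T_1,T_2,T_3$ whose translates $g_iT_i$ cover $F_2$ with multiplicity two, transported to $X$ via approximately invariant probability-measure-valued maps and read off through level sets at a threshold $1/2+\tau$ --- is sound, and it is essentially the strategy of the cited source \cite[Theorem 3.9]{GarGefKraNar23} (the present paper does not reprove the statement, it only records that the argument for $D=\{e,h\}$ works verbatim for general $D$). The dynamical half of your argument is correct modulo one repair: ``restricting $\mu_n(x)$ to $F_2$ and renormalizing'' does not obviously preserve approximate equivariance, since $\mu_n(x)(F_2)$ may be arbitrarily small and the normalization blows up the error; the standard fix is to push $\mu_n(x)$ forward along an $F_2$-equivariant retraction $G\to F_2$ obtained from a choice of representatives of the right cosets $F_2g$, which produces genuine elements of $\mathrm{Prob}(F_2)$ with the same equivariance defect. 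Your threshold bookkeeping ($1/2+\tau$ for disjointness of closures, $2/3$ from the double cover for the covering) is fine.

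The genuine gap is in Step 1, exactly where you flag it. First observe that a $2$-cover of $F_2$ by three sets $g_1T_1,g_2T_2,g_3T_3$ is the same as saying that the three complements $F_2\setminus g_iT_i$ are pairwise disjoint, i.e.\ \emph{every pair} $g_iT_i\cup g_jT_j$ must already equal $F_2$. The full cones do achieve this (take $T_1=W_a$, $g_1=a^{-1}$, $T_2=W_{a^{-1}}$, $g_2=a$, $T_3=W_b\cup W_{b^{-1}}$, $g_3=b^{-1}$; the complements of the translates are contained in $W_{a^{-1}}$, $W_a$, $W_{b^{-1}}$ respectively), but these $T_i$ are nowhere near $D$-separated, and the proposed parity/length filter fails on both counts: left multiplication by a fixed $d$ can preserve word length exactly (e.g.\ $d=a^{-1}b$ sends $b^{-1}a^{-1}u$ to $a^{-2}u$), so no congruence condition on $|w|$ forces $T\cap dT=\emptyset$ for general $D$; and deleting elements from $T_i$ shrinks $g_iT_i$, enlarges its complement, and destroys the pairwise-covering property --- an issue the proposal never addresses. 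The missing idea is to shrink each cone to a \emph{deeper shadow} rather than to thin it: for a reduced word $\gamma$ ending in the letter $c$ one has $\gamma^{-1}W_\gamma=F_2\setminus W_{c^{-1}}$, so replacing the cone by $W_{\gamma_i}$ and the translating element by $\gamma_i^{-1}$ leaves the complement of the translate \emph{unchanged}, no matter how long $\gamma_i$ is. Choosing $\gamma_1,\gamma_2,\gamma_3$ ending in $a$, $a^{-1}$, $b$ and with long mutually incompatible prefixes (e.g.\ $\gamma_1=b^{2N}a$, $\gamma_2=b^{2N}a^{-1}$, $\gamma_3=a^{2N}b$ with $N>\max_{g\in D^{-1}D}|g|$) makes the translates $dW_{\gamma_i}$, $d\in D$, $1\le i\le3$, pairwise disjoint (a routine prefix-incomparability check on the reduced forms of $d^{-1}d'\gamma_j$), while the three complements remain $W_{a^{-1}}$, $W_a$, $W_{b^{-1}}$. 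With $T_i=W_{\gamma_i}$ and $g_i=\gamma_i^{-1}$ your Steps 2 and 3 then go through as written.
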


\begin{corollary}
Suppose $F_2 < G$ contains a free subgroup on two generators and let $\GonX$ be an action that satisfies the conclusion of Theorem \ref{thm: GGKN}. Then for any finite set $D \subset F_2$ there is a $D$-free marker set (in other words, the restricted action $F_2 \act X$ has the marker property).
\end{corollary}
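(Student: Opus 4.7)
The plan is to take $U = V_1 \cup V_2 \cup V_3$ directly, where $V_1, V_2, V_3$ come from applying Theorem \ref{thm: GGKN} with the prescribed set $D$ (enlarged, without loss of generality, to contain the identity). This set $U$ is open, and the two required properties—$D$-freeness and being a marker set—should both fall out of the hypotheses with essentially no work.

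For $D$-freeness, I would argue: for any two distinct elements $g, g' \in D$,
\[
gU \cap g'U = \bigcup_{i, j \in \{1, 2, 3\}} gV_i \cap g'V_j,
\]
and each term on the right is empty because the sets $\{hV_k\}_{h \in D,\, 1 \le k \le 3}$ are assumed to have pairwise disjoint closures (so in particular pairwise disjoint). Hence the translates $\{gU\}_{g \in D}$ are pairwise disjoint.

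For the marker condition, the hypothesis that $g_1V_1 \cup g_2V_2 \cup g_3V_3 = X$ shows immediately that $GU \supseteq \{g_1, g_2, g_3\} U = X$, so $U$ is a marker set; in fact, provided the elements $g_1, g_2, g_3$ supplied by Theorem \ref{thm: GGKN} lie in $F_2$ (which is implicit in the cited result), we even have $F_2 U = X$, giving the parenthetical stronger statement that the restriction $F_2 \act X$ has the marker property.

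I do not expect any serious obstacle here—the entire content of the corollary is packaged into Theorem \ref{thm: GGKN}, and the proof amounts to unwinding the definitions. The only minor caveat is ensuring that $D$ (as used when invoking Theorem \ref{thm: GGKN}) contains the identity so that the translates $eV_i = V_i$ themselves are part of the disjointness conclusion; this is harmless since the property we need depends only on $D$ being contained in some finite set to which the theorem applies.
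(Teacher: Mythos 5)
Your proposal is correct and matches the intended argument: the paper states this corollary without proof precisely because taking $U = V_1 \cup V_2 \cup V_3$ and unwinding Theorem \ref{thm: GGKN} is the whole content. Your caveats (enlarging $D$ to contain the identity, and the elements $g_1, g_2, g_3$ lying in $F_2$ so that $F_2 U = X$) are the right ones to flag, though note that $D$-freeness itself only concerns translates by elements of $D$, so the identity need not actually be adjoined.
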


An argument similar to the one in Subsection \ref{ssec: encoding URPC} shows the following.

\begin{lemma}\label{lem: encoding nonamen}
Suppose $F_2 < G$ contains a free subgroup on two generators, let $\GonX$ be an action that satisfies the conclusion of Theorem \ref{thm: GGKN}, and let $L \subset F_2$ be finite. Then there exists an open set $U \subset X$ and a function $\phi : X \to [0, 1]$ supported on $U$ such that
\begin{itemize}
    \item $Lx \cap \overline{U} \le 2$ for any $x \in X$ and
    \item for any finite set $D \subset F_2$ the map $I_\phi$ encodes sets $V_1, V_2, V_3$ that satisfy the conclusion of Theorem \ref{thm: GGKN}.
\end{itemize}
\end{lemma}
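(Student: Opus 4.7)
The plan is to mirror the two-step strategy from the amenable case: first establish a two-dimensional encoding in the spirit of Lemma \ref{lem: encoding URPC 2-dim}, then collapse to one dimension using the same splitting trick employed in Lemma \ref{lem: encoding URPC 1-dim}. The role played by URPC witnesses in the amenable proof will be filled here by the triples $(V_1, V_2, V_3)$ furnished by Theorem \ref{thm: GGKN}.

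For the two-dimensional version I would fix an exhaustion $D_1 \subset D_2 \subset \ldots$ of $F_2$ by finite sets and build inductively a decreasing sequence of marker sets $U' = W^0 \supset W^1 \supset W^2 \supset \ldots$ together with finite collections $\{W^m_k\}_{k \in J_m}$ of open sets whose closures are pairwise disjoint subsets of $W^{m-1}$ and such that any continuous function encoding $\{W^m_k\}_{k \in J_m}$ also encodes some triple $V_1^m, V_2^m, V_3^m$ satisfying the conclusion of Theorem \ref{thm: GGKN} for $D_m$. For the inductive step, I would first choose a finite $M_m \subset F_2$ with $M_m W^{m-1} = X$ (which exists because $W^{m-1}$ is an $F_2$-marker set) and then apply Theorem \ref{thm: GGKN} with the input $D = D_m \cup M_m^{-1}$. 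This produces sets $V_1^m, V_2^m, V_3^m$ whose disjoint union $V_1^m \sqcup V_2^m \sqcup V_3^m$ is $M_m^{-1}$-free, so Lemma \ref{lem: L-free subeq} yields $[V_1^m] + [V_2^m] + [V_3^m] \prec [W^{m-1}]$. Combined with Remark \ref{rem: subeq cover properties} and Remark \ref{rem: image subeq}, together with the obvious stability of the conclusion of Theorem \ref{thm: GGKN} under slightly shrinking each $V_i^m$ (an elementary compactness argument), this produces the desired collection $\{W^m_k\}_{k \in J_m}$ inside $W^{m-1}$. Setting $W^m = \bigcup_k W^m_k$ closes the induction, and the construction of $\phi' = (\phi_1, \phi_2) : X \to [0, 1]^2$ is then identical to the tree argument in the proof of Lemma \ref{lem: encoding URPC 2-dim}: one builds a rooted tree whose vertices reflect the nesting of the $W^m_k$ and embeds it into $[0, 1]^2$ via a map $\tau = (\tau_1, \tau_2)$ satisfying $\tau_1 > \tau_2$ off the root, producing $\phi'$ supported on $U'$ with $\phi_1 > \phi_2$ throughout $U'$.

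To reduce from two dimensions to one I would apply the trick from Lemma \ref{lem: encoding URPC 1-dim} verbatim: assuming $e \in L$, pick any $g \in F_2$, set $L' = L^{-1} \cup L^{-1}g \subset F_2$, and carry out the two-dimensional construction above starting from an outer marker set $U'$ whose closure is $L'$-free. Set $U = U' \sqcup gU'$ and define $\phi(x) = \phi_1(x)$ on $U'$, $\phi(x) = \phi_2(g^{-1}x)$ on $gU'$, and $\phi \equiv 0$ elsewhere. The $L'$-freeness of $\overline{U'}$ directly yields $\abs{Lx \cap \overline{U}} \le 2$, while the inequality $\phi_1 > \phi_2 > 0$ on $U'$ allows one to reconstruct $I_{\phi'}$ from $I_\phi$ (a point $x$ lies in $U'$ precisely when $\phi(x) > \phi(gx) > 0$), so the encoding of every triple $V_1^m, V_2^m, V_3^m$ is preserved.

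The main subtlety I anticipate is making sure that every translate used in the subequivalences and in the shrinking of marker sets lies in $F_2$, so that the freeness constraints remain compatible with what Theorem \ref{thm: GGKN} can supply. This is not a genuine obstacle, however: the marker sets produced from Theorem \ref{thm: GGKN} are $F_2$-marker sets, and Lemma \ref{lem: L-free subeq} together with the constructions in Remarks \ref{rem: subeq cover properties} and \ref{rem: image subeq} can be implemented entirely inside $F_2$ since that subgroup is countably infinite. Once this localization is in place, no step of the tree construction or the one-dimensional splitting relies on amenability, and the argument runs in complete parallel to the amenable case.
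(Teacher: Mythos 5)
Your proposal is correct and is exactly the argument the paper intends: the paper gives no written proof of this lemma, deferring to ``an argument similar to the one in Subsection 5.1,'' and your writeup carries out precisely that analogy, with the triples from Theorem \ref{thm: GGKN} replacing the URPC witnesses in the nested-tree construction of Lemma \ref{lem: encoding URPC 2-dim} and the splitting trick of Lemma \ref{lem: encoding URPC 1-dim} applied verbatim. Your inductive step (apply Theorem \ref{thm: GGKN} with $D = D_m \cup M_m^{-1}$ so that $V_1^m \sqcup V_2^m \sqcup V_3^m$ is $M_m^{-1}$-free and hence subequivalent to $W^{m-1}$ via Lemma \ref{lem: L-free subeq}) is the right way to close the induction, and your attention to keeping all translates inside $F_2$ is exactly the localization the paper's corollary to Theorem \ref{thm: GGKN} is there to supply.
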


We now prove the main theorem of this section. The proof is an easier version of Theorem \ref{thm: embedding}.

\begin{theorem}\label{thm: embedding nonamen}
Suppose $F_2 < G$ contains a free subgroup on two generators and let $\GonX$ be an action that satisfies the conclusion of Theorem \ref{thm: GGKN}. Then there is a $G$-equivariant embedding of $X$ into $[0, 1]^G$.
\end{theorem}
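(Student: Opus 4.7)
The plan is to imitate the proof of Theorem \ref{thm: embedding} with $M = 1$, using the three-tile cover supplied by Theorem \ref{thm: GGKN} in place of the URPC castle. First, apply Lemma \ref{lem: encoding nonamen} with a suitable finite $L \subset F_2$ to obtain an open set $U \subset X$ and a continuous function $\phi : X \to [0,1]$ supported on $U$ with $\abs{Lx \cap \overline{U}} \le 2$ and such that $I_\phi$ encodes witnesses to Theorem \ref{thm: GGKN} for every finite $D \subset F_2$. Define the closed subset
\[
\sC = \{f \in C(X, [-1,1]) \mid f|_U = -\phi \text{ and } f|_{X \setminus U} \ge 0\}.
\]
The identity $-\phi(x) = \min(f(x), 0)$, valid on all of $X$ for $f \in \sC$, shows that $I_f$ encodes the same sets as $I_\phi$. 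By Remark \ref{rem: eps embed open} applied to $\sC$ (with $M = 1$), it suffices to prove that the set of $f \in \sC$ such that $I_f$ is an $\eps$-embedding is dense in $\sC$, for every $\eps > 0$.

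Fix $f \in \sC$ and $\eps, \delta > 0$. Choose a finite open cover $\cW$ of $X$ by sets of diameter less than $\eps$ on each of which $f$ varies by less than $\delta/2$, and let a finite symmetric $D \subset F_2$ be taken of cardinality to be specified. Combining Theorem \ref{thm: GGKN} with Lemma \ref{lem: encoding nonamen}, there exist open sets $V_1, V_2, V_3 \subset X$ and elements $g_1, g_2, g_3 \in F_2$ such that $g\overline{V_i}$ for $g \in D$, $i \in \{1,2,3\}$ are pairwise disjoint, $\bigcup_i g_i V_i = X$, and $I_\phi$ (hence $I_f$) encodes the cover $\{g_i V_i\}_{i=1}^3$. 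Fix $\tau > 0$ small so that $g\overline{V_i^\tau}$ remain pairwise disjoint, put $f^+ = \max(f, 0)$, and for each $i$ define
\[
F_i : V_i^\tau \to [0,1]^D, \qquad F_i^g(x) = f^+(gx).
\]
Apply Lemma \ref{lem: function perturbation} with $F = F_i$, $\cV_i = g_i^{-1} \cW|_{V_i^\tau}$ (the collection whose encoding will separate translated pairs), and $\cU_i$ a refinement of $\bigvee_{g \in D} g^{-1} \cW|_{V_i^\tau}$ controlling the oscillation of $F_i$. Since no mean-dimension estimate has to be respected in the nonamenable setting, we are free to enlarge $\abs{D}$ so that $\ord(\cU_i) + \ord(\cV_i) \le (1-\gamma)\abs{D}/2$ for some fixed $\gamma > 0$; the lemma then produces $\tilde F_i$ within $\delta$ of $F_i$ that encodes $\cU_i \sqcup \cV_i$. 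Patch the $\tilde F_i$'s together using bump functions $\psi_i$ on $V_i^\tau$ equal to $1$ on $\overline{V_i}$, and interpolate with $f$ via a bump $\psi$ on $\overline{U}$, exactly as in the proof of Theorem \ref{thm: embedding}, to obtain $\tilde f \in \sC$ with $\norm{\tilde f - f} < \delta$.

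To verify that $I_{\tilde f}$ is an $\eps$-embedding, take $x, y \in X$ with $\rho(x, y) > \eps$. If some $g_i V_i$ separates them, we are done since $I_{\tilde f}$ encodes this cover. Otherwise both lie in a common $g_i V_i$; setting $h = g_i^{-1}$, we have $hx, hy \in V_i$, and some $W \in \cW$ contains exactly one of $x, y$, so that $hW \cap V_i \in \cV_i$ separates $hx$ from $hy$. Since $\tilde F_i$ encodes $\cV_i$, there exists $g \in D$ with $\tilde f(ghx) \ne \tilde f(ghy)$, and equivariance gives $I_{\tilde f}(x) \ne I_{\tilde f}(y)$. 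The main obstacle in this plan is ensuring that a cover $\cU_i$ refining $\bigvee_{g \in D} g^{-1} \cW|_{V_i^\tau}$ can indeed be chosen with order at most $(1-\gamma)\abs{D}/2 - \ord(\cV_i)$; this is the analogue of the mean-dimension estimate used in the amenable case, and it is precisely the place where the unrestricted freedom to enlarge $D$ in the nonamenable setting, unconstrained by any Følner-type condition, is essential.
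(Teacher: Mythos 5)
Your overall architecture (the space $\sC$ pinned to $-\phi$ on $U$, the Baire category reduction, Lemma \ref{lem: encoding nonamen}, Lemma \ref{lem: function perturbation}, the bump-function patching) matches the paper's proof. But there is a genuine gap at exactly the point you flag as "the main obstacle,'' and the resolution you propose does not work. You take $\cU_i$ to be a refinement of $\bigvee_{g \in D} g^{-1}\cW|_{V_i^\tau}$ and assert that, since no mean-dimension estimate is in force, one is "free to enlarge $\abs{D}$'' until $\ord(\cU_i) + \ord(\cV_i) \le (1-\gamma)\abs{D}/2$. Enlarging $D$ makes the join \emph{finer}, and without any hypothesis of finite mean dimension (Theorem \ref{thm: GGKN} assumes none, and $X$ may be infinite-dimensional) the order of the best refinement of $\bigvee_{g \in D} g^{-1}\cW$ can grow proportionally to $\abs{D}$ --- that growth rate is precisely what mean dimension measures. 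So the required inequality cannot be forced, and the application of Lemma \ref{lem: function perturbation} collapses. The paper's fix is to not refine the join at all: it takes $\cU_i = g_i^{-1}\bigl(\cW|_{g_iV_i}\bigr)$, a \emph{single} translate of $\cW$, so that $\ord(\cU_i) \le \ord(\cW)$ independently of $D$; since $\diam(W) < \eps$ for all $W \in \cW$, one translated element of $\cW$ already separates $g_i^{-1}x$ from $g_i^{-1}y$, which is all the encoding needs to accomplish. One then chooses $\abs{D} > 10\,\ord(\cW)$ \emph{afterwards}, and condition (i) of Lemma \ref{lem: function perturbation} holds trivially.

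A second, smaller omission: in your verification step you conclude from "$\tilde F_i$ encodes $\cV_i$'' that some $g \in D$ separates $\tilde f(ghx)$ from $\tilde f(ghy)$. But on the coordinates $g$ with $gg_i^{-1}x \in U^\tau$ the value of $\tilde f$ is overwritten by the interpolation with $-\phi$ (this is forced by membership in $\sC$), so those coordinates carry no information from $\tilde F_i$. One must restrict to $I = I_x \cap I_y$ where $I_x = \{g \in D \mid gg_i^{-1}x \notin U^\tau\}$, and invoke the conclusion of Lemma \ref{lem: function perturbation} for the projection $\pi_I$. This is the entire reason Lemma \ref{lem: encoding nonamen} provides the bound $\abs{Lx \cap \overline{U}} \le 2$ and why the paper chooses $D$ to be a disjoint union of translates of $L$: it guarantees $\abs{I} > \abs{D}/5 > 2\,\ord(\cU_i)$. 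Your write-up records the $\abs{Lx \cap \overline{U}} \le 2$ hypothesis but never uses it, which is a sign that this step is missing.
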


\begin{proof}
Once again, we will prove the statement with the cube $[0, 1]^G$ replaced by $[-1, 1]^G$. Let $L \subset F_2$ be any finite set with $\abs{L} > 5$. Let $U$ and $\phi$ be given by Lemma \ref{lem: encoding nonamen} and consider the function space
\[
\sC = \{f \in C(X, [-1, 1]) \mid \left.f\right|_U = -\phi \ \mbox{and} \ \left.f\right|_{X \setminus U} \ge 0\}.
\]
We claim that the set of functions $f$ such that $I_f$ is an $\eps$-embeddings is dense in $\sC$. Indeed, suppose $f \in \sC$ and $\eps, \delta > 0$ are given. We will construct a function $\tilde{f} \in \sC$ such that $I_{\tilde{f}}$ is an $\eps$-embedding and
\[
\norm{f - \tilde{f}} < \delta.
\]

To do that, let $\cW$ be a finite open cover consisting of sets with diameter less than $\eps$ and such that for any $W \in \cW$ and any pair of points $x, y \in W$ we have that
\[
\abs{f(x) - f(y)}_\infty < \delta/2.
\]
Choose a finite set $D \subset F_2$ of cardinality $\abs{D} > 10\ord(\cW)$ that is a disjoint union of translates of $L$. Then for any $x \in X$ we have
\[
\abs{Dx \setminus \overline{U}} > \frac{3}{5}\abs{D}.
\]
By our choice of $\phi$, there are open sets $V_1, V_2, V_3$ encoded by $I_\phi$ and $g_1, g_2, g_3 \in G$ that satisfy the conclusion of Theorem \ref{thm: GGKN} with the set $D$ chosen above. Choose $\tau > 0$ small enough so that 
\begin{enumerate}
    \item $\abs{Dx \setminus U^\tau} > 3\abs{D}/5$ for any $x \in X$ and
    \item the sets $gV_i^\tau$, $g \in D$, $i = 1, 2, 3$ are pairwise disjoint.
\end{enumerate}
Let $\psi : X \to [0, 1]$ be a bump function such that $\psi \equiv 1$ on $U$ and $\supp \psi \subset U^\tau$ and let $\psi_i$ for $i = 1, 2, 3$ be bump functions such that $\psi_i \equiv 1$ on $V_i$ and $\supp \psi_i \subset V_i^\tau$.

Define 
\[
f^+(x) = \max(f(x), 0)
\]
and for each $i = 1, 2, 3$ let $F_i = (F_i^g)_{g \in D} : V_i^\tau \to [0, 1]^D$ be defined by
\[
F_i^g(x) = f^+(gx).
\]
Define a cover $\cU_i$ of the set $V_i^\tau$ by
\[
\cU_i = g_i^{-1}\brak{\left.\cW\right|_{g_iV_i}}.
\]
Then $\ord(\cU_i) \le \ord(\cW)$, hence, by Lemma \ref{lem: function perturbation}, there is a function $\tilde{F} : V_i^\tau \to [0, 1]^D$ such that 
\[
\norm{F - \tilde{F}} < \delta
\]
and $\pi_I(\tilde{F})$ encodes $\cU_i$ for any subset $I \subset D$ with $\abs{I} > 2\ord(\cU_i)$. Set 
\[
\tilde{f}^+(x) = \brak{1 - \sum_{i=1}^3\sum_{g \in D}g\psi_i(x)}f^+(x) + \sum_{i=1}^3\sum_{g \in D}g\psi_i(x)\tilde{F_i}^g(g^{-1}x)
\]
and
\[
\tilde{f}(x) = \psi(x)f(x) + (1 -\psi(x))\tilde{f}^+(x).
\]
One can easily check that
\[
\norm{f - \tilde{f}} < \delta.
\]

Now, suppose that $x, y \in X$ and $\rho(x, y) > \eps$. Recall that $\{g_iV_i\}_{i=1}^3$ is a cover of the space encoded by $I_\phi$. Hence, if $x$ and $y$ belong to different elements of this cover then $I_\phi(x) \ne I_\phi(y)$ and therefore $I_{\tilde{f}}(x) \ne I_{\tilde{f}}(y)$. Assume now that they lie in the same element, say $g_iV_i$. Since $\cW$ was a cover with diameters smaller than $\eps$, these points lie in different elements of $\cW$ and hence $g_i^{-1}x$ and $g_i^{-1}y$ belong to a different elements in $\cU_i$. Let 
\[
I_x = \{g \in D \mid gg_i^{-1}x \not \in U^\tau\} \quad \mbox{and} \quad I_y = \{g \in D \mid gg_i^{-1}y \not \in U^\tau\}
\]
and define $I = I_x \cap I_y$. By our choices, $\abs{I} > \abs{D}/5 > 2\ord(\cU_i)$, hence $\pi_I(\tilde{F})$ encodes the cover $\cU_i$. It follows that
\[
(\tilde{f}(gg_i^{-1}x))_{g \in I} = \pi_I(\tilde{F}_i)(g_i^{-1}x) \ne \pi_I(\tilde{F}_i)(g_i^{-1}y) = (\tilde{f}(gg_i^{-1}y))_{g \in I}
\]
and thus 
\[
I_{\tilde{f}}(x) \ne I_{\tilde{f}}(y).
\]
\end{proof}

\end{document}